\newtheorem{lemma}{Lemma}[section]
\newtheorem{theorem}[lemma]{Theorem}
\newtheorem{proposition}[lemma]{Proposition}
\newtheorem{corollary}[lemma]{Corollary}
\newtheorem{definition}[lemma]{Definition}
\newtheorem{remark}[lemma]{Remark}
\newtheorem{example}[lemma]{Example}
\newtheorem{assumption}{Assumption}[section]
\newcommand{\filettata}{\mathbb}
\newcommand{\N}{\filettata{N}}
\newcommand{\Q}{\filettata{Q}}
\newcommand{\R}{\filettata{R}}
\newcommand{\Prob}{\filettata{P}}
\newcommand{\E}{\mathbb{E}}
\newcommand{\spl}{\begin{split}}
\newcommand{\lit}{\end{split}}
\newcommand{\cadlag}{c\`adl\`ag }
\newcommand{\tot}[1]{^{(#1)}}
\newcommand{\DD}{\Delta^n_i}
\newcommand{\JJ}{\Delta^n_j}
\newcommand{\si}{\sum_{i=1}^n}
\newcommand{\Var}{\text{Var}}
\newcommand{\qaz}{{(i-1)\Delta_n}}
\newcommand{\DDint}{\int_{(i-1) \Delta_n}^{i\Delta_n}}
\newcommand{\JJint}{\int_{(j-1) \Delta_n}^{j\Delta_n}}
\newcommand{\Et}{\E\left[}
\newcommand{\guno}{g^{(1)}(i\Delta_n-s)}
\newcommand{\gdue}{g^{(2)}(i\Delta_n-s)}
\newcommand{\deltag}[1]{\DD g^{(#1)}_s}
\newcommand{\BSS}{$\mathcal{BSS}$ }
\newcommand{\bss}{$\mathcal{BSS}$}
\newcommand{\leb}{\ensuremath{Leb}}
\renewcommand{\epsilon}{\varepsilon}
\renewcommand{\theta}{\vartheta}
\renewcommand{\phi}{\varphi}
\lbrace\begin{array}{@{}l@{}}}%
\DeclarePairedDelimiter{\abs}{\lvert}{\rvert}
\newcommand \norm[1]{\left\lVert #1 \right\rVert}
\DeclarePairedDelimiter\floor{\lfloor}{\rfloor}
\title{A weak law of large numbers for estimating the correlation in bivariate {B}rownian semistationary processes}
\author{Andrea Granelli\thanks{E-mail: \texttt{a.granelli12@imperial.ac.uk}} \and Almut E.~D.~Veraart\thanks{E-mail: \texttt{a.veraart@imperial.ac.uk}}}
\date{
\textit{Department of Mathematics, Imperial College London}\\
\textit{ 180 Queen's Gate, 
 London, SW7 2AZ, 
UK} \\ \ \\
\today}
\begin{document}
\maketitle
\begin{abstract}
This article presents various weak laws of large numbers for the so-called realised covariation of a bivariate stationary  stochastic process which is not a semimartingale. More precisely, we consider two cases: Bivariate moving average processes with stochastic correlation and   bivariate Brownian semistationary processes with stochastic correlation. In both cases, we can show that the (possibly scaled) realised covariation converges to the integrated (possibly volatility modulated) stochastic correlation process.
\end{abstract}

\noindent\emph{Keywords:} Weak law of large numbers, moving average process, Brownian semistationary process, stochastic correlation, multivariate setting, high frequency data. 
\\
\noindent\emph{MSC:} 60F05, 60F15, 60G15

\section{Introduction}
The aim of this article  is to construct consistent estimators for the (possibly)  stochastic correlation between two stochastic processes \emph{outside} the semimartingale framework. 

In the semimartingale case, the corresponding results are well-known. 
For instance,  for a bivariate semimartingale ${\bf Y}=(Y^{(1)}, Y^{(2)})^{\top}$  the quadratic covariation denoted by $[Y^{(1)},Y^{(2)}]$ exists and can be approximated by 
the so-called realised covariation. More precisely, for $n\in\mathbb{N}$ we write $\Delta_n=n^{-1}$; then  we have for $t\geq 0$ that 
\begin{align*}
\sum_{i=1}^{\lfloor  n t \rfloor}\Delta_i^n Y^{(1)}\Delta_i^n Y^{(2)}\stackrel{u.c.p.}{\to} [Y^{(1)},Y^{(2)}]_t, \quad \text{ where }
\Delta_i^n Y^{(j)}=Y^{(j)}_{i\Delta_n}-Y^{(j)}_{(i-1)\Delta_n}, j\in \{1,2\},
\end{align*}
where the convergence is uniform on compacts in probability (u.c.p.) as $n\to \infty$. 
We know that as soon as we drop the assumption that ${\bf Y}$ is a semimartingale, its quadratic covariation does not necessarily exist anymore. 
Hence we would like to answer the question whether  a weak law of large numbers can be formulated for a possibly scaled version of the realised covariation 
$$
\sum_{i=1}^{\lfloor n t\rfloor}\Delta_i^n Y^{(1)}\Delta_i^n Y^{(2)}
$$
when ${\bf Y}$ is \emph{not} a semimartingale. We tackle this question in a semi-parametric setting: First of all, we define the class of bivariate moving average processes with stochastic correlation and later extend this class to bivariate Brownian semistationary processes with stochastic correlation. In these cases, we are able to show that, under appropriate assumptions,  the (possibly) scaled realised covariation can indeed be used  to estimate correlation in a non-semimartingale setting.

While our results are interesting from a purely theoretical point of view, we also believe that they are relevant for various applications. For instance, empirical work in  modelling of turbulence suggests that non-semimartingale models, and Brownian semistationary processes in particular, compare favourably to other alternatives, see e.g.~\cite{barndorff2009Brownian, MarSch15, BNBV2017Book}. Also, when it comes to modelling of financial asset prices, we remark that the classical theory has heavily relied on the semimartingale framework, but it has been shown that non-semimartingale models do not necessarily lead to the presence of arbitrage.  More precisely, in the presence of small proportional transaction costs, 
  \cite{GRS2008} showed that if a non-semimartingale has conditional full support, then it does not lead to so-called free lunches. This property is indeed satisfied by a Brownian semistationary processes, see \cite{pakkanen2011brownian}. Also, in the case of modelling energy prices the semimartingale assumption can be relaxed and (multivariate) Brownian semistationary processes have been used in this context, see e.g.~\cite{BNBV2013Spot,veraart2014modelling}. 

 Our work has been motivated by a variety of recent articles which consider weak laws of large numbers and central limit theorems for Gaussian processes, see \cite{GL1989, barndorff2009power, Corcuera2012New, barndorff2008bipower}, for Brownian semistationary processes, see \cite{barndorff2009Brownian, barndorff2011multipower}, and for L\'{e}vy-driven 
processes, see \cite{BLP2014, BasseOConnorHeinrichPodolskij2016}.
However, we remark that all these articles consider a univariate setting, whereas our article is to the best of our knowledge the first one to consider the multivariate case. This  enables us to construct for the first time a consistent estimator for stochastic correlation outside the semimartingale framework. 

The remainder of this article is structured as follows.  Section \ref{S2} introduces the notation and  defines the main objects of interest. In particular, it formulates the assumptions which ensure that we are outside the semimartingale framework. 
Section \ref{S3} contains the main contributions of the article by presenting three weak laws of large numbers for the (possibly scaled) realised covariation.
Since the proofs of our results are rather technical, we conclude in  
Section \ref{S4} and relegate the proofs to 
Section \ref{S5}. Finally, 
Section \ref{A} contains some useful  background material.

\section{The setting}\label{S2}
Throughout this article we denote by $\left(\Omega, \mathscr F,\mathscr F_t, \Prob\right)$  a filtered, complete probability space and by $\mathcal B(\R)$  the class of Borel subsets of $\R$. We will consider a finite time horizon $[0, T]$ for some $T>0$. 
Let us first recall the definition of a Brownian measure. 
\begin{definition}[Brownian measure]
An $\mathscr F_t$-adapted Brownian measure $W\colon \Omega\times\mathcal B(\R)\to \R$ is a Gaussian stochastic measure such that, if $A\in \mathcal B(\R)$ with $\E[(W(A))^2]<\infty$, then $W(A) \sim N(0,\leb(A))$,
where $\leb$ is the Lebesgue measure.
Moreover, if $A\subseteq [t,+\infty)$, then $W(A)$ is independent of $\mathscr F_t$.
\end{definition}
An introduction to constructing stochastic integrals against such measures can be found in \cite{walsh1986introduction}.

We will assume that $\left(\Omega, \mathscr F,\mathscr F_t, \Prob\right)$ supports two  independent $\mathscr F_t$-Brownian measures $W\tot1, \tilde W$ on $\R$ and will now define a bivariate moving average process with stochastic correlation.
\begin{definition}[Bivariate moving average process with stochastic correlation]
\label{otioti}
Consider two independent Brownian measures $W\tot1$ and $\tilde W$ adapted to $\mathscr F_t$ and 
two nonnegative deterministic functions $g\tot1, g\tot2 \in L^2((0,\infty))$ which are continuous on $\mathbb{R}\setminus\{0\}$. 
Let  $\rho$ be a \cadlag stochastic process, defined on the whole real line, with paths lying in $[-1,+1]$ a.s. and
independent of $\mathscr F_t$.
Define
\[
\begin{split}
&Y_t\tot1:= \int_{-\infty}^t g\tot 1(t-s)  \,dW\tot1_s,\\
& Y_t^{(2)}:=\int_{-\infty}^t g\tot 2(t-s) \rho_s \,dW\tot1_s+\int_{-\infty}^t g\tot2(t-s) \sqrt{1-\rho_s^2}\,d\tilde W_s.
\end{split} 
\]
Then the vector process $(\mathbf Y_t)_{t\geq 0}=(Y^{(1)}_t,Y^{(2)}_t)^{\top}_{t\geq 0}$
is called a bivariate moving average process with stochastic correlation.
\end{definition}

We call $\boldsymbol Y$ defined as above a bivariate moving average process with stochastic correlation, since we will formally write $dW\tot2_t:=\rho_sdW\tot1_t+\sqrt{1-\rho^2_t}d\tilde W_t$ and we can then write 
\[
Y^{(2)}_t=\int_{-\infty}^t  g^{(2)}(t-s)\,dW^{(2)}_s.
\]

Note that for $t>0$, $W\tot2 ([0,t]):=\int_0^t\rho_s\,dW\tot1_s+\int_0^t\sqrt{1-\rho_s^2}\,d\tilde W_s$ is a standard Brownian motion by L\'evy's characterisation theorem.
Hence $Y\tot 1$ and $Y\tot 2$ appear as two univariate moving average processes which feature stochastic dependence. 


In what follows we will mostly refer to the Brownian measures $W\tot1$ and $\tilde W$ as processes, by a slight abuse of notation.
An \emph{increment} $W_t-W_s$ will simply be the (Gaussian) random variable $W(\omega,(s,t])$.


\subsection{Integrated correlation and realised covariation}
Our object of interest is the integrated  stochastic correlation coefficient given by
$\int_0^t\rho_s ds$.
It is well known that in the case when ${\bf Y}$ is a semimartingale, then the quadratic covariation of ${\bf Y}$ is given by
\begin{align}\label{QC}
[Y^{(1)},Y^{(2)}]_t = g^{(1)}(0^+)g^{(2)}(0^+)\int_0^t\rho_s ds.
\end{align}
Our aim is to estimate (the right hand side of) equation \eqref{QC} consistently given high  frequency observations.
To this end consider the following setting. 
 Suppose  that we
 sample our processes discretely along successive partitions of $[0,T]$. A partition $\Pi_n$ of $[0,T]$ will be a collection of times $0=t_0<\dots<t_i<t_{i+1}<\dots<t_n=T$, where, for simplicity, we assume that the partition is equally spaced. The mesh of the partition will therefore be $\Delta_n=\frac 1n$. Hence when $n\to \infty$, $\Delta_n \to 0$ and we are in the setting of the so-called \emph{infill asymptotics}.

We denote the increments of  $Y^{(j)}$ by $\DD Y^{(j)}:=Y_{i\Delta_n}^{(j)}-Y_{(i-1)\Delta_n}^{(j)}$, for $j=1,2$, and find that
\begin{align}
\begin{split}
\DD Y^{(j)} &= \int_{-\infty}^{(i-1)\Delta_n} \left(g^{(j)}\left(i\Delta_n-s\right)-g^{(j)}\left(\qaz-s\right)\right)\,dW^{(j)}_s\\
&\qquad + \int_{(i-1)\Delta_n}^{i\Delta_n} g^{(j)}(i\Delta_n-s)\,dW^{(j)}_s.
\end{split}
\end{align}
The \emph{realised covariation} is defined as
\begin{align*}
\sum_{i=1}^{\lfloor nt \rfloor} \DD Y\tot1 \DD Y \tot2,
\end{align*}
for $n\geq 1, t\in [0,T]$. 
In the case when ${\bf Y}$ is a semimartingale, we know from  \cite{protter2005stochastic}(Theorem 23), that 
\begin{align*}
\sum_{i=1}^{\lfloor nt \rfloor} \DD Y\tot1 \DD Y \tot2 \overset{\text{u.c.p.}}{\rightarrow} [Y^{(1)},Y^{(2)}]_t, \quad \text{ as } n\to \infty,
\end{align*}
where the convergence is uniform on compacts in probability (u.c.p.), see Section \ref{Sectucp} for details on u.c.p.~convergence.

Since the asymptotic theory for realised covariation is well known in the semimartingale framework,  we are exclusively interested in the asymptotic behaviour of the realised covariation when ${\bf Y}$ is \emph{not} a semimartingale. In that case, we have no guarantee that, as $n$ tends to infinity, the realised covariation tends to a finite limit.
In the following we will prove
 convergence of the (possibly scaled) realised covariation and identify the limiting process.

\subsection{(Non-) semimartingale conditions}
The question of whether or not $Y^{(j)}$ for $j=1,2$ is a semimartingale hinges on the properties of the functions $g^{(j)}$.
To simplify the exposition, let us suppress the superscripts in the following and focus on the process
$Y_t=\int_{-\infty}^tg(t-s)dW_s$ in this subsection.  
In order to establish whether $Y$ belongs to the semimartingale class, we need to be precise about the filtration we are using.
There are three possible filtrations that can be considered: 1)
The filtration $\left(\mathscr F_t^{Y}\right)_{t\geq 0}$, the natural filtration of $Y$, i.e.~the smallest filtration to which $Y$ is adapted;
2) the filtration $\left(\mathscr F_t^{Y,\infty}\right)_{t\geq 0}$, such that $\mathscr F^{Y,\infty}_t:=\sigma\{Y_s  ,s\in(-\infty,t]\}$, i.e.~the filtration generated by the history of $Y$;
3) the filtration $\left(\mathscr F_t^{W,\infty}\right)_{t\geq0}$ which is the smallest filtration with respect to which $W$ is an adapted Brownian measure.

A discussion on the conditions to impose to ensure that $Y$ is a semimartingale in these different filtrations is contained in \cite{bassegaussian}, \cite{cheridito2004gaussian}. We will work with the condition that $Y$ is a semimartingale in the $\mathscr F_t^{W,\infty}$ filtration. This is the most restrictive of the three (see  \cite{bassegaussian}).
For this setting, there exists a classical theorem due to \cite{knight1992foundations} that states the following:
\begin{theorem}[Knight]
The process $(Y_t)_{t\geq 0}$ is an $\mathscr F_t^{W,\infty}$-semimartingale if and only if there exist $h \in L^2(\R)$ and $\alpha\in \R$ such that:
\[
g(t)=\alpha+\int_0^t h(s)\,ds.
\]
\end{theorem}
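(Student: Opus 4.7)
I would prove the equivalence by handling each implication separately, with the ``only if'' direction being the substantive one.

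\emph{Sufficiency.} Suppose $g(t) = \alpha + \int_0^t h(u)\,du$ with $h\in L^2(\mathbb{R}_+)$. For $s\leq t$ this gives $g(t-s) = \alpha + \int_s^t h(v-s)\,dv$; the first step is to substitute into the Volterra representation of $Y_t$ and apply a stochastic Fubini theorem to exchange $dW_s$ with $dv$, whose hypotheses are satisfied because $h \in L^2$. The outcome is the decomposition
\[
Y_t - Y_0 \;=\; \alpha\,(W_t-W_0) \;+\; \int_0^t Z_v\,dv, \qquad Z_v \,:=\, \int_{-\infty}^v h(v-s)\,dW_s,
\]
where $Z$ is a stationary Gaussian process of variance $\|h\|_{L^2}^2$. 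The first summand is a scaled Brownian motion, hence a continuous local martingale; the second is the Lebesgue integral of a locally integrable process, hence absolutely continuous. Thus $Y$ is an $\mathscr{F}_t^{W,\infty}$-semimartingale.

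\emph{Necessity.} Assume conversely that $Y$ is an $\mathscr{F}_t^{W,\infty}$-semimartingale, with canonical decomposition $Y_t - Y_0 = M_t + A_t$. Because the filtration is Brownian, the martingale representation theorem yields $M_t = \int_0^t \phi_s\,dW_s$ for some predictable $\phi$. My plan is to split the Volterra representation as
\[
Y_t - Y_0 \;=\; \int_0^t g(t-s)\,dW_s \;+\; \int_{-\infty}^0 [g(t-s)-g(-s)]\,dW_s,
\]
observe that the ``history'' integral is $\mathscr{F}_0^{W,\infty}$-measurable for each $t$ and so contributes only to the finite-variation part $A$, and then match the martingale and drift components against the canonical decomposition. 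Uniqueness of the latter, combined with the It\^o isometry, forces $\phi_s \equiv g(0^+) =: \alpha$ and identifies $A$ as the time-integral of a Gaussian Volterra process of the form $\int_{-\infty}^v h(v-s)\,dW_s$. Back-substituting then yields $g(t) = \alpha + \int_0^t h(u)\,du$ with $h \in L^2$, as required.

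\emph{Main obstacle.} The hard step lies within the necessity direction: passing from the abstract existence of a finite-variation component $A$ to the concrete conclusion that $g$ is absolutely continuous with an $L^2$-derivative. Pointwise differentiation does not suffice; one must leverage the $L^2$-isometry of the Brownian measure together with the rigidity of Gaussian processes in order to convert the pathwise finite-variation statement for $A$ into an analytic regularity statement for the deterministic kernel $g$. Since the result is quoted from \cite{knight1992foundations}, in practice the fine details of this functional-analytic argument would be imported from the cited reference rather than reconstructed here.
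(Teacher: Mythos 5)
The paper offers no proof of this statement to compare against: it is quoted as a classical result and attributed to \cite{knight1992foundations} (see also \cite{bassegaussian}, \cite{cheridito2004gaussian}). Judged on its own merits, your sufficiency direction is essentially correct: writing $g(t-s)=\alpha+\int_s^t h(v-s)\,dv$, splitting the Volterra integral at $0$ and invoking stochastic Fubini (legitimate here, since $\int_{-\infty}^v h^2(v-s)\,ds=\|h\|_{L^2}^2<\infty$) yields $Y_t-Y_0=\alpha W_t+\int_0^t Z_v\,dv$ with $Z_v=\int_{-\infty}^v h(v-s)\,dW_s$, which is exactly the decomposition displayed in Theorem 2.4 of the paper specialised to $\sigma\equiv 1$.

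The necessity direction, however, has a genuine gap. The step asserting that the history term $t\mapsto\int_{-\infty}^0\left[g(t-s)-g(-s)\right]dW_s$ ``contributes only to the finite-variation part $A$'' because each of its values is $\mathscr F_0^{W,\infty}$-measurable is unjustified: measurability with respect to the initial $\sigma$-field for every fixed $t$ says nothing about path regularity in $t$, and for kernels that are not absolutely continuous (e.g.\ the Gamma kernel with $\delta\in(-\frac12,\frac12)\setminus\{0\}$) this very process has paths of infinite variation --- which is precisely why $Y$ then fails to be a semimartingale. So you cannot simply match this term against $A$ in the canonical decomposition; likewise the claim that the It\^o isometry ``forces $\phi_s\equiv g(0^+)$'' and that $A$ must be the time-integral of a Gaussian Volterra process is assuming, not deriving, the conclusion. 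Converting the hypothesis that $Y$ admits \emph{some} decomposition into a local martingale plus a finite-variation process into the analytic statement that $g$ is absolutely continuous with derivative in $L^2$ is the entire content of Knight's theorem, and your sketch explicitly defers it to \cite{knight1992foundations}. What you have, then, is a correct proof of the easy implication plus a citation for the hard one --- acceptable in the same spirit as the paper, which also states the result without proof, but not a self-contained proof of the equivalence.
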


Next let us study some sufficient conditions for a more general class of stochastic processes to be a semimartingale: 
\cite{barndorff2009Brownian} extended the moving average process to allow for stochastic volatility and defined the so-called Brownian semistationary (\bss) process by
\begin{equation}
\label{definition}
X_t=\int_{-\infty}^t g(t-s)\,\sigma_s dW_s,
\end{equation}
where $W$ is an $\mathscr F_t$-adapted Brownian measure, $\sigma$ is \cadlag and $\mathscr F_t$-adapted, $g \colon \R \to \R$ is a deterministic function, continuous in $\R \setminus \{0\}$, with $g(t)=0$ if $ t\leq 0$ and $g\in L^2((0,\infty))$. Also it is assumed that $\int_{-\infty}^t g^2(t-s)\sigma_s^2\,ds<\infty$ a.s.~so that a.s.~we have $Y_t<\infty$ for all $t\geq 0$.
\cite{barndorff2009Brownian} gave the 
following sufficient conditions for \BSS process $X$ to be a semimartingale:
\begin{theorem}\label{SCforSM}
Under the assumptions that
\begin{enumerate}
\item[(i)] $g$ is absolutely continuous and $g'\in L^2((0,\infty)),$
\item[(ii)] $\lim_{x\to 0^+} g(x)=:g(0^+)<\infty,$
\item[(iii)] The process $g'(-\cdot)\sigma_\cdot$ is square integrable,
\end{enumerate}
then $X_t$ defined as in \eqref{definition} is an $\mathscr F^{W,\infty}_t$-semimartingale.
In this case $X_t$ admits the decomposition:
\[
X_t=g(0^+)W_t+\int_0^t\,dl \left[\int_{-\infty}^l g'(l-s)\sigma_s\,dW_s\right].
\]
\end{theorem}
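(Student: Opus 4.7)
The plan is to derive the claimed decomposition directly and then note that a local martingale plus a finite variation process is a semimartingale by definition. The starting point is the absolute continuity assumption (i), together with the existence of $g(0^+)$ from (ii). These combine to give, for every $s<t$,
\[
g(t-s)=g(0^+)+\int_0^{t-s}g'(y)\,dy=g(0^+)+\int_s^t g'(l-s)\,dl,
\]
after the substitution $y=l-s$. The idea is to plug this into the defining integral of $X_t$ and then swap the order of Wiener and Lebesgue integrations.

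Concretely, I would split the range $(-\infty,t]$ into $(-\infty,0]$ and $(0,t]$. On the first piece, writing the difference from $X_0$ gives $g(t-s)-g(-s)=\int_0^t g'(l-s)\,dl$; on the second piece the representation above yields $g(t-s)=g(0^+)+\int_s^t g'(l-s)\,dl$. Substituting and collecting terms, one obtains
\[
X_t-X_0=g(0^+)\int_0^t\sigma_s\,dW_s+\int_{-\infty}^0\!\!\int_0^t g'(l-s)\,dl\,\sigma_s\,dW_s+\int_0^t\!\!\int_s^t g'(l-s)\,dl\,\sigma_s\,dW_s.
\]
A stochastic Fubini argument (e.g.\ Walsh-type, or Protter Ch.~IV) then permits the interchange of $dl$ and $\sigma_s\,dW_s$, which fuses the last two terms into $\int_0^t\left[\int_{-\infty}^l g'(l-s)\sigma_s\,dW_s\right]dl$. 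This delivers the stated decomposition, with a local martingale part and an absolutely continuous (hence finite-variation) drift.

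The main obstacle is to justify the applicability of stochastic Fubini. Specifically, one needs to show that the integrand $(l,s)\mapsto g'(l-s)\sigma_s\mathbbm{1}_{\{s<l\}}$ satisfies an $L^2(dl\otimes d\Prob\otimes ds)$-type bound, so that the iterated stochastic integral
$\int_0^t\int_{-\infty}^l(g'(l-s))^2\sigma_s^2\,ds\,dl<\infty$ holds (a.s.), and that $\int_{-\infty}^l g'(l-s)\sigma_s\,dW_s$ is well-defined for Lebesgue-a.e.\ $l\in[0,t]$. Both are delivered by assumption (iii), which states that $g'(-\cdot)\sigma_\cdot$ is square integrable; combined with (i) this also yields that the drift process $l\mapsto\int_{-\infty}^l g'(l-s)\sigma_s\,dW_s$ is well-defined and a.s.\ locally integrable on $[0,T]$, so the $dl$-integral indeed produces a finite-variation process. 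Once the interchange is justified, identifying the martingale and finite-variation parts is immediate, and the \BSS semimartingale property follows.
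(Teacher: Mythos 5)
The paper itself offers no proof of this theorem: it is quoted as a known result from Barndorff-Nielsen and Schmiegel (2009), so there is nothing internal to compare your argument against. On its own merits, your derivation is the standard proof of that result and is essentially sound: absolute continuity together with $g'\in L^2((0,\infty))$ (hence $g'\in L^1$ on bounded intervals) justifies $g(x)=g(0^+)+\int_0^x g'(y)\,dy$, splitting $(-\infty,t]$ at $0$ is the right bookkeeping, and the stochastic Fubini interchange is indeed the crux, with assumption (iii) supplying exactly the integrability needed for the inner integrals $\int_{-\infty}^l g'(l-s)\sigma_s\,dW_s$ to be well defined for a.e.\ $l$ and for the interchange (possibly after localisation). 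Two small points are worth making explicit. First, what your computation actually produces is
\[
X_t-X_0=g(0^+)\int_0^t\sigma_s\,dW_s+\int_0^t\Bigl[\int_{-\infty}^l g'(l-s)\sigma_s\,dW_s\Bigr]dl,
\]
i.e.\ the Brownian term carries the volatility and the identity concerns the increment $X_t-X_0$; the display in the statement, with $g(0^+)W_t$, should be read as shorthand for this (your version is the correct one for general $\sigma$, and the semimartingale conclusion is unaffected). Second, to conclude the $\mathscr F^{W,\infty}_t$-semimartingale property you should record that both terms are adapted to $\mathscr F^{W,\infty}_t$ (immediate, since $\sigma$ is adapted), that $\int_0^t\sigma_s^2\,ds<\infty$ a.s.\ so the martingale part is a genuine local martingale (this follows from the \cadlag assumption on $\sigma$, not from (iii)), and that $l\mapsto\int_{-\infty}^l g'(l-s)\sigma_s\,dW_s$ has a.s.\ locally integrable paths so the drift really is of finite variation; under (iii) this follows by a short localisation or Fubini--Tonelli argument and deserves one explicit line rather than being folded into the Fubini step.
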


We will now reintroduce the superscripts in our notation and formulate conditions which ensure that the bivariate moving average process ${\bf Y}$ with stochastic correlation is \emph{not} a semimartingale. 
To this end, we will relax the first two assumptions in Theorem \ref{SCforSM} since  both  assumptions are necessary for $Y^{(j)}$ to belong to the semimartingale class (see \cite{bassegaussian}) for $j=1,2$.

\begin{assumption}
\label{squareassumm}
For $j\in\{1,2\}$, we assume that $g\tot j\colon\R\to \R^+$ are nonnegative functions and continuous, except possibly at $x=0$. Also, $g\tot j(x)=0$ for $x<0$ and  $g\tot j \in L^2\left((0,+\infty\right))$.
We further ask that $g\tot j$ be differentiable everywhere with derivative $\left(g\tot j\right)'\in L^2((b^{(j)},\infty))$ for some $b^{(j)}>0$ and $\left((g\tot j)'\right)^2$ non-increasing in $[b^{(j)},\infty)$. 
\end{assumption}
To simplify the notation in some of the proofs we set $b=\max\{b^{(1)},b^{(2)}\}$, then  $\left(g\tot j\right)'\in L^2((b,\infty))$ and $\left((g\tot j)'\right)^2$ is non-increasing in $[b,\infty)$ for $j=1,2$.

Note that we are not assuming that $\left(g\tot j\right)'\in L^2((0,\infty))$ in order to exclude the semimartingale case. In particular, we must have that, for all $\epsilon >0$, $\sup_{x\in (0,\epsilon)} \left|\left(g\tot j\right)' (x)\right| = \infty.$

\begin{remark}
\label{marvellous} Under 
Assumption \ref{squareassumm} we can deduce from  the mean value theorem  that there exists an $\eta\in [s,s+\Delta_n]$ such that 
\[
g\tot j (s+\Delta_n)-g^{(j)}(s)= \Delta_n g\tot j\,'(\eta).
\]
 Then Assumption \ref{squareassumm} implies that, for $s\in [b,\infty)$, we have
\[
\abs{g\tot j (s+\Delta_n)-g\tot j(s)}\leq \Delta_n \abs{g\tot j\,'(s)}.
\]
Hence, we can  derive the bound:
\[
\int_b^{\infty} \left(g\tot j (s+\Delta_n)-g\tot j(s)\right)^2\,ds\leq \Delta_n^2\int_b^\infty \left(g\tot j\,'(s)\right)^2\,ds =K\Delta_n^2,
\]
for some constant $K>0$, since $\left(g\tot j\right)'\in L^2((b,\infty))$.
\end{remark}

We will impose an additional assumption for which 
 we recall here the definition of a slowly varying function.
\begin{definition}[Slowly and regularly varying function]
A measurable function $L\colon (0,\infty) \to (0,\infty)$ is called \emph{slowly varying at infinity} if, for all $\lambda >0$ we have that 
$\lim_{x\to\infty} \frac {L(\lambda x)}{L(x)} =1$.
A function $g\colon (0,\infty) \to (0,\infty)$ is called \emph{regularly varying at infinity} if, for $x$ large enough, it can be written as:
$g(x)=x^\delta L(x)$,
for a slowly varying function $L$. The parameter $\delta$ is called the \emph{index of regular variation}.
Finally, a  measurable function $L\colon (0,\infty) \to (0,\infty)$ is called \emph{slowly varying at zero} (resp. \emph{regularly varying at zero}) if $x\to L\left(\frac 1x\right)$ is slowly varying (resp. regularly varying) at infinity.
\end{definition}

The theory of regular variation allows us to construct functions which behave asymptotically like power functions. See also the discussion in \cite{bennedsen2015hybrid}, while the standard comprehensive reference on the subject is the  book by \cite{bingham1989regular}.

\begin{assumption}
\label{veryuseful}
We  require that, for $j\in\{1,2\}$, the following integral functions are regularly varying at zero:
\begin{align}
\int_0^{x} \left(g\tot j (s)\right)^2\,ds&=x^{2\delta\tot j +1}L_1\tot j(x), \label{aiutoo}
\\
 \int_0^{b\tot j} \left(g\tot j(s+x)-g\tot j (s)\right)^2\, ds&=x^{2\delta\tot j +1}L_2\tot j(x),
\end{align}
for  $\delta\tot j \in (-\frac12,0)\cup(0,\frac12)$, where $L_1\tot j(x)$ and $L_2\tot j(x)$ are positive,   slowly varying functions at zero which are continuous on $(0,\infty)$.
\end{assumption}
In this situation, the restriction $\delta\tot j\in (-\frac12,0)\cup(0,\frac12)$ ensures that the process leaves the semimartingale class.

\begin{example}
Suppose for simplicity $g(x)=x^\delta$ (where we suppress the superscripts again). Then we can  show that, if $\delta \in (-\frac 12,0)\cup (0,\frac 12)$, then 
\[
\int_0^{b} (g(x+\Delta_n)-g(x))^2\,dx = O(\Delta_n^{2\delta+1}).
\]
With the change of variable $x= y\Delta_n$ we get:
\[
\begin{split}
&\Delta_n \int_0^{bn}\left(g\left(\Delta_n(y+1)\right)- g\left(\Delta_n y\right)\right)^2\,dy=\Delta_n^{2\delta+1}\int_0^{bn} \left((y+1)^\delta-y^\delta\right)^2\,dy
 \\
&\leq \Delta_n^{2\delta+1}\int_0^\infty \left((y+1)^\delta-y^\delta\right)^2\,dy.
\end{split}
\]
It is then sufficient to show that $\int_0^\infty \left((y+1)^\delta-y^\delta\right)^2\,dy<\infty$.
First, suppose $\delta>0$. Then, in a neighbourhood of 0, $(y+1)^\delta-y^\delta \sim 1$, while by Taylor's theorem, $\left((y+1)^\delta-y^\delta\right)^2 \sim y^{2\delta-2}$ away from 0.
Hence, if $2\delta-2<-1\iff \delta<\frac 12$ the function is integrable at infinity.
If instead $\delta <0$, changing variables $z=\frac 1y$ and writing $-\delta=:\beta>0$, we obtain:
{\small\[
\int_0^\infty \left( \left(\frac{z+1}{z}\right)^\delta-\left(\frac 1z\right)^\delta\right)^2 \frac{1}{z^2}\,dz=\int_0^\infty \left(\frac{z^{\beta-1}\left(1-(z+1)^\beta\right)}{(z+1)^\beta}\right)^2\,dz.
\]}
For $z$ close to zero, $(z+1)^\beta\sim 1+z\beta$, so around 0 the integrand is asymptotically equivalent to $z^\beta\sim 0$.
At infinity, $\left(\frac{z^{\beta-1}\left(1-(z+1)^\beta\right)}{(z+1)^\beta}\right)^2\sim\left(\frac{z^{\beta-1}\left(-(z+1)^\beta\right)}{(z+1)^\beta}\right)^2=z^{2\beta-2}$, so we must impose $2\beta-2<-1\iff \beta <\frac 12 \iff \delta >-\frac 12.$
\end{example}

\begin{example} A popular example in applications is the so-called  Gamma kernel which is given by 
$g(x)=e^{-\lambda x} x^{\delta} 1_{\{x>0\}}$,  for $\lambda>0, \delta>-\frac{1}{2}$,
where superscripts are suppressed. 
 A review paper on the importance of the Gamma kernel in this context is given in \cite{barndorff2016assessing}. By way of example, the Gamma kernel is very relevant in turbulence  modelling, indeed $g(x)\sim x^{\delta}$, close to zero, and $\delta$ is called the \emph{scaling parameter}. A scaling parameter equal to $-\frac 16$ fits well with Kolmogorov's scaling law in turbulence  (see \cite{corcuera2013asymptotic}).

Note that with $\delta \in (-\frac12,0)\cup (0,\frac12],$ $g$ satisfies Assumptions \ref{squareassumm} and \ref{veryuseful} (a proof of the second statement is contained in \cite{barndorff2011multipower}). Indeed, for $\delta$ in the stated range, $g' \notin L^2(0,\infty)$, since it is unbounded and not square integrable near zero; in this case the moving average process (and also the Brownian semistationary process) is not a semimartingale.
 If $\delta > 0$, let  $b$ be the inflection point of $g$ and $\bar y$ its point of maximum. Then $b>\bar y$, and, if $y>b$, we have that $
((g')^2)'(y)=2g'(y)g''(y) \leq 0$,
since the Gamma kernel decreases after the maximum and the second derivative tends to $0+$ for $y\to \infty$.
If $\delta\leq0$, the condition that $(g')^2$ is non increasing in $[b,\infty)$ is satisfied  for any $b>0$.
Finally, $g'$ is obviously square integrable over $[b,\infty)$ thanks to its exponential decay.

\end{example}

The next  assumption we assume throughout concerns the stochastic correlation process $\rho$.
\begin{assumption}\label{As_rho}
The paths of $\rho$ are almost surely H\"older continuous with exponent  $\alpha$.\label{holderassum}
\end{assumption}

\begin{example}
If $\rho$ can be written as a diffusion:
$\rho_t=\int_0^t \kappa_s\,dW^*_s$ 
with $W^*$ a Brownian measure independent of $W\tot1,\tilde W$, and $\kappa$ satisfying some mild conditions,  then $\rho$ admits a version which is $\alpha-$H\"older continuous, for all $\alpha<\frac12$.
We can for example take the solution to the SDE:
\[
d\rho_t=\int_0^t \sqrt{(1-\rho_t)(1+\rho_t)}\,dW_t,
\]
which defines a stochastic process bound to stay between -1 and +1 (with the addition of an appropriate drift, such a process takes the name of \emph{Jacobi process}, see e.g.~\cite{veraart2012stochastic}).
\end{example}

\section{Results}\label{S3}
We will now state our main results where we distinguish  two different scenarios: The first scenario  considers the case when  
the  functions $g^{(j)}$ have a limit at zero, but the resulting process ${\bf Y}$ falls outside  the  semimartingale class. In the second scenario we will instead allow the  functions $g^{(j)}$ to blow up near zero, but we ask that they are decreasing.

\subsection{Weak law of large numbers for bivariate moving average processes with stochastic correlation}
\subsubsection{Scenario 1: The non-semimartingale case when   $g\tot1(0^+)g\tot2(0^+)<\infty$}
\label{asknunatsaesac}
This subsection presents the first of our convergence results for the realised covariation. 
Here we also impose an assumption on the limit behaviour of the functions $g^{(j)}$ at 0.
\begin{assumption}
\label{gizeroassum}
We suppose that $\delta\tot1+\delta\tot2\geq0$ and that the limit $g\tot 1 (0^+)g\tot2 (0^+):=\lim_{x\to 0^+}g\tot 1 (x)g\tot2 (x)$,
exists finite. If $\delta\tot1+\delta\tot2=0$, then we require that $\lim_{x\to 0^+}L_2\tot 1(x)L_2\tot2(x)=0$.
\end{assumption}
\begin{remark}
If we further assumed that:
\begin{equation}
\label{ultima?}
g\tot j(x) = x^{\delta\tot j} L_3 \tot j(x),
\end{equation} where $L_3$ is slowly varying, then Assumption \ref{gizeroassum} would tell us that $\lim_{x\to 0^+}g\tot 1 (x)g\tot2 (x)=0$. This is a common practical situation,  where both the kernels $g$ can be taken to be  $x^{\delta\tot j} e^{-\lambda\tot j x}$, for some positive, potentially different $\delta\tot1, \delta \tot2$.
Note, however, that  \eqref{aiutoo} is more general. Indeed, taking derivatives of $\eqref{aiutoo}$, we get:
\[
\left(g\tot{j}(x)\right)^2=(2\delta\tot j+1)x^{2\delta\tot j}L_1\tot j(x)+x^{2\delta\tot j+1}\left(L_1\tot j(x)\right)'.
\]
The derivative of a slowly varying function need not be slowly varying (see \cite{de1979derivatives}), hence we are not necessarily in the same context of \eqref{ultima?}.
%

\end{remark}

We can now formulate our first law of large numbers for the realised covariation of two non-semimartingales. 

\begin{theorem}[First law of large numbers]
\label{gizerotheo}
Suppose that the Assumptions \ref{squareassumm}, \ref{veryuseful}, \ref{As_rho} and \ref{gizeroassum}   are satisfied.
 Then, the following \emph{u.c.p.} convergence holds:
\begin{equation}
\label{convergencetoprove}
\left(\sum_{i=1}^{\floor{nt}} \DD Y\tot1 \DD Y^{(2)}\right)_{t\in[0,T]} \overset{\text{u.c.p.}}{\longrightarrow}\left(g^{(1)}(0^+)g^{(2)}(0^+)\int_0^t \rho_s\,ds\right)_{t\in[0,T]}, \quad \text{ as } n \to \infty.
\end{equation}
\end{theorem}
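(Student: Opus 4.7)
The natural strategy is the \emph{fresh-noise/memory} decomposition
\begin{equation*}
\DD Y\tot{j}=M_i\tot{j}+R_i\tot{j},\quad M_i\tot{j}:=\DDint g\tot{j}(i\Delta_n-s)\,dW_s\tot{j},\quad R_i\tot{j}:=\int_{-\infty}^{(i-1)\Delta_n}\!\bigl(g\tot{j}(i\Delta_n-s)-g\tot{j}((i-1)\Delta_n-s)\bigr)dW_s\tot{j}.
\end{equation*}
Expanding $\DD Y\tot{1}\DD Y\tot{2}$ produces four sums which I treat separately. Throughout I condition on $\F^\rho:=\sigma(\rho)$: since $\rho\ci(W\tot{1},\tilde W)$, conditionally $\rho$ is deterministic and the conditional It\^o isometry applies. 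U.c.p.\ convergence will follow once each sum's conditional mean and variance are controlled uniformly in $t\in[0,T]$ through the u.c.p.\ criteria recalled in Section~\ref{Sectucp}.

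For the principal sum $\sum_i M_i\tot{1}M_i\tot{2}$, the representation $dW\tot{2}=\rho\,dW\tot{1}+\sqrt{1-\rho^2}\,d\tilde W$ together with the conditional It\^o isometry gives
\begin{equation*}
\E\bigl[M_i\tot{1}M_i\tot{2}\mid\F^\rho\bigr]=\DDint g\tot{1}(i\Delta_n-s)g\tot{2}(i\Delta_n-s)\rho_s\,ds.
\end{equation*}
The change of variable $u=i\Delta_n-s$ combined with the finiteness and continuity of $g\tot{1}g\tot{2}$ at $0^+$ (Assumption~\ref{gizeroassum}) yields $\Delta_n^{-1}\!\int_0^{\Delta_n}g\tot{1}(u)g\tot{2}(u)\,du\to g\tot{1}(0^+)g\tot{2}(0^+)$, while the $\alpha$-\Holder continuity of $\rho$ (Assumption~\ref{As_rho}) lets me freeze $\rho_s\approx\rho_{(i-1)\Delta_n}$ at cost $O(\Delta_n^\alpha)$ per cell. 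Hence $\sum_{i\le\floor{nt}}\E[M_i\tot{1}M_i\tot{2}\mid\F^\rho]$ is a Riemann sum converging uniformly in $t$ to $g\tot{1}(0^+)g\tot{2}(0^+)\int_0^t\rho_s\,ds$. The centred residual $\sum_i(M_i\tot{1}M_i\tot{2}-\E[\cdot\mid\F^\rho])$ consists of conditionally orthogonal terms, since distinct indices integrate against $W\tot{1},\tilde W$ on disjoint intervals; Wick's formula applied to the conditionally Gaussian pair $(M_i\tot{1},M_i\tot{2})$ then yields a per-term variance of order $\Delta_n^2$ (up to slowly varying factors from Assumption~\ref{veryuseful}), hence a total variance $O(\Delta_n)\to0$.

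The two mixed sums $\sum M_i\tot{1}R_i\tot{2}$ and $\sum R_i\tot{1}M_i\tot{2}$ have conditional mean zero, because inside each summand the two factors integrate $W\tot{1}$ (and $\tilde W$) over disjoint intervals. Their conditional variances are estimated by Cauchy-Schwarz using $\E[(M_i\tot{j})^2\mid\F^\rho]=\int_0^{\Delta_n}(g\tot{j}(u))^2du=\Delta_n^{2\delta\tot{j}+1}L_1\tot{j}(\Delta_n)$ and $\E[(R_i\tot{k})^2\mid\F^\rho]=\Delta_n^{2\delta\tot{k}+1}L_2\tot{k}(\Delta_n)+O(\Delta_n^2)$, which follow from Assumption~\ref{veryuseful} and Remark~\ref{marvellous}; the resulting aggregate decays at the same algebraic rate as the memory term below.

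The genuine obstacle is $\sum_i R_i\tot{1}R_i\tot{2}$, whose conditional mean equals
\begin{equation*}
\sum_i\int_{-\infty}^{(i-1)\Delta_n}\!\bigl(g\tot{1}(i\Delta_n-s)-g\tot{1}((i-1)\Delta_n-s)\bigr)\bigl(g\tot{2}(i\Delta_n-s)-g\tot{2}((i-1)\Delta_n-s)\bigr)\rho_s\,ds.
\end{equation*}
Using $|\rho_s|\le1$, Cauchy-Schwarz inside each $s$-integral, followed by Assumption~\ref{veryuseful} for the near-origin contribution and Remark~\ref{marvellous} for the tail on $[b,\infty)$, bounds each summand by a constant times $\Delta_n^{\delta\tot{1}+\delta\tot{2}+1}\bigl(L_2\tot{1}(\Delta_n)L_2\tot{2}(\Delta_n)\bigr)^{1/2}$. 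Summing $\floor{nt}$ such terms gives total order $\Delta_n^{\delta\tot{1}+\delta\tot{2}}\bigl(L_2\tot{1}(\Delta_n)L_2\tot{2}(\Delta_n)\bigr)^{1/2}$, which is precisely the expression that Assumption~\ref{gizeroassum} is designed to kill: either $\delta\tot{1}+\delta\tot{2}>0$ absorbs the power and leaves a slowly varying factor, or in the borderline case $\delta\tot{1}+\delta\tot{2}=0$ the factor $L_2\tot{1}(x)L_2\tot{2}(x)\to0$ is imposed precisely to close the argument. A parallel conditional Wick/variance estimate disposes of the centred part at the same rate. All four error bounds are uniform in $t\in[0,T]$, so the four convergences combine into the claimed u.c.p.\ statement.
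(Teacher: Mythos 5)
Your fixed-$t$ analysis follows essentially the paper's route: the fresh-noise/memory split, conditioning on the correlation process, conditional It\^o isometry and Gaussian (Wick) fourth-moment computations, freezing $\rho$ via H\"older continuity, Potter's bounds, and Assumption \ref{gizeroassum} to dispose of the memory--memory contribution are exactly the ingredients of the paper's decomposition into $A_n$ and $B_n$. However, two steps are not justified as written. First, for the mixed sums $\sum_i M_i\tot1 R_i\tot2$ and $\sum_i R_i\tot1 M_i\tot2$ you need that the summands are conditionally \emph{uncorrelated across $i$} (the fresh factor of the later summand is centred and independent of everything earlier), so that the variance of the sum is the sum of the variances, of order $\Delta_n^{2(\delta\tot1+\delta\tot2)+1}L_1\tot{a}(\Delta_n)L_2\tot{b}(\Delta_n)\to 0$ by Potter's bounds alone; this is what the paper does in Proposition \ref{isdughdv}. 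A termwise Cauchy--Schwarz estimate summed over $i$ --- which is what your claim that the aggregate ``decays at the same algebraic rate as the memory term'' amounts to --- only yields $\Delta_n^{\delta\tot1+\delta\tot2}\sqrt{L_1\tot{a}(\Delta_n)L_2\tot{b}(\Delta_n)}$, and in the borderline case $\delta\tot1+\delta\tot2=0$ Assumption \ref{gizeroassum} forces only $L_2\tot1 L_2\tot2\to 0$, not the mixed products involving $L_1\tot{a}$; so without the across-$i$ orthogonality the borderline case does not close.

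Second, the upgrade to u.c.p.\ convergence is asserted rather than proved: the criterion of Theorem \ref{ucpsuff} that you invoke requires processes with increasing paths, which none of your four partial-sum processes possess, and bounds on $\E\left[|E_n(t)|\right]$ or $\E\left[E_n(t)^2\right]$ that are uniform in $t$ do not by themselves control $\sup_{t\leq T}|E_n(t)|$. The paper resolves this through the polarisation identity of Remark \ref{ucpremark}, which reduces the problem to realised squared-variation processes (these do have increasing paths) together with the univariate results of Barndorff-Nielsen and Schmiegel. Your route can be repaired without polarisation, but you must say how: the Riemann-sum parts and the whole memory--memory term are dominated by sums of absolute values, hence controlled uniformly in $t$, while the centred fresh and mixed parts are martingale-difference arrays with respect to $\sigma(\rho)\vee\mathscr F^{W\tot1,\tilde W}_{i\Delta_n}$, so Doob's maximal inequality converts your endpoint variance bounds into supremum bounds. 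As it stands, the u.c.p.\ conclusion and the borderline case of the mixed terms are genuine gaps, although both are fixable within your framework.
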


\subsubsection{Scenario 2: The non-semimartingale with decreasing functions $g^{(1)}, g^{(2)}$} \label{oihghn} 
In the case when  the limit $\lim_{x\to 0^+} g\tot1(x)g\tot2(x)$ does not exist finite we need to proceed differently in order to derive a weak law of large numbers for the realised covariation. A notable example of this scenario is the case when 
$g^{(j)}$ are Gamma kernels given by $g^{(j)}(x)=x^{\delta^{(j)}}e^{-\lambda^{(j)} x}$, for $ \delta^{(j)} <0, \lambda^{(j)}>0$ for $j=1,2$.

The method we are going to propose in the following is motivated by the method used in the univariate case by  \cite{barndorff2009Brownian}. To this end, let us  introduce the following notation.
We  write:
\[
\DD Y\tot1=\int_{-\infty}^{i\Delta_n} \phi^{(1)}_{\Delta_n}(i\Delta_n-s)\,dW^{(1)}_s,
\]
where 
\[ 
\phi_{\Delta_n}^{(j)}(s)=\begin{cases} g^{(j)} (s), \qquad &s\leq \Delta_n,\\ g^{(j)} (s)-g^{(j)} (s-\Delta_n), \qquad &s> \Delta_n,\end{cases} 
\]
and hence
\[ 
\phi_{\Delta_n}^{(j)}(i\Delta_n-s)=\begin{cases} g^{(j)} (i\Delta_n-s), \qquad &s\geq (i-1)\Delta_n,\\ g^{(j)} (i\Delta_n-s)-g^{(j)} (\qaz-s), \qquad &s< (i-1)\Delta_n.\end{cases}
\]

For reasons that will become apparent later, we need to impose monotonicity of our kernel functions. Since we want to be able to cover the case where $g$ goes to infinity close to zero, we introduce the following assumption:
\begin{assumption}
\label{monotonic}
We assume that $g\tot1, g \tot2$  are both  decreasing on $\R^+$.
\end{assumption}

We need to formulate another rather technical assumption which has first been formulated in \cite{GranelliVeraart2017b}. To this end let us introduce the necessary notation: We define the bivariate Gaussian process $({\bf G}_t)_{t\geq 0}$ whose components are given by $G^{(1)}_t=Y^{(1)}_t=\int_{-\infty}^t g^{(1)}(t-s)dW_s^{(1)}$ and $G^{(2)}_t=\int_{-\infty}^t g^{(2)}(t-s)dW_s^{(1)}$.
\begin{remark}
As a consequence of Assumption \ref{veryuseful},   if we denote for $i\in\{1,2\}$:
$$\bar R\tot i (t):=\E\left[\left(G\tot i_{s+t}-G\tot i_s\right)^2\right],$$ 
then it holds that:
$$\bar R\tot i (t)=t^{2\delta \tot i +1} L_0\tot i (t),$$
for $\delta\tot i\in\left(-\frac 12,\frac12\right)\setminus \{0\},$ where $ L\tot i_0(t)$ is  a slowly varying function at zero. Let us denote by $\tilde L\tot {i,j}_0(t):={\sqrt{ L\tot i_0(t) L\tot j_0(t)  }}$ another slowly varying function at zero.
\end{remark}

For $i,j\in \{1,2\}$, we write $\rho_{i,j}=\rho$ for $i\not =j$ and $\rho_{i,j}=1$ for $i=j$. Moreover, we  introduce the functions mapping $\R^+$ into $\R^+$, with 
$i,j\in\{1,2\}$:
\begin{equation}
\label{lplotk}
\bar R\tot {i,j} (t) :=\E\left[\left(G\tot j _{t}-G \tot i _0\right)^2\right]
=\norm{g\tot i}_{L^2}^2+\norm{g\tot j}_{L^2}^2-2\E\left[G \tot i_0G\tot j_{t}\right].
\end{equation}
We note that we can write
\begin{align*}
\bar R\tot {i,j} (t)
&=C_{i,j} + 2\rho_{i,j}\int_0^{\infty}(g^{(j)}(x)-g^{(j)}(x+t))g^{(i)}(x)dx,
\end{align*}
 where  $C_{i,j}:=\norm{g\tot i}_{L^2}^2+\norm{g\tot j}_{L^2}^2-2\rho_{i,j}\int_0^{\infty}g^{(i)}(x)g^{(j)}(x)dx$, where in particular $C_{i,i}=0$. 
We can now formulate  our next assumption.
\begin{assumption} \label{assss}
For all $t\in(0,T)$, there exist
slowly varying functions $L_0\tot {i,j}(t)$ and $L_2\tot{i,j}(t)$ which are  continuous on $(0,\infty)$ such that 
\begin{equation}\label{mfkjfkfid}
\bar R\tot {i,j} (t)= C_{i,j} + \rho_{i,j}
 t^{\delta\tot i+\delta\tot j+1}L_0\tot {i,j}(t), \quad \text{ for  } i,j\in\{1,2\},
 \end{equation}
and
\begin{equation*}
  \frac{1}{2}(\bar R\tot{i,j})''(t)=\rho_{i,j}t^{\delta\tot i+\delta\tot j-1} L_2\tot{i,j}(t), \quad \text{ for  } i,j\in\{1,2\},
\end{equation*}
where $\delta\tot 1, \delta\tot 2\in\left(-\frac 12,\frac12\right)\setminus \{0\}$.

Also, if we denote $\tilde L\tot {i,j}_0(t):={\sqrt{ L\tot{i,i}_0(t) L\tot{j,j}_0(t)  }}$,
we ask that  the functions $L_0\tot {i,j}(t)$ and $L_2\tot{i,j}(t)$ are such that, for all $\lambda >0$, { there exists a $H\tot{i,j}\in \R$ such that:}
\begin{equation}
\label{ewiue}
\lim_{t\to 0+} \frac{L_0\tot {i,j}(\lambda t)}{\tilde L_0\tot{i,j}(t)}= H\tot{i,j}<\infty,
\end{equation}
and that there exists $b\in (0,1)$, such that:
\begin{equation}\label{ass4}
\limsup_{x\to 0^+} \sup_{{y\in(x,x^b)}}\left|\frac{L\tot{i,j}_2(y)}{\tilde L_0^{(i,j)}(x)}\right|<\infty.
\end{equation}
\end{assumption}

\begin{remark}
We remark that Assumption \ref{assss} is important for the setting when $i\not =j$, in the case when $i=j$ the assumption is implied by the previous assumptions as discussed before.
\end{remark}

\begin{remark}
Assumption \ref{assss} can be seen as the extension of the classical assumptions on the variogram of a stationary process, see e.g.~\cite{corcuera2013asymptotic,barndorff2013limit, GL1989}.
\end{remark}
We need to introduce the following scaling factor for the realised covariation:
The scaling factor $c(x)$ is defined as
\begin{align*}
c(x)&:=\int_0^\infty\phi_{x}^{(1)}(s)\phi_{x}^{(2)}(s)\,ds\\
&=\int_0^{x} g\tot1(s)g\tot2(s)\,ds+\int_0^\infty\left(g\tot1(s+x)-g\tot1(s)\right)\left(g\tot2(s+x)-g\tot2(s)\right)\,ds.
\end{align*}
\begin{assumption}
\label{terrific}
The scaling factor
satisfies
\begin{equation}\label{pgkfjf}
c(\Delta_n)= \Delta_n^{\delta\tot1+\delta\tot2+1} L\tot{1,2}_4(\Delta_n),
\end{equation}
where $L_4\tot{1,2}$ is a continuous function on $(0,\infty)$ which is slowly varying  at zero and $\delta\tot 1, \delta\tot 2\in\left(-\frac 12,\frac12\right)\setminus \{0\}$. 
Moreover, we assume that there exists a constant $|H|<\infty$ such that 
\begin{align}\label{terrific2}
\lim_{x\to 0+}\frac{L_4^{(1,2)}(x)}{\tilde L_0^{(1,2)}(x)}=H.
\end{align}

\end{assumption}


Note that since the kernel functions are positive, decreasing and differentiable, $c(x)$ is a positive, increasing function of $x$.

We can now formulate the second weak law of large numbers for the scaled realised covariation of two non-semimartingales. 
\begin{theorem}
\label{weaktheonovol}
Suppose that Assumptions \ref{squareassumm}, \ref{veryuseful},  \ref{holderassum}, \ref{monotonic}, \ref{assss}  and  \ref{terrific} hold.
Then:
\[
\left(\Delta_n\frac{\sum_{i=1}^n \DD Y\tot1 \DD Y^{(2)}}{c(\Delta_n)}\right)_{t\in[0,T]}\overset{\text{u.c.p.}}{\longrightarrow} \left(\int_0^t \rho_l\,dl\right)_{t\in[0,T]}, \quad \text{ as } n \to \infty.
\]
\end{theorem}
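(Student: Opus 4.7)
The plan is to exploit the independence of the correlation process $\rho$ from the driving Brownian measures $W\tot1,\tilde W$ by conditioning on $\rho$, and to decompose the scaled realised covariation into a bias term (its conditional mean) plus a noise term whose conditional variance vanishes. Throughout set $X_n(t) := \Delta_n c(\Delta_n)^{-1}\sum_{i=1}^{\floor{nt}} \DD Y\tot1 \DD Y\tot2$ and let $\F^\rho$ denote the $\sigma$-algebra generated by $\rho$.

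For the conditional mean, since $\rho$ is independent of $W\tot1,\tilde W$, a stochastic Fubini applied to the $\phi_{\Delta_n}^{(j)}$-representation of the increments gives
\[
\E[\DD Y\tot1\DD Y\tot2 \mid \F^\rho] = \int_0^\infty \phi_{\Delta_n}^{(1)}(u)\phi_{\Delta_n}^{(2)}(u)\,\rho_{\qa-u}\,du,
\]
which is directly comparable to $c(\Delta_n)=\int_0^\infty \phi_{\Delta_n}^{(1)}(u)\phi_{\Delta_n}^{(2)}(u)\,du$. Note that by Assumption \ref{monotonic} the product $\phi_{\Delta_n}^{(1)}\phi_{\Delta_n}^{(2)}$ is nonnegative on $[0,\infty)$, so the density $\phi_{\Delta_n}^{(1)}\phi_{\Delta_n}^{(2)}/c(\Delta_n)$ is a genuine probability density, which I expect to behave as an approximate identity concentrated near $u=0$ (a fact made quantitative by the regular variation \eqref{pgkfjf}). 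Replacing $\rho_{\qa-u}$ by $\rho_{\qa}$ at the cost of a H\"older error $u^\alpha$ (Assumption \ref{holderassum}) and splitting the $u$-integral at $u=\Delta_n^\gamma$ for a suitable $\gamma\in(0,1)$, I obtain
\[
\Delta_n c(\Delta_n)^{-1}\sum_{i=1}^{\floor{nt}}\E[\DD Y\tot1\DD Y\tot2\mid \F^\rho] = \Delta_n\sum_{i=1}^{\floor{nt}}\rho_{\qa} + o(1) \longrightarrow \int_0^t\rho_s\,ds
\]
uniformly in $t\in[0,T]$, almost surely, where the final step is a Riemann-sum approximation valid since $\rho$ is \cadlag.

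For the conditional variance, conditionally on $\F^\rho$ the vector $(\DD Y\tot1,\DD Y\tot2)_{i=1,\dots,n}$ is jointly centred Gaussian, so Wick's (Isserlis) formula yields
\[
\Cov\!\left(\DD Y\tot1\DD Y\tot2,\JJ Y\tot1\JJ Y\tot2\mid \F^\rho\right) = A_n^{(1,1)}(i,j)A_n^{(2,2)}(i,j) + A_n^{(1,2)}(i,j)A_n^{(2,1)}(i,j),
\]
with $A_n^{(a,b)}(i,j):=\E[\DD Y\tot a\,\JJ Y\tot b\mid \F^\rho]$. Each $A_n^{(a,b)}(i,j)$ is, up to the $\rho_{i,j}$ factor encoded in \eqref{lplotk}, a discrete second difference of $\bar R\tot{a,b}$; Taylor's theorem together with the regular variation of $(\bar R\tot{a,b})''$ from Assumption \ref{assss} yields, for $k=|i-j|\geq 1$,
\[
|A_n^{(a,b)}(i,j)| \lesssim \Delta_n^2\,(k\Delta_n)^{\delta\tot a+\delta\tot b-1}\,L_2\tot{a,b}(k\Delta_n),
\]
while the diagonal terms $A_n^{(a,a)}(i,i)\asymp \Delta_n^{2\delta\tot a+1}L_0\tot{a,a}(\Delta_n)$. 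Summing and dividing by $c(\Delta_n)^2\asymp\Delta_n^{2\delta\tot1+2\delta\tot2+2}\tilde L_0\tot{1,2}(\Delta_n)^2$ (Assumption \ref{terrific}), the diagonal contribution to $\Var(X_n(t)\mid \F^\rho)$ is $O(\Delta_n)$, and the off-diagonal contribution, after estimating $\sum_{k=1}^n k^{2\delta\tot1+2\delta\tot2-2}$ either by a convergent series or by $n^{2\delta\tot1+2\delta\tot2-1}$, is of order $\Delta_n^{2-2\delta\tot1-2\delta\tot2}$ up to slowly varying factors; both vanish since $\delta\tot1+\delta\tot2<1$. The uniform control on ratios of slowly varying functions that makes this comparison rigorous is exactly the content of \eqref{ewiue} and \eqref{ass4}.

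Combining these two steps via Chebyshev's inequality yields convergence in probability of $X_n(t)$ to $\int_0^t\rho_s\,ds$ for every fixed $t$; since the limit is continuous and both estimates are uniform in $t\in[0,T]$, a standard tightness-plus-pointwise-limit argument (applied to the difference $X_n(t)-\Delta_n\sum_{i=1}^{\floor{nt}}\rho_{\qa}$) upgrades the convergence to u.c.p. The main obstacle will be the off-diagonal variance analysis: namely, carefully tracking the slowly varying factors produced by the cross-covariance terms $A_n^{(1,2)}(i,j)$ and $A_n^{(2,1)}(i,j)$ uniformly in $k=|i-j|\in\{1,\dots,n\}$, and verifying that \eqref{ewiue}--\eqref{ass4} furnish exactly the bounds needed to render the resulting sum comparable with $c(\Delta_n)^2$ after normalisation.
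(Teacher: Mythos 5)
Your strategy coincides with the paper's: condition on $\mathscr F^\rho$, handle the conditional mean by viewing $\phi^{(1)}_{\Delta_n}\phi^{(2)}_{\Delta_n}/c(\Delta_n)$ as a probability density (this is where Assumption \ref{monotonic} enters) concentrating at the origin, handle the fluctuation term via the conditional Gaussian (Isserlis) covariance formula (Lemma \ref{importantlemma}) together with covariance-decay estimates coming from Assumptions \ref{assss} and \ref{terrific}, and finish with Chebyshev and a Ces\`aro summation. Your variance bookkeeping (diagonal $O(\Delta_n)$, off-diagonal $O(\Delta_n^{2-2\delta\tot1-2\delta\tot2})$ up to slowly varying corrections) matches Proposition \ref{summingup}, and your bound $|A_n^{(a,b)}(i,j)|\lesssim \Delta_n^2 (k\Delta_n)^{\delta\tot a+\delta\tot b-1}L_2\tot{a,b}(k\Delta_n)$ is precisely the estimate the paper imports from \cite{GranelliVeraart2017b}. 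One point to make explicit there: for $a\neq b$ the conditional covariances involve $\rho_s$ varying inside the integral, so they are not literally second differences of $\bar R\tot{a,b}$; you reduce to the $\rho\equiv 1$ Gaussian quantity by taking absolute values, and this uses the sign-definiteness of the kernel increments guaranteed by Assumption \ref{monotonic}.

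Two steps need repair. First, in the bias term you split the $u$-integral at $u=\Delta_n^\gamma$, but the stated assumptions do not give negligible $\pi_n$-mass beyond a threshold shrinking with $n$: away from the origin the only control on $\phi^{(j)}_{\Delta_n}$ is the mean-value bound $\Delta_n\sup_{[\epsilon,b\tot j]}|(g\tot j)'|$ plus square-integrability of $(g\tot j)'$ on $(b\tot j,\infty)$, and that supremum is finite only for a \emph{fixed} $\epsilon>0$, since by construction $(g\tot j)'$ blows up at $0$ with no assumed rate; nothing controls $[\Delta_n^\gamma,\epsilon]$. The fix is either to split at fixed $\epsilon$, bound the tail by $\pi_n((\epsilon,\infty))$ times a constant (using $|\rho|\le 1$) and let $\epsilon\to 0$ after $n\to\infty$, or to argue as in the paper: tightness plus the estimate \eqref{weakconv1} give $\pi_n\Rightarrow\delta_0$, and Proposition \ref{newconvergenceprop} transfers the Riemann-sum convergence. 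Second, the final upgrade to u.c.p.\ is not a routine ``tightness plus pointwise limit'' step: the summands $\DD Y\tot1\DD Y\tot2$ have no fixed sign, so the partial-sum process is not monotone in $t$ and Theorem \ref{ucpsuff} does not apply to it directly. The paper's Remark \ref{ucpremark} resolves this by polarisation, writing the covariation in terms of realised quadratic variations, which do have increasing paths and whose univariate limits are known from \cite{barndorff2009Brownian}, and then invoking metrizability of u.c.p.\ convergence; you should include this (or an equivalent uniformity argument) rather than leave it implicit.
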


We note that this result differs from the result obtained in Theorem \ref{gizerotheo} by the fact that we need to include the additional scaling factor $\frac{\Delta_n}{c(\Delta_n)}\sim \Delta_n^{-\delta\tot1-\delta\tot2}$. While this might seem innocent at first sight, it does impose restriction on empirical applications, since the scaling factor depends on the typically unknown functions $g\tot1 $ and $g\tot2$. 
Note further, that for $n$ large enough, the scaling factor $\frac{\Delta_n}{c(\Delta_n)}$ is increasing with $n$.

\subsection{Weak law of large number for bivariate Brownian semistationary processes with stochastic correlation}
So far we have considered bivariate moving average processes with stochastic correlation. In a next step, we wish to consider a more general class of stochastic processes which also allows for stochastic volatility. To this end, we give the  definition of the bivariate \BSS process with stochastic correlation that generalises Definition \ref{otioti}.

\begin{definition}[Bivariate Brownian semistationary process with stochastic correlation]
\label{2dimbss}
Consider two independent Brownian measures $W\tot1$ and $\tilde W$ adapted to $\mathscr F_t$ and 
two nonnegative deterministic functions $g\tot1, g\tot2 \in L^2((0,\infty))$ which are continuous on $\mathbb{R}\setminus\{0\}$. 
Let  $\rho$ be a \cadlag stochastic process, defined on the whole real line, with paths lying in $[-1,+1]$ a.s. and
independent of $\mathscr F_t$.
Let further $\sigma \tot1, \sigma\tot2$ be \cadlag, $\mathscr F_t$-adapted stochastic processes and assume that for $i\in\{1,2\}$, and for all $t\in [0,T]$:
\[
\int_{-\infty}^t g^{(i)2}(t-s) \sigma^{(i)2}_s\,ds<\infty.
\]
Define
\[
\begin{split}
&X_t\tot1:= \int_{-\infty}^t g\tot 1(t-s)  \sigma_s^{(1)}\,dW\tot1_s,\\
& X_t^{(2)}:=\int_{-\infty}^t g\tot 2(t-s) \sigma_s^{(2)}\rho_s \,dW\tot1_s+\int_{-\infty}^t g\tot2(t-s) \sigma_s^{(2)}\sqrt{1-\rho_s^2}\,d\tilde W_s.
\end{split} 
\]
Then the vector process $(\mathbf X_t)_{t\geq 0}=(X^{(1)}_t,X^{(2)}_t)^{\top}_{t\geq 0}$
is called a bivariate Brownian semistationary process with stochastic correlation.
\end{definition}

We note that we can write
\begin{align*}
{\bf X}_t &= \int_{-\infty}^t 
\left (\begin{array}{cc}g^{(1)}(t-s)&0\\
0&g^{(2)}(t-s) \end{array}\right)
\left (\begin{array}{cc}\sigma^{(1)}_s&0\\
0&\sigma^{(2)}_s \end{array}\right)
\left (\begin{array}{cc}1&0\\
\rho_s&\sqrt{1-\rho_s^2} \end{array}\right)
d \left (\begin{array}{c}W^{(1)}_s\\
\tilde W_s \end{array}\right),\\
&=\int_{-\infty}^t 
\left (\begin{array}{cc}g^{(1)}(t-s)&0\\
0&g^{(2)}(t-s) \end{array}\right)
\left (\begin{array}{cc}\sigma^{(1)}_s&0\\
0&\sigma^{(2)}_s \end{array}\right)
d \left (\begin{array}{c}W^{(1)}_s\\
 W^{(2)}_s \end{array}\right),
\end{align*}
where the integration is to be understood componentwise.
For the increment processes, we obtain for $j\in \{1, 2\}$:
\begin{multline}
\DD X\tot j= \int_{-\infty}^{(i-1)\Delta_n}\left( g\tot{j}(i\Delta_n-s)-g\tot{j}(i-1)\Delta_n-s)\right) \sigma\tot{j}_s\,dW\tot{j}_s\\+\int_{(i-1)\Delta_n}^{i\Delta_n} g\tot{j} (i\Delta_n-s)\sigma\tot{j}_s\,dW\tot{j}_s.
\end{multline}

Lastly, we will need the following technical assumption:
\begin{assumption}
\label{strangevolassumption}
The volatility processes are assumed to be independent of the Brownian measures, i.e.: $\sigma\left(\sigma\tot1,\sigma\tot2\right)$ is independent of $\sigma\left(W\tot1, \tilde W\right)$.
We assume that for $j\in\{1,2\}$, we have that the following property holds almost surely:
\[
\int_{1}^{\infty} \left(\frac{d}{ds} g\tot j(s)\right)^2 \sigma\tot j_{y-s}\,ds <\infty, \qquad \text{for all $y\in \R^+$}.
\]
\end{assumption}

We can write:
\[
\si \DD X^{(1)}\DD X^{(2)}=\si\int_0^\infty \phi\tot{1}_{\Delta_n}(s)\sigma\tot{1}_{i\Delta_n-s}\,dW\tot{1}_{i\Delta_n-s}\int_0^\infty \phi\tot{2}_{\Delta_n}(s)\sigma\tot{2}_{i\Delta_n-s}\,dW\tot{2}_{i\Delta_n-s}.
\]
We  now consider the sigma algebra $\mathscr H:=\mathscr F^{\rho,\sigma\tot{1},\sigma\tot 2}$ generated by the processes $\rho,\sigma\tot 1, \sigma \tot 2$. We obtain:
\begin{multline}
\E\left[\DD X\tot1 \DD X^{(2)} \Bigg| \mathscr H \right]=\int_0^\infty \phi_{\Delta_n}\tot 1\phi_{\Delta_n} \tot 2\sigma_{i\Delta_n-s} \tot1 \sigma_{i\Delta_n-s} \tot2 \rho_{i\Delta_n-s}\,ds\\
=\int_{-\infty}^{(i-1)\Delta_n} \DD g\tot1 \DD g\tot2 \sigma_{s} \tot1 \sigma_{s} \tot2 \rho_{s}\,ds+\int_{(i-1)\Delta_n}^{i\Delta_n} g\tot1(i\Delta_n-s)g\tot2(i\Delta_n-s)\sigma_{s} \tot1 \sigma_{s} \tot2 \rho_{s}\,ds.
\end{multline}

We aim to show the corresponding version of Theorem \ref{weaktheonovol}, in presence of volatility.

\begin{theorem}[Law of large numbers]\label{FullStory}
Suppose that the assumptions of Theorem \ref{weaktheonovol} hold.
Further assume that  the processes $\sigma\tot1,\sigma\tot2$
 have H\"older continuous sample paths and satisfy Assumption \ref{strangevolassumption}. Then
\label{conv2}
\[
\left(\Delta_n\frac{\sum_{i=1}^{\floor{nt}} \DD X\tot1 \DD X^{(2)}}{c(\Delta_n)}\right)_{t\in[0,T]}\overset{u.c.p.}{\longrightarrow}\left(\int_0^t \sigma\tot1_l\sigma\tot2_l\rho_l\,dl\right)_{t\in[0,T]}, \quad \text{ as } n \to \infty, 
\]
where $c(\Delta_n)=\int_0^{\Delta_n} g\tot1(s)g\tot2(s)\,ds+\int_0^\infty\left(g\tot1(s+\Delta_n)-g\tot1(s)\right)\left(g\tot2(s+\Delta_n)-g\tot2(s)\right)\,ds $.
\end{theorem}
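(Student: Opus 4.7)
The plan is to exploit the independence of the volatility processes from the driving Brownian measures (Assumption~\ref{strangevolassumption}) and condition on $\mathscr{H} := \mathscr{F}^{\rho,\sigma^{(1)},\sigma^{(2)}}$, which reduces the problem to one of the same structure as Theorem~\ref{weaktheonovol} but with a volatility-weighted integrand. First I would decompose
\[
\sum_{i=1}^{\floor{nt}} \DD X\tot1 \DD X\tot2 = A_n(t) + B_n(t),
\]
where $A_n(t) := \sum_{i=1}^{\floor{nt}} \E[\DD X\tot1 \DD X\tot2 \mid \mathscr H]$ and $B_n(t)$ is the $\mathscr H$-centred remainder. From the identity displayed just before the theorem statement,
\[
\E[\DD X\tot1 \DD X\tot2 \mid \mathscr H] = \int_{-\infty}^{i\Delta_n} \phi^{(1)}_{\Delta_n}(i\Delta_n-s)\phi^{(2)}_{\Delta_n}(i\Delta_n-s)\sigma^{(1)}_s\sigma^{(2)}_s\rho_s\,ds,
\]
and $\int_{-\infty}^{i\Delta_n}\phi^{(1)}_{\Delta_n}\phi^{(2)}_{\Delta_n}\,ds = c(\Delta_n)$, so the only new ingredient relative to Theorem~\ref{weaktheonovol} is the pathwise factor $\sigma^{(1)}_s\sigma^{(2)}_s$.

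To handle $A_n$ I would apply a freezing argument. Set $\Theta_i := \sigma^{(1)}_{(i-1)\Delta_n}\sigma^{(2)}_{(i-1)\Delta_n}\rho_{(i-1)\Delta_n}$; then the leading term of $\Delta_n A_n(t)/c(\Delta_n)$ is $\Delta_n\sum_{i=1}^{\floor{nt}}\Theta_i$, a Riemann sum converging u.c.p.~to $\int_0^t\sigma^{(1)}_l\sigma^{(2)}_l\rho_l\,dl$ by \cadlag regularity. The substitution error is controlled using the H\"older continuity of $\sigma^{(j)},\rho$: for $s$ in a bounded neighbourhood of $(i-1)\Delta_n$ one has $|\sigma^{(1)}_s\sigma^{(2)}_s\rho_s-\Theta_i|=O((i\Delta_n-s)^\alpha)$ pathwise, and combining this with the rescaling $\Delta_n/c(\Delta_n)\sim \Delta_n^{-\delta^{(1)}-\delta^{(2)}}$ and the regular variation from Assumption~\ref{terrific} yields an $o(1)$ error. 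The tail contribution from $s\ll (i-1)\Delta_n$ is controlled by Assumption~\ref{strangevolassumption} together with the $L^2$-decay of $(g^{(j)})'$ from Assumption~\ref{squareassumm}.

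For $B_n$, conditionally on $\mathscr H$ the vector $(\DD X\tot1,\DD X\tot2,\JJ X\tot1,\JJ X\tot2)$ is Gaussian with deterministic (path-dependent) parameters, so Isserlis' formula gives
\[
\Cov\!\left(\DD X\tot1 \DD X\tot2,\, \JJ X\tot1 \JJ X\tot2\,\big|\,\mathscr H\right) = C_{i,j}^{1,1}C_{i,j}^{2,2} + C_{i,j}^{1,2}C_{i,j}^{2,1},
\]
where $C_{i,j}^{a,b} := \E[\DD X\tot a \JJ X\tot b\mid \mathscr H]$ is a $\phi$-weighted integral of $\sigma^{(a)}\sigma^{(b)}$ (and $\rho$ when $a\ne b$). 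After a standard localisation ensuring $\sup_{s\le T}(|\sigma^{(1)}_s|\vee|\sigma^{(2)}_s|)<\infty$, each $C_{i,j}^{a,b}$ is dominated pathwise by a constant multiple of the deterministic covariance used in the proof of Theorem~\ref{weaktheonovol}, and the same combinatorial estimates driven by Assumption~\ref{assss} give $\Var(B_n(t)\mid\mathscr H) = o((c(\Delta_n)/\Delta_n)^2)$ a.s. Conditional Chebyshev then produces $\Delta_n B_n(t)/c(\Delta_n)\to 0$ in probability pointwise, and the u.c.p.~upgrade follows from tightness arguments identical to those used in Theorem~\ref{weaktheonovol}.

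The main obstacle is transferring the fourth-moment bookkeeping of the no-volatility case to the present random-kernel setting without losing the asymptotic rates. Pathwise boundedness of $\sigma^{(1)},\sigma^{(2)}$ on $[0,T]$ is routine via localisation, but the tail integrals appearing in the $C_{i,j}^{a,b}$ require Assumption~\ref{strangevolassumption} to remain finite almost surely. Once this is secured, the regular variation encoded in Assumptions~\ref{assss} and~\ref{terrific} propagates through with only an extra multiplicative factor of $\sup_{s\le T}|\sigma^{(1)}_s\sigma^{(2)}_s|^2$, which is finite almost surely by the \cadlag property of the volatility paths.
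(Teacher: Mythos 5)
Your proposal is correct and follows essentially the same route as the paper: condition on $\mathscr H=\mathscr F^{\rho,\sigma\tot1,\sigma\tot2}$, split into the conditional-mean term and the $\mathscr H$-centred remainder, handle the former via H\"older continuity of $\rho,\sigma\tot1,\sigma\tot2$ together with the concentration of the normalised kernel measure at zero, and kill the latter by bounding the conditional variance through the Gaussian (Isserlis) covariance identity, reusing the covariance estimates of Proposition \ref{summingup} with $\sigma$ bounded on compacts and the tail integrals controlled by Assumption \ref{strangevolassumption}. The only cosmetic difference is that you freeze the integrand at the left endpoint instead of phrasing the conditional-mean step as $\pi_n(f_n)\to\pi(f)$ with $\pi_n\Rightarrow\delta_0$ as in Proposition \ref{newconvergenceprop2}, but both arguments rest on exactly the same ingredients already established in the proof of Theorem \ref{weaktheonovol}.
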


\section{Conclusion}\label{S4}
In this article we have proven two versions of a law of large numbers for the realised covariation of the 
bivariate moving average process with stochastic correlation. We have also proven a weak law of large numbers for a bivariate Brownian semistationary process with stochastic correlation. 
These results state the consistency of our estimator for the integrated covariance, the (possibly scaled) realised covariation. The importance of such results can be seen in the more general context of building a fully multivariate theory for general \BSS process, whose importance in stochastic modelling in e.g.~finance and turbulence, see \cite{BNBV2013Spot,corcuera2013asymptotic}, is steadily rising. Performing inference on the dependence of two such processes is of fundamental importance in practical applications.
It has to be stressed, though, that our second law of large numbers requires us to scale the estimator by the typically unknown kernel functions $g\tot1 $ and $g\tot 2$, and this makes the estimation unfeasible in practice. Further research could be devoted to estimating the kernel functions first and use a plug-in type estimator as the scaling factor of the realised covariation we have considered in this article.


On the stochastic analysis side, our results  show that it is possible to obtain convergence of the ``realised covariation'' between two non-semimartingales,  subject to appropriate scaling. Our techniques are very general and one could perhaps try to adapt them to other classes of multivariate fractional process, for example extending the theory to include the integral processes appearing in \cite{corcuera2006power}.

In future research one could also tackle the problem of deriving a central limit theorem for bivariate Brownian semistationary processes and first results along these lines are available  in \cite{GranelliVeraart2017b}.


\section{Proofs}\label{S5}
\subsection{Proof of Theorem \ref{gizerotheo}}
\subsubsection{Notation and preliminary remarks}
This section contains the full proof of Theorem \ref{gizerotheo}, which will be divided into several separate lemmas.

Recall that we are working on  a  finite, fixed time horizon $[0,T]$, but according to  Theorem \ref{ucpsuff} and  Remark \ref{ucpremark} below, we do not lose generality if, for simplicity, we restrict ourselves to $T=1$. Hence we do so in the following.

\begin{remark}[From convergence in probability to $u.c.p.$ convergence]
\label{ucpremark}
We only need to show pointwise convergence in probability of the realised covariation in order to prove Theorem \ref{gizerotheo}.
Indeed, even though we are not directly in the framework of Theorem \ref{ucpsuff}, because the realised quadratic covariation process does not have increasing paths, since the product of the increments can  be negative, we can still deduce $u.c.p.$ convergence via the polarisation identity:
\[
\si\left( \DD Y \DD X\right)=\frac12 \si \left( \DD (X+Y) \right)^2-\si\left( \DD X \right)^2-\si\left( \DD Y \right)^2.
\]
If we can prove that the left hand side converges in probability for all $t$, it will follow that the realised quadratic variation of the sum $X+Y$ converges for all $t$, because we already know from the results in \cite{barndorff2009Brownian} that convergence of the squared increments of $X$ and $Y$ separately holds for all $t$.
But then each term on the right hand side converges $u.c.p.$ because they all have increasing paths. Since $u.c.p.$ convergence is induced by a metric, we can conclude that $u.c.p.$ convergence will hold for the realised quadratic covariation too.
\end{remark}

In the following,  the notation:
$\E_\qaz$
denotes the conditional expectation operator upon the sub-$\sigma$-algebra $\mathscr G_{(i-1)\Delta_n}$ generated by the whole process $\rho$ and by the increments of the processes $W^{(1)}$ and $\tilde W$ until $(i-1)\Delta_n$, i.e.:
\[
\mathscr G_{(i-1)\Delta_n}=\sigma\{\rho_s; s\in \R\}\vee\sigma\{W^{(1)}_u-W^{(1)}_t,\tilde W_u-\tilde W_t;\quad -\infty<t\leq u\leq \qaz\}.
\]
The main idea to prove \eqref{convergencetoprove} is to add and subtract the quantity:
\begin{equation}\label{subtract}
\si \E_\qaz \left[  \DD Y\tot1 \DD Y^{(2)} \right]=\E_\qaz \left[ \si \DD Y\tot1 \DD Y^{(2)} \right],\end{equation} and  split the difference $\si \DD Y\tot1 \DD Y^{(2)} -g^{(1)}(0^+)g^{(2)}(0^+)\int_0^1 \rho_s\,ds$ into:
 \begin{equation}A_n:=
\si \DD Y\tot1 \DD Y^{(2)}-\si \E_\qaz \left[  \DD Y\tot1 \DD Y^{(2)} \right], 
\label{split1}\end{equation}
\begin{equation}B_n:=
\si \E_\qaz \left[  \DD Y\tot1 \DD Y^{(2)} \right]-g^{(1)}(0^+)g^{(2)}(0^+)\int_0^1 \rho_s\,ds. 
\label{split2}\end{equation}
The proof that $A_n$  converges to zero will take most of the present section. Proposition \ref{ndvrhdvr} contains the much shorter proof that $B_n$ converges to zero as well.

Using the notation $\DD g^{(j)}_s:= g^{(j)}\left(i\Delta_n-s\right)-g^{(j)}\left(\qaz-s\right)$, the quantity in \eqref{subtract}  equals:
\begin{equation}
\label{conditionalsplit}
\begin{split}
&\E_\qaz \left[ \si \DD Y\tot1 \DD Y^{(2)} \right]\\
&=\si\E_\qaz\left[\DDint g^{(1)}\left(i\Delta_n-s\right) dW_s^{(1)} \DDint g^{(2)}\left(i\Delta_n-s\right)dW_s^{(2)}\right]\\
&+\si \int_{-\infty}^\qaz \DD g_s\tot1 \,dW_s^{(1)} \int_{-\infty}^\qaz \DD g_s\tot2 \,dW_s^{(2)}.
\end{split}
\end{equation}
We have to establish a preliminary lemma:
\begin{lemma}
\label{unkownlemma}
The following holds:
\begin{equation}
\label{ladshgaoe}
\begin{split}
&\E_\qaz \left[ \DDint g(i\Delta_n-s)\,dW_s\right]=0,\\
&\E_\qaz\left[\DDint g^{(1)}(i\Delta_n-s)\, dW^{(1)}_s \DDint g^{(2)}(i\Delta_n-s)\,dW^{(2)}_s\right]\\
&=\DDint g^{(1)}(i\Delta_n-s)g^{(2)}(i\Delta_n-s)\,\rho_s\,ds.
\end{split}
\end{equation}
\end{lemma}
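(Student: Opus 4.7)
The plan is to exploit two pieces of structure that are built into $\mathscr G_{(i-1)\Delta_n}$: (i) the whole trajectory of $\rho$ is $\mathscr G_{(i-1)\Delta_n}$-measurable, so under the conditional expectation $\rho_s$ behaves as a deterministic function of $s$; and (ii) the increments of $W^{(1)}$ and $\tilde W$ on $((i-1)\Delta_n, i\Delta_n]$ are, by the definition of a Brownian measure and by the independence of $\rho$ from $\sigma(W^{(1)},\tilde W)$, jointly independent of $\mathscr G_{(i-1)\Delta_n}$. Together these imply that conditionally on $\mathscr G_{(i-1)\Delta_n}$ the restrictions of $W^{(1)}$ and $\tilde W$ to $((i-1)\Delta_n, i\Delta_n]$ are independent Brownian measures and $\rho$ is frozen.

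For the first identity, $\int_{(i-1)\Delta_n}^{i\Delta_n} g(i\Delta_n-s)\,dW_s$ is a Wiener integral of a deterministic function against an increment of $W\in\{W^{(1)},\tilde W\}$ that is independent of $\mathscr G_{(i-1)\Delta_n}$. Hence the conditional expectation equals the unconditional one, which is zero since the integral is centred Gaussian.

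For the second identity, I would first rewrite $W^{(2)}$ using the definition $dW^{(2)}_s=\rho_s\,dW^{(1)}_s+\sqrt{1-\rho_s^2}\,d\tilde W_s$, so that
\[
\int_{\qaz}^{i\Delta_n} g^{(2)}(i\Delta_n-s)\,dW^{(2)}_s = \int_{\qaz}^{i\Delta_n} g^{(2)}(i\Delta_n-s)\rho_s\,dW^{(1)}_s + \int_{\qaz}^{i\Delta_n} g^{(2)}(i\Delta_n-s)\sqrt{1-\rho_s^2}\,d\tilde W_s.
\]
Multiplying by $\int_{\qaz}^{i\Delta_n}g^{(1)}(i\Delta_n-s)\,dW^{(1)}_s$ and taking $\E_\qaz[\,\cdot\,]$ produces two terms: one in which two Wiener integrals against $W^{(1)}$ are paired, and one pairing $W^{(1)}$ with $\tilde W$. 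The cross term vanishes because, conditionally on $\mathscr G_{(i-1)\Delta_n}$, the increments of $W^{(1)}$ and $\tilde W$ on $((i-1)\Delta_n,i\Delta_n]$ are independent and centred, and both integrands are $\mathscr G_{(i-1)\Delta_n}$-measurable deterministic functions of $s$.

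For the surviving term, the conditional Itô isometry (valid because $g^{(1)}$, $g^{(2)}$ and the frozen $\rho_s$ give a deterministic square-integrable integrand) gives
\[
\E_\qaz\!\left[\int_{\qaz}^{i\Delta_n}\!\! g^{(1)}(i\Delta_n-s)\,dW^{(1)}_s \int_{\qaz}^{i\Delta_n}\!\! g^{(2)}(i\Delta_n-s)\rho_s\,dW^{(1)}_s\right]=\int_{\qaz}^{i\Delta_n}\!\! g^{(1)}(i\Delta_n-s)g^{(2)}(i\Delta_n-s)\rho_s\,ds,
\]
which is the claimed identity. There is no real obstacle here; the one subtlety worth stating explicitly in the write-up is the justification that conditioning on $\mathscr G_{(i-1)\Delta_n}$ preserves the Gaussian/independence structure of the future Brownian increments, which is precisely where the hypothesis that $\rho$ is independent of $\sigma(W^{(1)},\tilde W)$ is used.
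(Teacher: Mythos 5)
Your proposal is correct and follows essentially the same route as the paper: the same rewriting of $W^{(2)}$ via $dW^{(2)}_s=\rho_s\,dW^{(1)}_s+\sqrt{1-\rho_s^2}\,d\tilde W_s$, the same elimination of the $\tilde W$ cross term by independence, and the same key observation that conditioning on $\mathscr G_{(i-1)\Delta_n}$ amounts to conditioning on $\mathscr F^\rho$, i.e.\ freezing $\rho$. The only difference is in the last step, and it is minor: where you invoke the conditional It\^o isometry for the surviving term, the paper justifies the identical identity by twice differentiating the conditional characteristic function $\exp\left(-\tfrac12\int_{(i-1)\Delta_n}^{i\Delta_n}\left[\theta_1 g^{(1)}(i\Delta_n-s)+\theta_2\rho_s g^{(2)}(i\Delta_n-s)\right]^2 ds\right)$ at $\theta_1=\theta_2=0$, both arguments resting on precisely the freezing/independence point you flag at the end.
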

\begin{proof}

The first result is immediate. 
For the second one, using the definition of $W^{(2)}$:
\begin{align}
&\E_\qaz\left[\DDint g^{(1)}(i\Delta_n-s)\, dW^{(1)}_s \DDint g^{(2)}(i\Delta_n-s)\,dW^{(2)}_s\right]\nonumber\\
=&\label{firstone}\E_\qaz\left[\DDint g^{(1)}(i\Delta_n-s)\, dW^{(1)}_s \DDint g^{(2)}(i\Delta_n-s)\,\rho_sdW^{(1)}_s\right]\\
+&\E_\qaz\left[\DDint g^{(1)}(i\Delta_n-s)\, dW^{(1)}_s \DDint \label{secondone} g^{(2)}(i\Delta_n-s)\,\sqrt{1-\rho^2_s}d\tilde W_s\right].
\end{align}
Line \eqref{secondone} immediately evaluates to zero by the tower property and independence of $W^{(1)},\tilde W$.

Observe that \[\sigma\left\{W^{(1)}_u-W^{(1)}_t,\tilde W_u-\tilde W_t;\quad -\infty<t\leq u\leq \qaz\right\}\] is independent of \[\sigma\left( \int_{(i-1)\Delta_n}^{i\Delta_n} g\tot1(i\Delta_n-s)dW\tot1_s\int_{(i-1)\Delta_n}^{i\Delta_n} g\tot2(i\Delta_n-s)\rho_sdW\tot1_s\right)\vee \sigma\left(\rho_s, s\in\R\right)=:\mathscr F^\rho.\]
Hence, conditioning on $\mathscr G_{(i-1)\Delta_n}$ is equivalent to conditioning on $\mathscr F^\rho$ (\cite{williams1991probability}).

By taking the derivative of the conditional characteristic function, and exchanging differentiation and expectation thanks to  uniform boundedness, we can write:
\[
\begin{split}
-\frac{\partial^2}{\partial\theta_1\partial\theta_2}\Bigg|_{\theta_1=0,\theta_2=0}\E\Biggl[&\exp\Biggl\{i\theta_1 \int_{(k-1)\Delta_n}^{k\Delta_n} g\tot1(k\Delta_n-s)\,dW\tot1_s\\&+i\theta_2 \int_{(k-1)\Delta_n}^{k\Delta_n} g\tot2(k\Delta_n-s)\rho_s\,dW\tot1_s\Biggr\}\Bigg| \mathscr F^\rho\Biggr]\\
=-\E\Biggl[\frac{\partial^2}{\partial\theta_1\partial\theta_2}\Bigg|_{\theta_1=0,\theta_2=0}&\exp\Biggl\{i\theta_1 \int_{(k-1)\Delta_n}^{k\Delta_n} g\tot1(k\Delta_n-s)\,dW\tot1_s\\&+i\theta_2 \int_{(k-1)\Delta_n}^{k\Delta_n} g\tot2(k\Delta_n-s)\rho_s\,dW\tot1_s\Biggr\}\Bigg| \mathscr F^\rho\Biggr]
\end{split}
\]\[
=\E \Biggl[\int_{(k-1)\Delta_n}^{k\Delta_n} g\tot1(k\Delta_n-s)\,dW\tot1_s\int_{(k-1)\Delta_n}^{k\Delta_n} g\tot2(k\Delta_n-s)\rho_s\,dW\tot1_s\Bigg| \mathscr F^\rho\Biggr],
\]
but on the other hand:
\[
\begin{split}
&-\frac{\partial^2}{\partial\theta_1\partial\theta_2}\Bigg|_{\theta_1=0,\theta_2=0}\E\Biggl[\exp\Bigg\{i\theta_1 \int_{(k-1)\Delta_n}^{k\Delta_n} g\tot1(k\Delta_n-s)\,dW\tot1_s\\&\qquad\qquad\qquad\qquad\qquad+i\theta_2 \int_{(k-1)\Delta_n}^{k\Delta_n} g\tot2(k\Delta_n-s)\rho_s\,dW\tot1_s\Bigg\}\Bigg| \mathscr F^\rho\Biggr]\\
&=-\frac{\partial^2}{\partial\theta_1\partial\theta_2}\Bigg|_{\theta_1=0,\theta_2=0}\E\Biggr[\exp\Biggl\{i\int_{(k-1)\Delta_n}^{k\Delta_n} \left[\theta_1 g\tot1(k\Delta_n-s)+\theta_2  \rho_s g\tot2(k\Delta_n-s)\right]\,dW\tot1_s \Biggr\}\Bigg| \mathscr F^\rho\Biggr]\\
&=-\frac{\partial^2}{\partial\theta_1\partial\theta_2}\Bigg|_{\theta_1=0,\theta_2=0}\exp\left(-\frac12 \int_{(k-1)\Delta_n}^{k\Delta_n}\left[ \theta_1 g\tot1(k\Delta_n-s)+\theta_2 \rho_s g\tot2(k\Delta_n-s)\right]^2\, ds\right)\\
&=\int_{(k-1)\Delta_n}^{k \Delta_n} g\tot1 (k\Delta_n-s) g\tot2 (k\Delta_n-s)\rho_s\,ds,
\end{split}
\]
proving the claim.
\end{proof}

We are now ready to begin to look at the proof of Theorem \ref{gizerotheo}. The next two sections contain the full details on the convergence of $A_n$ as defined in  \eqref{split1}.

\subsubsection{
Convergence of $A_n$}

First we show the convergence of $A_n$. To this end recall that 
\[ A_n=
\si \DD Y\tot1 \DD Y^{(2)}-\si \E_\qaz \left[  \DD Y\tot1 \DD Y^{(2)} \right]. 
\]

\label{l2section}

Expanding the conditional expectation using the computations in Lemma \ref{unkownlemma} and   simple algebra gives:
\begin{align}
&\si\Biggl( \DD Y\tot1 \DD Y^{(2)}-\E_\qaz \left[ \DD Y\tot1 \DD Y^{(2)} \right]\Biggr)\nonumber =A_n^{(1)}+A_n^{(2)},\quad \text{ where } \\
A_n^{(1)}&= \label{4.1}\si\int_{-\infty}^\qaz\deltag{1} dW^{(1)}_s\DDint g^{(2)}(i\Delta_n-s)dW^{(2)}_s \\
\label{4.2}&\quad+\si\int_{-\infty}^\qaz\deltag{2} dW^{(2)}_s\DDint g^{(1)}(i\Delta_n-s) dW^{(1)}_s\\
A_n^{(2)} &=\si\Biggl(\DDint g^{(1)}(i\Delta_n-s)\,dW^{(1)}_s \DDint g^{(2)}(i\Delta_n-s)\,dW^{(2)}_s \label{4.3}
\\ &\quad - \int_\qaz^{i\Delta_n}\guno\gdue \rho_s\,ds\Biggr).
\end{align}

\begin{proposition}
\label{isdughdv}
We have the following $L^2$-convergence:
$A_n^{(1)}\stackrel{L^2}{\to} 0$ as $n \to \infty$.
\end{proposition}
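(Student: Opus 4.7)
The plan is to split $A_n^{(1)} = A_n^{(1,1)} + A_n^{(1,2)}$ following \eqref{4.1} and \eqref{4.2}, and use $(a+b)^2 \le 2a^2 + 2b^2$ to reduce to bounding each $\E[(A_n^{(1,k)})^2]$ separately, the case $k=2$ being entirely symmetric to $k=1$ under interchange of the superscripts $(1)$ and $(2)$. Writing $P_i := \int_{-\infty}^{\qaz}\deltag{1}\,dW^{(1)}_s$ and $C_i := \DDint g^{(2)}(i\Delta_n-s)\,dW^{(2)}_s$, we have $A_n^{(1,1)} = \sum_{i=1}^n P_i C_i$, and the heart of the argument is that in $\E[(A_n^{(1,1)})^2] = \sum_{i,j} \E[P_i C_i P_j C_j]$ only the diagonal terms survive.

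To see the vanishing of the off-diagonal terms, I would condition on $\mathscr G_{(j-1)\Delta_n}$ for $i<j$: the factors $P_i, C_i, P_j$ are all measurable with respect to this $\sigma$-algebra, while $C_j = \int_{\qaz}^{j\Delta_n} g^{(2)}(j\Delta_n-s)\rho_s\,dW^{(1)}_s + \int_{\qaz}^{j\Delta_n} g^{(2)}(j\Delta_n-s)\sqrt{1-\rho_s^2}\,d\tilde W_s$ has $\mathscr G_{(j-1)\Delta_n}$-measurable integrands and is driven by increments of $W^{(1)}$ and $\tilde W$ that are independent of $\mathscr G_{(j-1)\Delta_n}$. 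Hence $\E[C_j\mid\mathscr G_{(j-1)\Delta_n}] = 0$, and the cross term vanishes; the case $i>j$ is handled identically by swapping the roles of $i$ and $j$.

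For the diagonal $\E[P_i^2 C_i^2]$, I would condition on $\rho$, which is independent of the two Brownian measures. Under this conditioning, both $P_i$ and $C_i$ are centred Gaussians built from $W^{(1)}$-increments on the disjoint intervals $(-\infty,(i-1)\Delta_n]$ and $((i-1)\Delta_n,i\Delta_n]$ respectively (plus an independent $\tilde W$-piece in $C_i$), so they are conditionally independent. Their conditional variances are deterministic: after the substitution $u = (i-1)\Delta_n - s$,
\[
\E[P_i^2 \mid \rho] = \int_0^{\infty}\bigl(g^{(1)}(u+\Delta_n) - g^{(1)}(u)\bigr)^2\,du =: \tau_1(\Delta_n),
\]
and because $\rho_s^2 + (1-\rho_s^2) = 1$,
\[
\E[C_i^2 \mid \rho] = \int_0^{\Delta_n}\bigl(g^{(2)}(u)\bigr)^2\,du =: \gamma_2(\Delta_n).
\]
Taking expectations and summing, $\E[(A_n^{(1,1)})^2] = n\,\tau_1(\Delta_n)\,\gamma_2(\Delta_n)$.

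Finally, I would invoke the regular variation assumptions. Assumption \ref{veryuseful} gives $\gamma_2(\Delta_n) = \Delta_n^{2\delta^{(2)}+1}L_1^{(2)}(\Delta_n)$. Splitting $\tau_1$ at $b$, Assumption \ref{veryuseful} handles the integral on $[0,b]$ and Remark \ref{marvellous} handles the integral on $[b,\infty)$, yielding $\tau_1(\Delta_n) = \Delta_n^{2\delta^{(1)}+1}L_2^{(1)}(\Delta_n) + O(\Delta_n^2)$, with the first term dominating since $\delta^{(1)} < 1/2$. Multiplying by $n = \Delta_n^{-1}$ gives
\[
\E[(A_n^{(1,1)})^2] = O\bigl(\Delta_n^{2(\delta^{(1)}+\delta^{(2)})+1}L_2^{(1)}(\Delta_n)L_1^{(2)}(\Delta_n)\bigr),
\]
and Assumption \ref{gizeroassum} forces the exponent $2(\delta^{(1)}+\delta^{(2)})+1$ to be at least $1$, so this tends to zero because slowly varying functions at zero multiplied by any strictly positive power of the argument vanish in the limit. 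The symmetric bound on $\E[(A_n^{(1,2)})^2]$ completes the argument. I expect the subtlest step to be the conditioning identity $\E[C_j \mid \mathscr G_{(j-1)\Delta_n}] = 0$: it relies on the fact that the $\rho$-dependent integrands are measurable with respect to $\mathscr G_{(j-1)\Delta_n}$ while the driving Brownian increments are independent of it, rather than on $C_j$ being a martingale increment in the usual Brownian filtration.
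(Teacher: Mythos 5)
Your proposal is correct and follows essentially the same route as the paper: cross terms killed by conditioning on $\mathscr G_{(j-1)\Delta_n}$ via the tower property, diagonal terms reduced to the product $n\int_0^\infty\bigl(g^{(1)}(u+\Delta_n)-g^{(1)}(u)\bigr)^2du\int_0^{\Delta_n}\bigl(g^{(2)}(u)\bigr)^2du$, and then Assumption \ref{veryuseful}, Remark \ref{marvellous} and the Potter bounds of Remark \ref{potter} together with $\delta^{(1)}+\delta^{(2)}\geq 0$. The only cosmetic difference is that you obtain the diagonal identity by conditional independence given $\rho$ (disjoint integration intervals) rather than by invoking Corollary \ref{corollary5}, which yields the same expression.
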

\begin{proof}

The two terms \eqref{4.1} and \eqref{4.2} in $A_n^{(1)}$ 
 are symmetric.  Hence we only present the proof that the first one converges to zero, and the proof for the second one will follow in an analogous way.
We  compute the $L^2$ norm of the first term in  $A_n^{(1)}$: 
\begin{equation}
\label{pougewwq}
\E\left[\left(\si \int_{-\infty}^\qaz\deltag{1} dW^{(1)}_s\DDint g^{(2)}(i\Delta_n-s)dW^{(2)}_s\right)^2\right].
\end{equation}
It is easy to see that the cross products originating by squaring the sum are equal to:
\begin{multline*}
2\sum_{j<i} \E\Biggl[\int_{-\infty}^\qaz\deltag{1} dW^{(1)}_s \int_{-\infty}^{(j-1)\Delta_n} \Delta^n_j g\tot1dW^{(1)}_s\\\times\DDint g^{(2)}(i\Delta_n-s)\,dW\tot2_s \JJint g^{(2)}(j\Delta_n-s)\,dW\tot2_s\Biggr],
\end{multline*}
which is easily seen to be zero, by the tower property, conditioning on $\mathscr G_{\qaz}$.
The other term originating from \eqref{pougewwq}, is the sum of the $n$ terms squared:
\begin{align}
&\si\E\left[\left(\int_{-\infty}^\qaz\deltag{1} dW^{(1)}_s \DDint g^{(2)}(i\Delta_n-s)\,dW\tot2_s\right)^2\right]\nonumber\\
=&\si \int_{-\infty}^\qaz\left(\deltag{1}\right)^2\,ds \DDint \left(g^{(2)}(i\Delta_n-s)\right)^2\,ds\label{ekugu},
\end{align}
thanks to an application of Corollary \ref{corollary5} in the Appendix.

Now \eqref{ekugu} becomes, after a change of variable:
\begin{equation}
\label{fulaewg}
n \int_0^\infty \left(g\tot1(s+\Delta_n)-g\tot1(s)\right)^2\,ds\int_0^{\Delta_n}\left(g\tot2(s)\right)^2\,ds.
\end{equation}

From Remark \ref{marvellous} it follows that it is only necessary to consider the bounded interval $(0,b\tot1)$ in the first integral of \eqref{fulaewg}, since on $[b\tot1,+\infty)$ the integral is $O(\Delta_n^2)$.

It then follows  from Assumption \ref{veryuseful} that the quantity in \eqref{fulaewg} can be written as follows, for some constant $C$:
\[
C n\Delta_n^{2\delta \tot1 + 2 \delta\tot2+2}L_1\tot 2(\Delta_n)L_2\tot 1(\Delta_n)=C \Delta_n^{2\delta \tot1 + 2 \delta\tot2+1}L_1\tot 2(\Delta_n)L_2\tot 1(\Delta_n) \to 0,
\]
since $\delta\tot1+\delta\tot2 \geq 0$, the product of slowly varying function is slowly varying and the Potter bounds apply, which we recall in Remark \ref{potter} below. 
\end{proof}

\begin{remark}[Potter's bound for slowly varying functions at zero]
\label{potter}
For a slowly varying function $L$ at infinity, we have Potter's bound: there exist constants $\bar u, C_1, C_2$ such that, for any $\delta>0$ and $u>\bar u$: $$C_1 u^{-\delta} \leq L(u) \leq C_2 u^\delta.$$
Call $M(u):= L(\frac 1u)$. M is slowly varying at zero. Potter's bound, for  small $u$, gives us:
\[
C_1 \left(\frac1u\right)^{-\delta} \leq L\left(\frac 1u\right) \leq C_2 \left(\frac 1u\right)^\delta,
\]
which we can rewrite as:
\[
C_1 u^{\delta} \leq M(u) \leq C_2 u^{-\delta} .
\]
In particular, for any $\alpha>0$, choose $\delta\in (0,\alpha)$. Then: \[0\leq t^\alpha M(t) \leq C_2 t^\alpha t^{-\delta}=C_2 t^{\alpha-\delta}\to 0
\]
as $t\to 0$. Henceforth, for all $\alpha>0$, $\lim_{t\to 0} t^\alpha M(t)=0$.
\end{remark}


For the first part of the proof of Theorem \ref{gizerotheo}, we are only left with the   term $A_n^{(2)}$, where we recall that  
\begin{multline*}
A_n^{(2)}=\si\DDint g^{(1)}(i\Delta_n-s)\,dW^{(1)}_s \DDint g^{(2)}(i\Delta_n-s)\,dW^{(2)}_s\\
- \int_\qaz^{i\Delta_n}\guno\gdue \rho_s\,ds.
\end{multline*}

Using that $dW_t\tot2=\rho_t\,dW\tot1+\sqrt{(1-\rho^2_t)}\,d\tilde W_t$, we will split it into two terms. I.e.~we will write $A_n^{(2)}=A_n^{(2,1)}+A_n^{(2,2)}$, where the first one is given by 
\[
A_n^{(2,1)}=\si\left(\DDint g^{(1)}(i\Delta_n-s)\,dW^{(1)}_s \DDint g^{(2)}(i\Delta_n-s)\sqrt{1-\rho_s^2}\,d\tilde W_s\right)
\]
and is dealt with in Lemma \ref{easylemma}.
The second one, whose convergence is more difficult to prove, is given by:
\begin{multline*}
A_n^{(2,2)}=\si\Biggl(\DDint g^{(1)}(i\Delta_n-s)\,dW^{(1)}_s \DDint g^{(2)}(i\Delta_n-s)\rho_s\,dW^{(1)}_s\\-\DDint(g^{(1)}(i\Delta_n-s))(g^{(2)}(i\Delta_n-s))\,\rho_s\,ds\Biggr).
\end{multline*}
A proof of its convergence can be found in Proposition \ref{finallyprop}.
We will start with the easier one:
\begin{lemma}\label{easylemma}
We have the following $L^2$-convergence:
$A_n^{(2,1)}\stackrel{L^2}{\to} 0$ as $n \to \infty$.
\end{lemma}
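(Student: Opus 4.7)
The plan is to show $\E[(A_n^{(2,1)})^2] \to 0$, essentially by exploiting the independence of $W^{(1)}$ and $\tilde W$ together with It\^o isometry. Write $A_n^{(2,1)} = \sum_{i=1}^n M_i N_i$, where
\[
M_i = \int_{(i-1)\Delta_n}^{i\Delta_n} g^{(1)}(i\Delta_n-s)\,dW^{(1)}_s, \qquad N_i = \int_{(i-1)\Delta_n}^{i\Delta_n} g^{(2)}(i\Delta_n-s)\sqrt{1-\rho_s^2}\,d\tilde W_s,
\]
and expand the square into diagonal and off-diagonal sums.

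For the cross terms with $j<i$, the idea is to condition on $\mathscr G_{(i-1)\Delta_n}$. The pair $(M_j,N_j)$ is $\mathscr G_{(i-1)\Delta_n}$-measurable, so it factors out. The remaining factor $\E_{\qaz}[M_i N_i]$ vanishes, because conditional on $\mathscr G_{(i-1)\Delta_n}$ the integrand of $N_i$ is deterministic (as $\rho$ is already known), while $M_i$ and $N_i$ are conditionally independent mean-zero Gaussians (using independence of $W^{(1)}$ and $\tilde W$ together with the fact that both stochastic integrals are taken over $[\qaz,\qa]$, so their driving increments are independent of $\mathscr G_{\qaz}$). Hence only the diagonal survives.

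For the diagonal term, conditioning on $\rho$ makes $M_i$ and $N_i$ conditionally independent Gaussians. By It\^o isometry and the deterministic bound $1-\rho_s^2\le 1$,
\[
\E[M_i^2 N_i^2] \;\le\; \int_{(i-1)\Delta_n}^{i\Delta_n}\!\bigl(g^{(1)}(i\Delta_n-s)\bigr)^2ds \cdot \int_{(i-1)\Delta_n}^{i\Delta_n}\!\bigl(g^{(2)}(i\Delta_n-s)\bigr)^2ds.
\]
After the change of variables $u=i\Delta_n-s$ and summing over $i$, this yields
\[
\E[(A_n^{(2,1)})^2] \;\le\; n\int_0^{\Delta_n}\!\bigl(g^{(1)}(u)\bigr)^2du \cdot \int_0^{\Delta_n}\!\bigl(g^{(2)}(u)\bigr)^2du.
\]

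To conclude, apply Assumption \ref{veryuseful} (equation \eqref{aiutoo}) to rewrite the right-hand side as
\[
\Delta_n^{2\delta^{(1)}+2\delta^{(2)}+1}\,L_1^{(1)}(\Delta_n)\,L_1^{(2)}(\Delta_n).
\]
Since $\delta^{(1)}+\delta^{(2)}\ge 0$ by Assumption \ref{gizeroassum}, the power of $\Delta_n$ is strictly positive, so Potter's bound (Remark \ref{potter}) absorbs the slowly varying factor $L_1^{(1)}L_1^{(2)}$ and the entire expression tends to zero. No step here is a serious obstacle: the only subtle point is the vanishing of the cross terms, and that follows cleanly from independence once the right $\sigma$-algebra is fixed.
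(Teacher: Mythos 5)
Your proof is correct and follows essentially the same route as the paper: expand the square, kill the off-diagonal terms by independence, and bound the diagonal by It\^o isometry, Assumption \ref{veryuseful} and the Potter bound. The only cosmetic difference is that you dispose of the cross terms by conditioning on $\mathscr G_{(i-1)\Delta_n}$, whereas the paper factors the expectation using the independence of $W^{(1)}$ from $(\tilde W,\rho)$ and the disjointness of the intervals; both arguments are valid and interchangeable here.
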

\begin{proof}
We get:
\begin{align}
&\E\left[\left(\si\DDint g^{(1)}(i\Delta_n-s)\,dW^{(1)}_s \DDint g^{(2)}(i\Delta_n-s)\sqrt{1-\rho_s^2}\,d\tilde W_s\right)^2\right] \\
= &\si\E\left[\left(\DDint g^{(1)}(i\Delta_n-s)\,dW^{(1)}_s \DDint g^{(2)}(i\Delta_n-s)\sqrt{1-\rho_s^2}\,d\tilde W_s\right)^2\right]+\label{5.3.0}\\
 & \sum_{i\ne j}\E\Bigg[\DDint g^{(1)}(i\Delta_n-s)\,dW^{(1)}_s \DDint g^{(2)}(i\Delta_n-s)\sqrt{1-\rho_s^2}\,d\tilde W_s  \times \label{5.3.1}\\
&\times \JJint g^{(1)}(j\Delta_n-s)\,d W^{(1)}_s \JJint g^{(2)}(j\Delta_n-s)\sqrt{1-\rho_s^2}\,d\tilde W_s  \Bigg] \nonumber.
\end{align}

Since $\mathscr F^{W\tot1}$ is independent of $\mathscr F^\rho \vee \mathscr F^{\tilde W} $, it is easy to deal with the term in \eqref{5.3.0}:
\begin{align}
&\si\E\left[\left(\DDint g^{(1)}(i\Delta_n-s)\,dW^{(1)}_s \DDint g^{(2)}(i\Delta_n-s)\sqrt{1-\rho_s^2}\,d\tilde W_s\right)^2\right]\nonumber\\
=&\si\E\left[\left(\DDint g^{(1)}(i\Delta_n-s)\,dW^{(1)}_s\right)^2\right] \E\left[\left(\DDint g^{(2)}(i\Delta_n-s)\sqrt{1-\rho_s^2}\,d\tilde W_s\right)^2\right]\\
=&\si \int_0^{\Delta_n} \left(g\tot1(s)\right)^2\,ds  \int_0^{\Delta_n}\left(g\tot2 (s)\right)^2\left(1-\rho^2_{i\Delta_n-s}\right)\,ds \\
\leq &  2 n\int_0^{\Delta_n} \left(g\tot1(s)\right)^2\,ds  \int_0^{\Delta_n}\left(g\tot2 (s)\right)^2  \,ds =  C\Delta_n^{2(\delta\tot1+\delta\tot2+1)} L_1\tot 1(\Delta_n)L\tot 2_1(\Delta_n) 
 n  \to 0, \label{5.3.2} 
\end{align}
for a constant $C>0$, where the asymptotics on line \eqref{5.3.2} follow from Assumption \ref{veryuseful} and Remark \ref{potter}.
Lastly, the summand in \eqref{5.3.1} becomes:
\[
\begin{split}
&\sum_{i\ne j}\E\Bigg[\DDint g^{(1)}(i\Delta_n-s)\,dW^{(1)}_s \DDint g^{(2)}(i\Delta_n-s)\sqrt{1-\rho_s^2}\,d\tilde W_s  \times \\
&\times \JJint g^{(1)}(j\Delta_n-s)\,d W^{(1)}_s \JJint g^{(2)}(j\Delta_n-s)\sqrt{1-\rho_s^2}\,d\tilde W_s  \Bigg]\\
=&\sum_{i\ne j}\E\Bigg[ \DDint g^{(2)}(i\Delta_n-s)\sqrt{1-\rho_s^2}\,d\tilde W_s  \JJint g^{(2)}(j\Delta_n-s)\sqrt{1-\rho_s^2}\,d\tilde W_s\Bigg] \times\\
& \E\Bigg[\DDint g^{(1)}(i\Delta_n-s)\,dW^{(1)}_s \times \JJint g^{(1)}(j\Delta_n-s)\,d W^{(1)}_s \Bigg]=0,
\end{split}
\]
since $((i-1)\Delta_n,i\Delta_n) \cap ((j-1)\Delta_n,j\Delta_n)=\emptyset$.
\end{proof}
Now we tackle the difficult term:
\begin{proposition}
\label{finallyprop}
We have the following convergence in probability:
$A_n^{(2,2)}\stackrel{\mathbb{P}}{\to} 0$ as $n \to \infty$.

\end{proposition}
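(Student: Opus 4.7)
The plan is to prove the stronger statement of $L^2$ convergence of $A_n^{(2,2)}$ (which implies convergence in probability), by exploiting the martingale difference structure of the summands with respect to the filtration $\{\mathscr G_{i\Delta_n}\}_{i\geq 0}$.

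First, I would verify that each summand
\[
M_i := \int_{(i-1)\Delta_n}^{i\Delta_n}\!\! g^{(1)}(i\Delta_n-s)\,dW_s^{(1)}\int_{(i-1)\Delta_n}^{i\Delta_n}\!\! g^{(2)}(i\Delta_n-s)\rho_s\,dW_s^{(1)} - \int_{(i-1)\Delta_n}^{i\Delta_n}\!\! g^{(1)}(i\Delta_n-s)g^{(2)}(i\Delta_n-s)\rho_s\,ds
\]
satisfies $\E[M_i\mid\mathscr G_{(i-1)\Delta_n}]=0$. This is exactly the computation in Lemma \ref{unkownlemma} (the second identity), once we note that the $W^{(1)}$-increments on $((i-1)\Delta_n,i\Delta_n]$ are independent of $\mathscr G_{(i-1)\Delta_n}$ and $\rho$ is $\mathscr G_{(i-1)\Delta_n}$-measurable. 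Consequently $\{M_i\}$ is a martingale difference sequence, the cross terms $\E[M_iM_j]$ vanish for $i\neq j$, and
\[
\E\bigl[(A_n^{(2,2)})^2\bigr]=\sum_{i=1}^n \E[M_i^2].
\]

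Next I would bound $\E[M_i^2]$ by first conditioning on $\sigma(\rho)$ and applying Isserlis' theorem, since conditional on $\rho$ the pair
\[
X_i:=\int_{(i-1)\Delta_n}^{i\Delta_n}\!\! g^{(1)}(i\Delta_n-s)\,dW_s^{(1)},\qquad Y_i:=\int_{(i-1)\Delta_n}^{i\Delta_n}\!\! g^{(2)}(i\Delta_n-s)\rho_s\,dW_s^{(1)}
\]
is jointly centred Gaussian. Writing $a_i=\Var(X_i\mid\rho)$, $b_i=\Var(Y_i\mid\rho)$, $c_i=\Cov(X_i,Y_i\mid\rho)$, Isserlis gives $\E[X_i^2Y_i^2\mid\rho]=a_ib_i+2c_i^2$, hence $\E[M_i^2\mid\rho]=a_ib_i+c_i^2$. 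Using $|\rho_s|\leq 1$ and changing variables yields, uniformly in $\rho$,
\[
a_ib_i\leq \int_0^{\Delta_n}(g^{(1)}(s))^2\,ds\int_0^{\Delta_n}(g^{(2)}(s))^2\,ds,\qquad c_i^2\leq\Big(\int_0^{\Delta_n} g^{(1)}(s)g^{(2)}(s)\,ds\Big)^2.
\]

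Summing over $i=1,\dots,n$ and invoking Assumption \ref{veryuseful} produces
\[
n\cdot a_ib_i\;\lesssim\;\Delta_n^{2(\delta^{(1)}+\delta^{(2)})+1}L_1^{(1)}(\Delta_n)L_1^{(2)}(\Delta_n),
\]
which vanishes as $n\to\infty$ since $\delta^{(1)}+\delta^{(2)}\geq 0$ by Assumption \ref{gizeroassum} and the product of slowly varying functions is absorbed by Potter's bound (Remark \ref{potter}). For the $c_i^2$ contribution I would use that $g^{(1)}(0^+)g^{(2)}(0^+)<\infty$ forces $g^{(1)}g^{(2)}$ to be bounded near $0$, so $\int_0^{\Delta_n}g^{(1)}(s)g^{(2)}(s)\,ds=O(\Delta_n)$ and therefore $n\,c_i^2=O(\Delta_n)\to 0$. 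Combining the two bounds shows $\E[(A_n^{(2,2)})^2]\to 0$ and yields the claim.

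The main obstacle is the $a_ib_i$ bound when $\delta^{(1)}+\delta^{(2)}=0$: the exponent in the controlling factor $\Delta_n^{2(\delta^{(1)}+\delta^{(2)})+1}$ is exactly $1$, so the slowly varying prefactor $L_1^{(1)}L_1^{(2)}$ must not blow up too fast. This is where Potter's bound is essential: for any $\varepsilon>0$ the product is dominated by $\Delta_n^{-\varepsilon}$, leaving $\Delta_n^{1-\varepsilon}\to 0$. (Note that we do not need the additional condition on $L_2^{(1)}L_2^{(2)}$ from Assumption \ref{gizeroassum} here; that hypothesis will matter instead in controlling $B_n$.)
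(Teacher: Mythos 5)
Your proposal is correct, but it takes a genuinely different route from the paper. The paper proves Proposition \ref{finallyprop} by a blocking argument: it first replaces $\rho_s$ by $\rho_{(i-1)\Delta_n}$ on each block, controlling the substitution error in the Lebesgue part (Lemma \ref{lemmata1}) and in the Brownian part (Lemma \ref{lemmata2}) via the H\"older continuity of $\rho$ (Assumption \ref{As_rho}), and then treats the frozen sum by pulling $\rho_{(i-1)\Delta_n}$ out of the stochastic integrals, killing the cross terms through the independence of $\mathscr F^\rho$ and $\mathscr F^{W^{(1)}}$ together with independent increments, and bounding the squares by Lemma \ref{importantlemma} and Cauchy--Schwarz, which yields the same $\Delta_n^{2(\delta^{(1)}+\delta^{(2)})+1}L_1^{(1)}(\Delta_n)L_1^{(2)}(\Delta_n)$ bound you obtain. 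You instead observe that the subtracted term $\int_{(i-1)\Delta_n}^{i\Delta_n} g^{(1)}g^{(2)}\rho_s\,ds$ is exactly the $\mathscr G_{(i-1)\Delta_n}$-compensator of the product of the two stochastic integrals (this is the content of the proof of Lemma \ref{unkownlemma}), so the summands form a martingale difference array, the off-diagonal terms vanish identically, and $\E[(A_n^{(2,2)})^2]$ reduces to the sum of the diagonal second moments, which you compute by conditional Isserlis (in the paper's notation, Corollary \ref{corollary5} with $K=\rho$) and bound uniformly using only $|\rho|\le 1$; your handling of the covariance term via local boundedness of $g^{(1)}g^{(2)}$ near zero is valid under Assumption \ref{gizeroassum} (and could equally be replaced by Cauchy--Schwarz, as in \eqref{smartequality}). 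What the two approaches buy: yours is shorter, proves $L^2$ convergence of $A_n^{(2,2)}$ directly, and dispenses with Assumption \ref{As_rho} for this particular term, making clear that only boundedness of $\rho$ and its independence from the Brownian measures is needed here; the paper's blocking technique is less parsimonious for this proposition but its ingredients are reused elsewhere (Lemma \ref{lemmata1} reappears in the proof of Proposition \ref{ndvrhdvr}, and the freezing idea is the template for the volatility-modulated case). Your closing remark is also accurate: the condition $\lim_{x\to 0^+}L_2^{(1)}(x)L_2^{(2)}(x)=0$ in Assumption \ref{gizeroassum} plays no role here and is only needed for the term $B_n^{(1)}$, where the bound is sharp in the sense noted after Lemma \ref{thiswasaproblem}.
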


We will use a \emph{blocking} technique, which consists in freezing the correlation process in each of the intervals $((i-1)\Delta_n, i\Delta_n)$. Exploiting  the H\"older continuity assumption on the paths of $\rho$, we will show that this ``frozen'' quantity is a good substitute of the one we have: We need to show that the difference converges to zero (at least) in probability.
The following lemma shows that there is essentially no harm in doing so for the Lebesgue integral part:
\begin{lemma} 
\label{lemmata1}
We have the following almost sure convergence:
\[\si \DDint g^{(1)}(i\Delta_n-s)g^{(2)}(i\Delta_n-s) \left(\rho_s-\rho_{(i-1)\Delta_n}\right)\,ds \stackrel{a.s.}{\to} 0, \quad \text{ as } n \to \infty.\]
\end{lemma}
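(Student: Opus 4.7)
The plan is to exploit the H\"older regularity of $\rho$ (Assumption \ref{holderassum}) to extract the increment $\rho_s - \rho_{(i-1)\Delta_n}$ in absolute value, and then use Assumption \ref{gizeroassum}, which ensures $g^{(1)}(0^+)g^{(2)}(0^+)<\infty$, to bound the remaining kernel integral pathwise.

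By Assumption \ref{holderassum}, for almost every $\omega$ there exists a finite random constant $K(\omega)$ such that $|\rho_s(\omega)-\rho_u(\omega)|\leq K(\omega)|s-u|^\alpha$ for all $s,u$ in any fixed compact interval. In particular, for $s\in[(i-1)\Delta_n,i\Delta_n]$ we have $|\rho_s-\rho_{(i-1)\Delta_n}|\leq K(\omega)\Delta_n^\alpha$ uniformly in $i$. Pulling this bound outside the integral and then changing variable $u=i\Delta_n-s$ in each summand yields
\[
\left|\si\DDint g^{(1)}(i\Delta_n-s)g^{(2)}(i\Delta_n-s)\bigl(\rho_s-\rho_{(i-1)\Delta_n}\bigr)\,ds\right|\leq K(\omega)\Delta_n^\alpha\cdot n\int_0^{\Delta_n}g^{(1)}(u)g^{(2)}(u)\,du.
\]

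Now I invoke Assumption \ref{gizeroassum}: the product $g^{(1)}(u)g^{(2)}(u)$ admits a finite limit as $u\to 0^+$, and by the continuity of each $g^{(j)}$ on $(0,\infty)$ (Assumption \ref{squareassumm}) the product is therefore bounded by some finite $M$ on a neighbourhood $(0,\eta]$ of the origin. Hence for $n$ large enough that $\Delta_n<\eta$,
\[
n\int_0^{\Delta_n}g^{(1)}(u)g^{(2)}(u)\,du\leq nM\Delta_n=M,
\]
using $n\Delta_n=1$ (since we have reduced to $T=1$). Combining the two estimates gives the pathwise bound $K(\omega)M\Delta_n^\alpha$, which tends to $0$ as $n\to\infty$ for almost every $\omega$.

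There is no real obstacle here; the one point worth flagging is that the control we need is on the \emph{product} $g^{(1)}g^{(2)}$ near zero, not on the individual kernels (which may well diverge). Assumption \ref{gizeroassum} is precisely tailored to supply this, and once used the remainder is a deterministic envelope argument.
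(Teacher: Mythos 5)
Your argument is correct, and its first half coincides with the paper's: both proofs pull out the H\"older bound $|\rho_s-\rho_{(i-1)\Delta_n}|\leq K\Delta_n^{\alpha}$ and reduce the problem to showing $n\Delta_n^{\alpha}\int_0^{\Delta_n}g^{(1)}(u)g^{(2)}(u)\,du\to 0$. Where you diverge is in the final estimate of $\int_0^{\Delta_n}g^{(1)}g^{(2)}\,du$. The paper applies the Cauchy--Schwarz inequality and then Assumption \ref{veryuseful}, obtaining the bound $C\Delta_n^{\delta\tot1+\delta\tot2+\alpha}\sqrt{L_1\tot1(\Delta_n)L_1\tot2(\Delta_n)}$, which tends to zero via Potter's bound (Remark \ref{potter}) because $\delta\tot1+\delta\tot2\geq 0$. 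You instead use the finite-limit part of Assumption \ref{gizeroassum} to bound the product $g^{(1)}g^{(2)}$ by a constant $M$ on a neighbourhood of $0$, giving $n\int_0^{\Delta_n}g^{(1)}g^{(2)}\,du\leq M$ and the cleaner pathwise bound $K(\omega)M\Delta_n^{\alpha}$. Both are legitimate within the scope of Theorem \ref{gizerotheo}, where Assumption \ref{gizeroassum} is in force; your route is more elementary (no regular variation or Potter bounds needed), while the paper's route is slightly more robust in that it only needs $\delta\tot1+\delta\tot2+\alpha>0$ and the regular-variation structure, and so would survive in settings where the product $g^{(1)}g^{(2)}$ blows up at the origin (as in Scenario 2), where your boundedness argument would not apply. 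One small presentational point: you correctly keep the H\"older constant $K(\omega)$ random, which is all that is needed for the stated almost sure convergence.
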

\begin{proof}
We find that:
\[
\begin{split}
&\left| \si \DDint g^{(1)}(i\Delta_n-s)g^{(2)}(i\Delta_n-s) \left(\rho_s-\rho_{(i-1)\Delta_n} \right)\,ds\right| \\
&\leq \si \DDint g^{(1)}(i\Delta_n-s)g^{(2)}(i\Delta_n-s) \left|\rho_s-\rho_{(i-1)\Delta_n} \right|\,ds \\
&\leq \Delta_n^\alpha n \int_0^ {\Delta_n} g^{(1)}(s)g^{(2)}(s)\,ds \leq \Delta_n^\alpha n \sqrt{\int_0^ {\Delta_n}\left(g\tot1(s)\right)^2\,ds} \sqrt{\int_0^ {\Delta_n}\left(g\tot2(s)\right)^2\,ds} \\
&=C(\Delta_n^{\delta\tot1+\delta\tot2+\alpha}) \sqrt{L_1\tot1(\Delta_n)L_1\tot2(\Delta_n)} \to 0.
\end{split}
\]
\end{proof}
For the Brownian part, the situation is similar, but we cannot ask for a.s.~convergence. 
\begin{lemma}
\label{lemmata2}
We have the following $L^2$-convergence:
\[
\si \DDint g^{(1)}(i\Delta_n-s)\,dW^{(1)}_s \DDint g^{(2)}(i\Delta_n-s)\left(\rho_s-\rho_{(i-1)\Delta_n}\right)\,dW^{(1)}_s\stackrel{L^2}{\to} 0, \quad \text{ as } n \to \infty.
\]
\end{lemma}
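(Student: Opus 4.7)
Write $U_i := \DDint g^{(1)}(i\Delta_n-s)\,dW^{(1)}_s$ and $V_i := \DDint g^{(2)}(i\Delta_n-s)(\rho_s-\rho_{(i-1)\Delta_n})\,dW^{(1)}_s$, so the sum whose $L^2$-norm we need to control is $\si U_iV_i$. Expanding the square,
\[
\E\left[\Bigl(\si U_iV_i\Bigr)^2\right] = \si\E[U_i^2V_i^2] + 2\sum_{1\le i<j\le n}\E[U_iV_iU_jV_j],
\]
and I will show that both the diagonal and the off-diagonal pieces vanish as $n\to\infty$. The crucial observation throughout is that, conditionally on $\mathscr G_{(i-1)\Delta_n}$, the process $\rho$ is deterministic, so $(U_i,V_i)$ is a centred Gaussian pair built from It\^o integrals against the same Brownian measure $W^{(1)}$.

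For the diagonal contribution, Isserlis' formula together with the Cauchy--Schwarz inequality yield
\[
\E[U_i^2V_i^2 \mid \mathscr G_{(i-1)\Delta_n}] \le 3\,\E[U_i^2\mid \mathscr G_{(i-1)\Delta_n}]\,\E[V_i^2\mid \mathscr G_{(i-1)\Delta_n}].
\]
The first factor equals $\int_0^{\Delta_n}(g^{(1)}(s))^2\,ds$, while the second, using Assumption~\ref{holderassum}, is at most $C\Delta_n^{2\alpha}\int_0^{\Delta_n}(g^{(2)}(s))^2\,ds$. By Assumption~\ref{veryuseful} the diagonal sum is therefore bounded by $C\,\Delta_n^{2\alpha+2\delta^{(1)}+2\delta^{(2)}+1}L_1^{(1)}(\Delta_n)L_1^{(2)}(\Delta_n)$, which tends to zero since $\delta^{(1)}+\delta^{(2)}\ge 0$ by Assumption~\ref{gizeroassum} and $\Delta_n^{2\alpha}$ dominates the slowly varying factors via Potter's bound (Remark~\ref{potter}).

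For the off-diagonal terms, take $i<j$ and note that $U_iV_i$ is $\mathscr G_{(j-1)\Delta_n}$-measurable, because the underlying Brownian increments live on $((i-1)\Delta_n,i\Delta_n]\subset(-\infty,(j-1)\Delta_n]$ and $\rho$ is fully contained in $\mathscr G_{(j-1)\Delta_n}$. The tower property then gives
\[
|\E[U_iV_iU_jV_j]| \le \E\bigl[|U_iV_i|\cdot\bigl|\E[U_jV_j\mid \mathscr G_{(j-1)\Delta_n}]\bigr|\bigr].
\]
By the It\^o isometry, the inner conditional expectation equals $\int_{(j-1)\Delta_n}^{j\Delta_n}g^{(1)}(j\Delta_n-s)g^{(2)}(j\Delta_n-s)(\rho_s-\rho_{(j-1)\Delta_n})\,ds$, which H\"older continuity of $\rho$ combined with Cauchy--Schwarz on $\int_0^{\Delta_n}g^{(1)}g^{(2)}$ bounds by $C\,\Delta_n^{\alpha+\delta^{(1)}+\delta^{(2)}+1}\sqrt{L_1^{(1)}(\Delta_n)L_1^{(2)}(\Delta_n)}$. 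A further Cauchy--Schwarz gives $\E[|U_iV_i|]\le\sqrt{\E[U_i^2]\,\E[V_i^2]}$, which by the same argument used for the diagonal is of the same order $C\,\Delta_n^{\alpha+\delta^{(1)}+\delta^{(2)}+1}\sqrt{L_1^{(1)}(\Delta_n)L_1^{(2)}(\Delta_n)}$. Multiplying the two estimates and summing over the $O(n^2)$ pairs produces an overall bound of order $\Delta_n^{2\alpha+2(\delta^{(1)}+\delta^{(2)})}L_1^{(1)}(\Delta_n)L_1^{(2)}(\Delta_n)$, which again vanishes by Potter's bound.

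The principal obstacle, and what distinguishes this lemma from Lemma~\ref{easylemma}, is that $U_i$ and $V_i$ are driven by the \emph{same} Brownian measure $W^{(1)}$, so the off-diagonal terms do not collapse to zero by simple orthogonality. Instead one must extract a factor $\Delta_n^\alpha$ from $\rho_s-\rho_{(j-1)\Delta_n}$ in every cross-product and then track the remaining powers of $\Delta_n$ against the slowly varying contributions coming from Assumption~\ref{veryuseful}; the condition $\delta^{(1)}+\delta^{(2)}\ge 0$ built into Assumption~\ref{gizeroassum} is exactly what makes the bookkeeping close.
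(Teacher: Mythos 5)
Your proof is correct and rests on the same key ingredients as the paper's: per-increment conditional Gaussian moment bounds (your Isserlis step is exactly Corollary \ref{corollary5} plus Cauchy--Schwarz), the extraction of a factor $\Delta_n^{\alpha}$ from $\rho_s-\rho_{(i-1)\Delta_n}$ via Assumption \ref{As_rho}, and Assumption \ref{veryuseful} combined with Potter's bound (Remark \ref{potter}). The one genuine difference is how the second moment is organised: the paper never touches cross terms, since it starts from the crude inequality $\E[ (\si X_i)^2]\le n\si\E[X_i^2]$, conditions on $\mathscr F^{\rho}$, applies Corollary \ref{corollary5}, and lands directly on the bound $\Delta_n^{2(\delta^{(1)}+\delta^{(2)}+\alpha)}L_1^{(1)}(\Delta_n)L_1^{(2)}(\Delta_n)$; you instead expand the square and control the $O(n^2)$ off-diagonal terms individually via the tower property and the conditional isometry $\E[U_jV_j\mid\mathscr G_{(j-1)\Delta_n}]=\int_{(j-1)\Delta_n}^{j\Delta_n}g^{(1)}(j\Delta_n-s)\,g^{(2)}(j\Delta_n-s)\left(\rho_s-\rho_{(j-1)\Delta_n}\right)ds$. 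Both routes produce the same overall rate $\Delta_n^{2(\alpha+\delta^{(1)}+\delta^{(2)})}$ up to slowly varying factors (your diagonal piece is one power of $\Delta_n$ better, but the off-diagonal piece dominates, so nothing is gained), and under the standing Assumption \ref{gizeroassum} together with $\alpha>0$ the exponent is strictly positive, so the conclusion follows in either case. Two small points of rigour: the conditional isometry you invoke involves a random integrand (a functional of $\rho$), so it should be justified exactly as in Lemma \ref{unkownlemma} (conditioning on $\mathscr F^{\rho}$ and using independence, or the conditional characteristic function argument) rather than quoted as a bare It\^o isometry; and, like the paper, you treat the H\"older constant of $\rho$ as deterministic although Assumption \ref{As_rho} only provides an almost-sure, path-dependent constant --- harmless at this level, but a localisation argument would be needed to make that step fully airtight.
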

\begin{proof}
Using the inequality $\E\left[ \left(\si X_i\right)^2\right] \leq n \si \E\left[X_i^2\right]$, we obtain:
\begin{align}
&\nonumber\E\left[\left(\si \DDint g^{(1)}(i\Delta_n-s)\,dW^{(1)}_s \DDint g^{(2)}(i\Delta_n-s)\left(\rho_s-\rho_{(i-1)\Delta_n}\right)\,dW^{(1)}_s\right)^2\right]\\
\nonumber\leq& n \si \E \left[ \E\left[ \left( \DDint g^{(1)}(i\Delta_n-s)\,dW^{(1)}_s
\right. \right.\right.
\\
& \left. \left. \left. \qquad 
 \DDint g^{(2)}(i\Delta_n-s)\left(\rho_s-\rho_{(i-1)\Delta_n}\right)\,dW^{(1)}_s\right)^2\Bigg|\mathscr F^\rho\right]\right] \nonumber\\
=&\nonumber n \si \E \Bigg[ \DDint \left(g\tot1 (i\Delta_n-s)\right)^2\,ds\DDint \left(g\tot2(i\Delta_n-s)\right)^2\left(\rho_s-\rho_{i\Delta_n-s}\right)^2\,ds \\
+&2\left(\DDint g\tot1 (i\Delta_n-s)g\tot2 (i\Delta_n-s) \left(\rho_s-\rho_{i\Delta_n-s}\right)\,ds\right)^2\Bigg], \label{5.6.1}
\end{align}
where the last equality \eqref{5.6.1}  follows from an application of Corollary \ref{corollary5} in the appendix.
Furthermore, by H\"older's inequality we have
\[
\begin{split}
&\left(\DDint g\tot1 (i\Delta_n-s)g\tot2 (i\Delta_n-s) \left|\rho_s-\rho_{i\Delta_n-s}\right|\,ds\right)^2\\
\leq& \left( \DDint \left(g\tot1 (i\Delta_n-s)\right)^2\,ds\DDint \left(g\tot2(i\Delta_n-s)\right)^2\left(\rho_s-\rho_{i\Delta_n-s}\right)^2\,ds\right)^2.
\end{split}
\]
Finally, using that $\abs{\rho_s-\rho_{(i-1)\Delta_n}}\leq C\Delta_n^{\alpha}$, for some positive constant $C$:
\[
\begin{split}
&3Cn\si \E \Bigg[ \DDint \left(g\tot1 (i\Delta_n-s)\right)^2\,ds\DDint \left(g\tot2(i\Delta_n-s)\right)^2\left(\rho_s-\rho_{i\Delta_n-s}\right)^2\,ds\Bigg]\\
\leq& n^2\Delta_n^{2\alpha} \int_0^{\Delta_n} \left(g\tot1(s)\right)^2\,ds\int_0^{\Delta_n} \left(g\tot2(s)\right)^2\,ds = \Delta_n^{2(\delta\tot1+\delta\tot2+\alpha)}L\tot 1_1(\Delta_n)L\tot 2_1(\Delta_n) \to 0,
\end{split}
\]
where the asymptotic bound follows again from Assumption \ref{veryuseful} and Remark \ref{potter}.
\end{proof}

\begin{proof}[Proof of Proposition \ref{finallyprop}]
We can now prove the statement of Proposition \ref{finallyprop}. In the light of Lemmas \ref{lemmata1} and \ref{lemmata2}, it will be sufficient to show that:
\begin{multline}
\label{takevariableoout}
\si  \Bigg(\DDint g\tot1 (i\Delta_n-s)\,dW\tot1_s \DDint g\tot2(i\Delta_n-s)\rho_{(i-1)\Delta_n}\,dW\tot1_s\\ - \DDint g\tot1 (i\Delta_n-s)g\tot2 (i\Delta_n-s)\rho_{(i-1)\Delta_n}\,ds\Biggr)
\end{multline}
converges to zero. We will show that it does so in $L^2$.
 Since we can take out the random variable $\rho_{(i-1)\Delta_n}$ from the stochastic integrals in \eqref{takevariableoout}, it remains to prove that:
\begin{multline}
\label{finalone}
\E\Bigg[\Bigg(\si \rho_{(i-1)\Delta_n} \Bigg(\DDint g\tot1 (i\Delta_n-s)\,dW\tot1_s \DDint g\tot2(i\Delta_n-s)\,dW\tot1_s \\- \DDint g\tot1 (i\Delta_n-s)g\tot2 (i\Delta_n-s)\,ds\Bigg)\Bigg)^2\Bigg]
\end{multline}
converges to zero.
For ease of notation, we will suppress the arguments of the functions $g\tot 1, g\tot 2$ in parts of the present proof.
By expanding \eqref{finalone} we obtain two terms: the sum of the squares and the sum of the cross products. We start with the latter:
\begin{multline}
\sum_{\substack{i,j=1\\i\ne j}}^n \E\Bigg[\rho_{(i-1)\Delta_n}\rho_{(j-1)\Delta_n} \times\Bigg(\DDint g\tot1 \,dW\tot1_s\DDint g\tot2 \,dW\tot1_s-\DDint g\tot1 g\tot2 \,ds\Bigg)\\\Bigg(\JJint g\tot1 \,dW\tot1_s\JJint g\tot2 \,dW\tot1_s-\JJint g\tot1 g\tot2\,ds\Bigg)\Bigg].
\end{multline}
The expectation splits thanks to independence between $\mathscr F^\rho$ and $\mathscr F^{W^{(1)}}$.
Now  suppose $i<j$; then by the independent increments property of the integrals:
\begin{align*}
&\E\Bigg[\Bigg(\DDint g\tot1 \,dW\tot1_s\DDint g\tot2 \,dW\tot1_s-\DDint g\tot1 g\tot2 \,ds\Bigg)\times\\ 
&\Bigg(\JJint g\tot1 \,dW\tot1_s\JJint g\tot2 \,dW\tot1_s-\JJint g\tot1 g\tot2\,ds\Bigg)\Bigg]\\
=&\E\Bigg[\Bigg(\DDint g\tot1 \,dW\tot1_s\DDint g\tot2 \,dW\tot1_s-\DDint g\tot1 g\tot2 \,ds\Bigg)\Bigg]\times\\
&\E\Bigg[ \Bigg(\JJint g\tot1 \,dW\tot1_s\JJint g\tot2 \,dW\tot1_s-\JJint g\tot1 g\tot2\,ds\Bigg)\Bigg]=0.
\end{align*}
Hence we are only left with considering the sum of squares:
\begin{align}\begin{split}
&\si \E\left[\rho_{(i-1)\Delta_n}^2\Bigg(\DDint g\tot1\,dW\tot1_s\DDint g\tot2\,dW\tot1_s-\DDint g\tot1g\tot2\,ds\Bigg)^2\right] 
\\
=&\si\E\left[\rho_{(i-1)\Delta_n}^2\right]\\
& \qquad \qquad \cdot\E\left[\Bigg(\DDint g\tot1\,dW\tot1_s\DDint g\tot2\,dW\tot1_s-\DDint \label{finalsum} g\tot1g\tot2\,ds\Bigg)^2\right].
\end{split}
\end{align}
Now:
\begin{align}
&\E\left[\left(\DDint g\tot1\,dW\tot1_s\DDint g\tot2\,dW\tot1_s-\DDint g\tot1g\tot2\,ds\right)^2\right]\nonumber\\\nonumber
&=\E\left[\left(\DDint g\tot1\,dW\tot1_s\DDint g\tot2\,dW\tot1_s\right)^2\right]-\left(\DDint g\tot1 g\tot2\,ds\right)^2\\
&=\DDint \left(g\tot1\right)^2\,ds \DDint \left(g\tot2\right)^2\,ds+\left(\DDint g\tot1 g\tot2\,ds\right)^2 \label{smartequality} \\
&\leq  2 \DDint \left(g\tot1\right)^2(s)\,ds \DDint \left(g\tot2\right)^2(s)\,ds
\\
&=C\Delta_n^{2(\delta\tot1+\delta\tot2+1)}L\tot1_1(\Delta_n)L\tot2_1(\Delta_n), \nonumber 
\end{align}
where \eqref{smartequality} is an application of Lemma \ref{important}. Thus,  \eqref{finalsum} finally goes to zero:
\begin{align*}
C\Delta_n^{2(\delta\tot1+\delta\tot2+1)}L\tot1_1(\Delta_n)L\tot2_1(\Delta_n) \si \E\left[\rho_{(i-1)\Delta_n}^2\right] &\leq C \Delta_n^{2(\delta\tot1+\delta\tot2)+1} L\tot1_1(\Delta_n)L\tot2_1(\Delta_n)\\
& \to 0,
\end{align*}
for some positive constant $C$.
The proof of Proposition \ref{finallyprop} is now complete. \end{proof}

\subsubsection{
Convergence of $B_n$} 

What remains to be shown is the convergence of the term $B_n$. Recall that 
\begin{equation}
\label{whatsleft}
B_n=
\si \E_\qaz \left[ \DD Y\tot1 \DD Y^{(2)} \right] - g^{(1)}(0^+)g^{(2)}(0^+)\int_0^1\rho_s\,ds, 
\end{equation}
where we recall the following expression, which we derived in \eqref{conditionalsplit} and \eqref{ladshgaoe}:
\[
\begin{split}
\si \E_\qaz \left[ \DD Y\tot1 \DD Y^{(2)} \right] = &\si  \int_{-\infty}^\qaz \deltag{1} dW^{(1)} \int_{-\infty}^\qaz \deltag{2}dW^{(2)} +\\& \si \DDint \guno \gdue \rho_s\,ds,
\end{split}
\]
with $\DD g^{(j)}_s:= g^{(j)}\left(i\Delta_n-s\right)-g^{(j)}\left(\qaz-s\right)$.
Hence we can write $B_n=B_n^{(1)}+B_n^{(2)}$, where 
\[
 B_n^{(1)}=\si  \int_{-\infty}^\qaz \DD g_s\tot1\,dW\tot1_s \int_{-\infty}^\qaz \DD g_s\tot2\,dW^{(2)}_s,
\]
and 
\[
B_n^{(2)}=\si \DDint \guno\gdue\rho_s\,ds -g^{(1)}(0^+)g^{(2)}(0^+) \int_0^1 \rho_s\,ds.
\]
\begin{lemma}
\label{thiswasaproblem}
We have the following $L^2$-convergence:
$B_n^{(1)}\stackrel{L^2}{\to} 0$ as $n \to \infty$.

\end{lemma}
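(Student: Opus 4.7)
The plan is to compute $\E[(B_n^{(1)})^2]$ directly by conditioning on the correlation process. Setting $U_i:=\int_{-\infty}^{\qaz}\deltag{1}\,dW^{(1)}_s$ and $V_i:=\int_{-\infty}^{\qaz}\deltag{2}\,dW^{(2)}_s$ so that $B_n^{(1)}=\si U_iV_i$, and using that $\rho$ is independent of $(W^{(1)},\tilde W)$, conditional on $\mathscr F^{\rho}:=\sigma(\rho_s:s\in\R)$ the quadruple $(U_i,V_i,U_j,V_j)$ is centered and jointly Gaussian. Isserlis' theorem then gives
\[
\E\!\left[U_iV_iU_jV_j\mid\mathscr F^{\rho}\right] = \E[U_iV_i\mid\mathscr F^{\rho}]\E[U_jV_j\mid\mathscr F^{\rho}] + \E[U_iU_j\mid\mathscr F^{\rho}]\E[V_iV_j\mid\mathscr F^{\rho}] + \E[U_iV_j\mid\mathscr F^{\rho}]\E[V_iU_j\mid\mathscr F^{\rho}],
\]
so that, after taking expectations and summing over $i,j$, $\E[(B_n^{(1)})^2]$ splits into three pairing sums to be estimated separately.

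A direct computation in the style of Lemma \ref{unkownlemma}, using the It\^{o} isometry and the representation $dW^{(2)}_s=\rho_s\,dW^{(1)}_s+\sqrt{1-\rho_s^2}\,d\tilde W_s$, shows that $\E[U_iU_j\mid\mathscr F^{\rho}]$ is in fact deterministic and that in $\E[V_iV_j\mid\mathscr F^{\rho}]$ the $\rho_s^2$ contribution from the $dW^{(1)}$-leg and the $1-\rho_s^2$ contribution from the $d\tilde W$-leg cancel, so that this covariance is deterministic too. The two mixed objects $\E[U_iV_i\mid\mathscr F^{\rho}]$ and $\E[U_iV_j\mid\mathscr F^{\rho}]$ carry a single factor of $\rho_s$ inside the isometry integrand and can therefore be dominated pathwise via $|\rho_s|\leq 1$. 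In all four cases Cauchy--Schwarz delivers $|\E[\,\cdot\mid\mathscr F^{\rho}]|\le\sqrt{\E[U_i^2]\,\E[V_j^2]}$ with the appropriate matching of indices.

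A further application of Cauchy--Schwarz in the $i,j$ indices, combined with stationarity in $i$, now bounds each of the three pairing sums by $n^2\,\E[U_1^2]\,\E[V_1^2]$. Writing $\E[U_1^2]=\int_0^\infty(g^{(1)}(s+\Delta_n)-g^{(1)}(s))^2\,ds$ and splitting the integral at $b$, Remark \ref{marvellous} contributes $O(\Delta_n^2)$ on $[b,\infty)$ while Assumption \ref{veryuseful} gives $\Delta_n^{2\delta^{(1)}+1}L_2^{(1)}(\Delta_n)$ on $[0,b]$, and the same holds for $\E[V_1^2]$. Multiplying the two estimates yields
\[
\E\!\left[(B_n^{(1)})^2\right] \le C\,\Delta_n^{2(\delta^{(1)}+\delta^{(2)})}L_2^{(1)}(\Delta_n)L_2^{(2)}(\Delta_n) + o(1)
\]
for some $C>0$, and Assumption \ref{gizeroassum} closes the argument: when $\delta^{(1)}+\delta^{(2)}>0$ the leading term vanishes by Potter's bound (Remark \ref{potter}), and when $\delta^{(1)}+\delta^{(2)}=0$ it vanishes thanks to the explicit hypothesis $L_2^{(1)}(x)L_2^{(2)}(x)\to 0$ as $x\to 0^+$.

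The main obstacle is that the Cauchy--Schwarz bound $n^2\,\E[U_1^2]\E[V_1^2]$ is only borderline tight: its leading order is $\Delta_n^{2(\delta^{(1)}+\delta^{(2)})}$ up to slowly varying corrections, which sits precisely at the threshold addressed by Assumption \ref{gizeroassum}. The partial cancellations from the $dW^{(2)}$ decomposition---and in particular the fact that the $V$-$V$ covariance loses its $\rho$-dependence altogether---are therefore essential to the argument, since without them one could not afford to bound off-diagonal covariances as wastefully as diagonal ones.
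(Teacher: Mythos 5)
Your argument is correct and is essentially the paper's own proof: the paper simply bounds $\E[(\si X_i)^2]\le n\si\E[X_i^2]$ and then applies the conditional Gaussian fourth-moment identity (Corollary \ref{corollary5}) plus H\"older to each diagonal term, which lands on exactly the same bound $3n^2\,\E[U_1^2]\,\E[V_1^2]$ and the same endgame via Remark \ref{marvellous}, Assumption \ref{veryuseful}, Potter's bound and Assumption \ref{gizeroassum} as your expansion of all cross terms via Isserlis and Cauchy--Schwarz. The only quibble is your closing claim that the disappearance of $\rho$ from $\E[V_iV_j\mid\mathscr F^{\rho}]$ is essential: it is not, since those covariances could equally well be dominated by $|\rho_s|\le 1$ and Cauchy--Schwarz, exactly as you treat the mixed terms, so the cancellation is a convenience rather than a necessity.
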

\begin{proof}
Using the inequality $\E\left[ \left(\si X_i\right)^2\right] \leq n \si \E\left[X_i^2\right]$, it suffices to look at:
\begin{align}
\nonumber &n\si\E\left[ \left(  \int_{-\infty}^\qaz \DD g_s^{(1)}\,dW\tot1_s \int_{-\infty}^\qaz \DD g_s^{(2)} \,dW^{(2)}_s\right)^2 \right]\\\nonumber
&=n\si \int_{-\infty}^\qaz(\DD g_s\tot1)^2\,ds \int_{-\infty}^\qaz(\DD g_s\tot2)^2\,ds\\
& \quad+2n\si \left(\int_{-\infty}^\qaz (\DD g_s\tot 1)(\DD g_s\tot2)\rho_s\,ds\right)^2\\
\label{98.7}
&\leq 3n \si \int_{-\infty}^\qaz(\DD g_s\tot1)^2\,ds \int_{-\infty}^\qaz(\DD g_s\tot2)^2\,ds,
\end{align}
by H\"older's inequality.
Each of the two  integrals in \eqref{98.7} can be decomposed as:
\begin{multline}
\label{ooihds}
\int_{-\infty}^\qaz \left(\DD g_s\tot j \right)^2\,ds
\\=\int_0^{b^{(j)}} (g\tot j (s+\Delta_n)-g\tot j (s))^2\,ds+\int_{b^{(j)}}^{+\infty} (g\tot j (s+\Delta_n)-g\tot j (s))^2\,ds.
\end{multline}

The integral on the unbounded domain is $O(\Delta_n^2)$, as in Remark \ref{marvellous}.
Hence it suffices to consider the product of the integrals in the bounded domains, i.e.:
\begin{equation}
\label{porehgs}
n\si\int_0^{b\tot1} (g\tot 1 (s+\Delta_n)-g\tot 1 (s))^2\,ds\int_0^{b\tot2} (g\tot 2 (s+\Delta_n)-g\tot 2 (s))^2\,ds.
\end{equation}
Thanks to Remark \ref{potter} and Assumption \ref{veryuseful}, the quantity in \eqref{porehgs} becomes, for some $C>0$:
\[
 C n^2 \Delta_n^{2+2(\delta\tot1+\delta \tot2)} L\tot1_2(\Delta_n)L\tot2_2(\Delta_n)\to0.
\]

\end{proof}
\begin{remark}
Note that the last bound is sharp, in the sense that we obtained $O(\Delta_n^{2(\delta\tot1+\delta\tot2)})$ instead of $O(\Delta_n^{2(\delta\tot1+\delta\tot2+\alpha)})$. This is the only term in the proof where we cannot obtain more regularity by the H\"older continuity of the paths of $\rho$. Hence,  Assumption \ref{gizeroassum} which we are imposing in this section is crucial here.
\end{remark}
We conclude with the last part of the convergence theorem. 
\begin{proposition}
\label{ndvrhdvr}
We have the following almost sure convergence result: $B_n^{(2)}\stackrel{a.s.}{\to} 0$ as $n\to \infty$.
\end{proposition}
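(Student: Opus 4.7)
The plan is to rewrite $B_n^{(2)}$ as a single sum over the partition and then control it pathwise using the bound $|\rho_s|\le 1$. Since $\Delta_n=1/n$, one can re-express the subtracted Lebesgue integral as
\begin{equation*}
g\tot1(0^+)g\tot2(0^+)\int_0^1\rho_s\,ds=\sum_{i=1}^n \int_{(i-1)\Delta_n}^{i\Delta_n}g\tot1(0^+)g\tot2(0^+)\rho_s\,ds,
\end{equation*}
so that
\begin{equation*}
B_n^{(2)}=\sum_{i=1}^n\int_{(i-1)\Delta_n}^{i\Delta_n}\bigl[g\tot1(i\Delta_n-s)g\tot2(i\Delta_n-s)-g\tot1(0^+)g\tot2(0^+)\bigr]\rho_s\,ds.
\end{equation*}

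Next I would perform the change of variable $u=i\Delta_n-s$ in each subintegral, which maps every $((i-1)\Delta_n,i\Delta_n)$ onto the same interval $(0,\Delta_n)$ and turns the integrand into the common function $g\tot1(u)g\tot2(u)-g\tot1(0^+)g\tot2(0^+)$ weighted by $\rho_{i\Delta_n-u}$. Bounding $|\rho|\le 1$ pointwise (which holds almost surely by Definition \ref{otioti}) gives the deterministic estimate
\begin{equation*}
|B_n^{(2)}|\le n\int_0^{\Delta_n}\bigl|g\tot1(u)g\tot2(u)-g\tot1(0^+)g\tot2(0^+)\bigr|\,du.
\end{equation*}

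By Assumption \ref{gizeroassum} the limit $g\tot1(0^+)g\tot2(0^+)$ exists and is finite, so the integrand tends to $0$ as $u\to 0^+$. For any $\epsilon>0$, pick $\eta>0$ so that this integrand is bounded by $\epsilon$ on $(0,\eta)$; as soon as $n$ is large enough that $\Delta_n<\eta$ the right-hand side is at most $n\Delta_n\epsilon=\epsilon$. This yields convergence to $0$ pathwise on the full-measure event $\{|\rho_s|\le 1\text{ for all }s\}$, which is in fact stronger than the required a.s.\ convergence.

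I do not expect any real obstacle here: Assumption \ref{gizeroassum} is tailor-made to make the integrand continuous up to $0$ with finite value, and its failure is precisely the reason Theorem \ref{weaktheonovol} has to introduce the scaling $c(\Delta_n)$ in the second scenario. No H\"older regularity of $\rho$ is exploited, and no measurability subtleties arise since the bound is uniform in $\omega$.
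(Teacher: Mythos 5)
Your proof is correct, and it takes a genuinely different and in fact more elementary route than the paper. The paper first freezes the correlation over each block, invoking Lemma \ref{lemmata1} (which rests on the H\"older continuity of $\rho$ from Assumption \ref{As_rho}), and then identifies the limit of $\left(\int_0^{\Delta_n}g\tot1(y)g\tot2(y)\,dy\right)\sum_{i=1}^n\rho_{(i-1)\Delta_n}$ via the mean value theorem together with the a.s.\ Riemann integrability of the paths of $\rho$. You instead telescope the limit $g\tot1(0^+)g\tot2(0^+)\int_0^1\rho_s\,ds$ into the same partition, change variables blockwise, and use only $|\rho_s|\le 1$ a.s.\ plus the existence of the finite limit in Assumption \ref{gizeroassum}, arriving at the deterministic estimate
\begin{equation*}
\bigl|B_n^{(2)}\bigr|\le n\int_0^{\Delta_n}\bigl|g\tot1(u)g\tot2(u)-g\tot1(0^+)g\tot2(0^+)\bigr|\,du\le \epsilon \quad\text{once }\Delta_n<\eta,
\end{equation*}
which indeed gives pathwise convergence on the full-measure set where the paths of $\rho$ stay in $[-1,1]$. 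What your approach buys is independence from the H\"older assumption and from any Riemann-sum convergence for $\rho$, and it sidesteps the slight delicacy in the paper's appeal to the mean value theorem when $g\tot1 g\tot2$ is only continuous on $(0,\infty)$; what the paper's approach buys is structural uniformity, since the same freezing device (Lemma \ref{lemmata1}) is reused in the proof of Proposition \ref{finallyprop}, so the authors get this step almost for free. Either way the conclusion and the use of Assumption \ref{gizeroassum} as the crucial ingredient are the same.
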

\begin{proof}
Using Lemma \ref{lemmata1} we can simply show that 
\[
\si \rho_{(i-1)\Delta_n}\DDint \guno\gdue\,ds
\]
converges almost surely to the limit $g^{(1)}(0^+)g^{(2)}(0^+)\int_0^1\rho_s\,ds$. To this end, note that 
\begin{align}
&\label{1.1}\si \rho_{(i-1)\Delta_n}\DDint \guno\gdue\,ds 
\\\label{1.2} &=\si \rho_{(i-1)\Delta_n} \int_0^{\Delta_n} g^{(1)}(y) g^{(2)}(y)\,dy
\\ \label{1.3}&= \left(\int_0^{\Delta_n} g^{(1)}(y) g^{(2)}(y)\,dy \right)  \si \rho_{(i-1)\Delta_n}
\\ &= \label{1.4}\Delta_n g^{(1)}(\zeta) g^{(2)}(\zeta) 1_{\{\zeta \in [0,\Delta_n]\}} \si \rho_{(i-1)\Delta_n} \overset{\text{a.s.}}{\rightarrow} g^{(1)}(0^+) g^{(2)}(0^+) \int_0^1 \rho_s\,ds. 
\end{align}
Line \eqref{1.2} follows by setting $y=i\Delta_n-s$, while line \eqref{1.4} follows from the mean value theorem and the fact that the paths of $\rho$ are almost surely Riemann integrable.
\end{proof}

This concludes the proof of Theorem \ref{gizerotheo}, that deals with the case where the  indices of regular variation satisfy: $\delta\tot1+\delta\tot2\geq 0$.

In the following section, we wish to lift such a restriction, and prove another law of large numbers, this time for a scaled version of the realised covariation. 

\subsection{Proof of Theorem \ref{weaktheonovol}}
As in the last section, thanks to Remark \ref{ucpremark},  we will prove convergence in probability for $t=1$.
In order to prove this result, we denote by $\mathscr F^{\rho}:=\sigma\{\rho_s,s\in\R\}$ and we start by considering the following decomposition:
\begin{align}\begin{split}
&\Delta_n\frac{\sum_{i=1}^n \DD Y\tot1 \DD Y^{(2)}}{c(\Delta_n)}- \int_0^1\rho_l\,dl
=D_n^{(1)}+D_n^{(2)}, \text{ where } 
\\
D_n^{(1)}&=\Delta_n\frac{\sum_{i=1}^n \DD Y\tot1 \DD Y^{(2)}}{c(\Delta_n)}-\si\E\left[ \frac{\Delta_n}{c(\Delta_n)}\DD Y\tot1 \DD Y^{(2)} \Bigg| \mathscr F^{\rho} \right], \\
D_n^{(2)}&= \si\E\left[ \frac{\Delta_n}{c(\Delta_n)}\DD Y\tot1 \DD Y^{(2)} \Bigg| \mathscr F^{\rho} \right]- \int_0^1\rho_l\,dl.
\end{split}
\label{decompose}
\end{align}

\subsubsection{Convergence of $D_n^{(2)}$}
Note that 
\begin{align*}
\E\left[ \DD Y\tot1 \DD Y^{(2)} \Bigg| \mathscr F^{\rho} \right]&=\int_{-\infty}^{i\Delta_n}\phi_{\Delta_n}^{(1)}(i\Delta_n-s)\phi_{\Delta_n}^{(2)}(i\Delta_n-s)\rho_s\,ds\\
&=\int_{0}^{\infty}\phi_{\Delta_n}^{(1)}(s)\phi_{\Delta_n}^{(2)}(s)\rho_{i\Delta_n-s}\,ds.
\end{align*}
Hence we can write
\begin{align}\begin{split}
\label{convergenceeq}
\Delta_n\frac{\sum_{i=1}^n \E\left[\DD Y^{(1)} \DD Y^{(2)} \Bigg| \mathscr F^{\rho}\right]}{c(\Delta_n)}&=\frac{\int_0^\infty\phi_{\Delta_n}^{(1)}(s)\phi_{\Delta_n}^{(2)}(s)\Delta_n\left(\sum_{i=1}^n\rho_{i\Delta_n-s}\right)\,ds}{c(\Delta_n)}\\
&=
\int_{\R^+}  \Delta_n\left(\sum_{i=1}^n\rho_{i\Delta_n-s}\right) \,d\pi_n(s),
\end{split}
\end{align}
where, for each $n\in \mathbb{N}$, $\pi_n$ is a measure whose density with respect to the Lebesgue measure is given by $$\frac {\phi_{\Delta_n}^{(1)}(s)\phi_{\Delta_n}^{(2)}(s)}{\int_0^\infty \phi_{\Delta_n}^{(1)}(s)\phi_{\Delta_n}^{(2)}(s) \,ds}.$$

Note however that $\pi_n(ds)$ is a probability measure if both $g^{(1)}$ and $g^{(2)}$ are positive and for $s>\Delta_n$ one has $\left(g^{(1)}(s)-g^{(1)}(s-\Delta_n)\right)\times\left(g^{(2)}(s)-g^{(2)}(s-\Delta_n)\right)\geq0$, that is, if both are monotonically increasing or decreasing.
This is why we needed to introduce Assumption \ref{monotonic}.

Let us now formulate a key result needed for establishing the  convergence of $D_n^{(2)}$.

\begin{proposition}
\label{newconvergenceprop} {Assume that the assumptions of Theorem \ref{weaktheonovol} hold.}
If the probability measures $\pi_n$ defined after equation \eqref{convergenceeq} converge weakly to a probability $\pi$ on $[0,\infty)$ then:
\begin{equation}
\label{newconvergence}
\Delta_n\frac{\sum_{i=1}^n \E\left[\DD Y^{(1)} \DD Y^{(2)} \Bigg| \mathscr F^{\rho}\right]}{c(\Delta_n)}\overset{\text{a.s.}}{\rightarrow} \int_{\R^+} \left(\int_{-s}^{1-s}\rho_{l}\,dl\right)\,d\pi(s), \quad \text{ as } n \to \infty.
\end{equation}
\end{proposition}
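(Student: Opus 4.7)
The plan is to rewrite the left-hand side of \eqref{newconvergence} via identity \eqref{convergenceeq} as $\int_{\R^+} \psi_n(s)\,d\pi_n(s)$, introducing the shorthands $\psi_n(s):=\Delta_n\sum_{i=1}^n \rho_{i\Delta_n-s}$ and $\psi(s):=\int_{-s}^{1-s}\rho_l\,dl$, so that the target reads $\int \psi_n\,d\pi_n\to \int\psi\,d\pi$ almost surely. Throughout I would work pathwise on the full-measure event on which $\rho$ is locally $\alpha$-H\"older continuous (Assumption \ref{holderassum}) and bounded by $1$.

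The first key step is a uniform Riemann-sum estimate on bounded intervals. For any $M>0$, writing
\[
\psi_n(s)-\psi(s)=\sum_{i=1}^n\int_{(i-1)\Delta_n-s}^{i\Delta_n-s}\bigl(\rho_{i\Delta_n-s}-\rho_l\bigr)\,dl
\]
and invoking the H\"older bound on $[-M,1]$ yields $\sup_{s\in[0,M]}|\psi_n(s)-\psi(s)|\leq C_M(\omega)\Delta_n^{\alpha}\to 0$. In parallel, $|\psi_n|,|\psi|\leq 1$ and the representation $\psi(s)=\int_0^1\rho_{u-s}\,du$ combined with dominated convergence shows that $\psi\in C_b([0,\infty))$.

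The second step is to couple this analytic bound with the assumed weak convergence $\pi_n\Rightarrow\pi$. I would split
\[
\int\psi_n\,d\pi_n-\int\psi\,d\pi = \int(\psi_n-\psi)\,d\pi_n + \Bigl(\int\psi\,d\pi_n-\int\psi\,d\pi\Bigr).
\]
The second bracket tends to $0$ by weak convergence and $\psi\in C_b([0,\infty))$. For the first bracket I would fix $\epsilon>0$, pick a continuity point $M$ of $\pi$ with $\pi([M,\infty))<\epsilon$, so that the portmanteau theorem delivers $\pi_n([M,\infty))<2\epsilon$ eventually; then splitting the integral at $M$, the contribution of $[0,M]$ is controlled by the uniform Riemann-sum estimate and the tail $[M,\infty)$ is bounded by $4\epsilon$ using $|\psi_n-\psi|\leq 2$. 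Letting $\epsilon\downarrow 0$ closes the argument.

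The main obstacle is that $[0,\infty)$ is not compact and the Riemann-sum convergence is only uniform on bounded sets; this is precisely why the tightness of $\{\pi_n\}$ implied by weak convergence, combined with the sup-bound $|\psi_n|\leq 1$, is essential. Aside from this, everything reduces to a standard Riemann-sum error bound for H\"older functions plus a routine weak-convergence/tightness argument.
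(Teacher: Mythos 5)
Your proposal is correct and follows essentially the same route as the paper: rewrite the left-hand side as $\int \psi_n \, d\pi_n$, bound the Riemann-sum error $\psi_n-\psi$ via the H\"older continuity of $\rho$ together with $\pi_n(\R^+)=1$, and invoke the weak convergence $\pi_n\Rightarrow\pi$ for the limit term. The only difference is one of completeness rather than method: the paper bounds $|\pi_n(\psi_n)-\pi_n(\psi)|$ in one stroke using a global H\"older constant and leaves the step $\pi_n(\psi)\to\pi(\psi)$ implicit, whereas you verify $\psi\in C_b([0,\infty))$ explicitly and handle the non-compactness of $[0,\infty)$ by a continuity-point/tightness splitting, which is a sound (slightly more careful) treatment of the same argument.
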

\begin{proof}
Denote by $
f_n(x,\omega):=\Delta_n\si \rho_{i\Delta_n-x}(\omega)
$ and by 
$
f(x,\omega):=\int_{-x}^{1-x}\rho_l(\omega)\,dl.
$
Furthermore, for a measure $\mu$ on $[0,+\infty)$ and a measurable function $g$: $\mu(g):=\int_{\R^+} g(x)\,d\mu(x)$.
For a.s. $\omega$, $\lim_{n\to\infty} f_n(x,\omega)=f(x,\omega)$.

Let $l_i^*$ be any point in $[-x+(i-1)\Delta_n,-x+i \Delta_n]$.
The error we make at the $n$-th step with a Riemann sum is:
\[
\begin{split}
\abs{f_n-f}&=\left|{\si \int_{-x+(i-1)\Delta_n}^{-x+i\Delta_n} \left(\rho_l - \rho_{l_i^*}\right)\,dl }\right| \leq \si \int_{-x+(i-1)\Delta_n}^{-x+i\Delta_n} K \abs{l-l_i^*}^{\alpha}\,dl \\
&\leq \si \int_{-x+(i-1)\Delta_n}^{-x+i\Delta_n} K \Delta_n^{\alpha}\,dl= K n \Delta_n^{\alpha+1},
\end{split}
\]
for a positive constant $K$, hence:
\[
\left| \pi_n(f_n) - \pi_n(f)\right|\leq K n \Delta_n^{\alpha+1} \pi_n(\R^+)=K n \Delta_n^{\alpha+1}\to 0,
\]
since $\pi_n(\R^+):=\int_{\R^+} d\pi_n(x)=1$.
\end{proof}

\begin{proposition}
\label{newconvergenceprop2}  Assume that the assumptions of Theorem \ref{weaktheonovol} hold. Then $D_n^{(2)}\overset{\text{a.s.}}{\rightarrow} 0$ as $n \to \infty$.
\end{proposition}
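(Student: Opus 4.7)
The approach is to verify the hypothesis of Proposition~\ref{newconvergenceprop} with the limiting probability measure being the Dirac mass $\delta_0$ at the origin. Once this identification is established, Proposition~\ref{newconvergenceprop} immediately yields
\[
\si\E\!\left[\frac{\Delta_n}{c(\Delta_n)}\DD Y\tot1\DD Y\tot2\,\Big|\,\mathscr F^\rho\right]\overset{\text{a.s.}}{\longrightarrow}\int_{\R^+}\!\left(\int_{-s}^{1-s}\rho_l\,dl\right)d\delta_0(s)=\int_0^1\rho_l\,dl,
\]
which exactly cancels the subtracted term in $D_n^{(2)}$ and hence forces $D_n^{(2)}\overset{\text{a.s.}}{\to}0$. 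Note that the function $s\mapsto\int_{-s}^{1-s}\rho_l\,dl$ is bounded by $1$ and Lipschitz in $s$ almost surely (since $|\rho|\leq 1$), so it is continuous at $0$ and admissible as a test function for weak convergence.

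To show $\pi_n\Rightarrow\delta_0$ it is enough, by the Portmanteau theorem, to prove that for every fixed $\epsilon>0$,
\[
\pi_n\big([\epsilon,\infty)\big)=\frac{1}{c(\Delta_n)}\int_\epsilon^\infty\phi_{\Delta_n}^{(1)}(s)\phi_{\Delta_n}^{(2)}(s)\,ds\longrightarrow 0.
\]
By Assumption~\ref{terrific} the denominator is $\Delta_n^{\delta\tot1+\delta\tot2+1}L_4^{(1,2)}(\Delta_n)$, whose exponent is strictly less than $2$ since $\delta\tot j<1/2$. For the numerator, once $n$ is large enough that $\Delta_n<\epsilon$ we have $\phi_{\Delta_n}^{(j)}(s)=g\tot j(s)-g\tot j(s-\Delta_n)$ throughout $[\epsilon,\infty)$, and the Cauchy--Schwarz inequality reduces the task to showing
\[
\int_\epsilon^\infty\!\bigl(g\tot j(s)-g\tot j(s-\Delta_n)\bigr)^2\,ds=O(\Delta_n^2),\qquad j=1,2.
\]
After the change of variable $u=s-\Delta_n$ I split at $b$: on $[b,\infty)$ the bound $K\Delta_n^2$ follows directly from Remark~\ref{marvellous} thanks to $(g\tot j)'\in L^2([b,\infty))$, while on $[\epsilon-\Delta_n,b]$ (empty if $\epsilon>b$) I invoke the regular-variation structure of Assumption~\ref{veryuseful} via the rescaling $u=\Delta_n v$, in the spirit of the Example in Section~\ref{oihghn}: the resulting $\Delta_n$-independent integrand $((v+1)^{\delta\tot j}-v^{\delta\tot j})^2$ is integrable on $(0,\infty)$ precisely because $\delta\tot j<1/2$, so that
\[
\int_0^b\!\bigl(g\tot j(u+\Delta_n)-g\tot j(u)\bigr)^2\,du=I_j\,\Delta_n^{2\delta\tot j+1}+O(\Delta_n^2)
\]
for a suitable constant $I_j$, whence the tail portion $\int_{\epsilon-\Delta_n}^b$ is $O(\Delta_n^2)$ as required. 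Combining numerator and denominator gives
\[
\pi_n\big([\epsilon,\infty)\big)=O\!\left(\frac{\Delta_n^{1-\delta\tot1-\delta\tot2}}{L_4^{(1,2)}(\Delta_n)}\right)\longrightarrow 0,
\]
by Potter's bound (Remark~\ref{potter}), using that $1-\delta\tot1-\delta\tot2>0$.

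The main obstacle is precisely the refined estimate on $[\epsilon-\Delta_n,b]$ when $\epsilon$ is small: Remark~\ref{marvellous} only controls the range beyond $b$, so one has to extract from Assumption~\ref{veryuseful} the fact that the mass of $(g\tot j(\cdot+\Delta_n)-g\tot j(\cdot))^2$ concentrates near $u=0$ strictly faster than $c(\Delta_n)$ itself. Once this technical concentration estimate is in hand, the almost-sure conclusion follows by applying Proposition~\ref{newconvergenceprop} with $\pi=\delta_0$, and the proof of Proposition~\ref{newconvergenceprop2} is complete.
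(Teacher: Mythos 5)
Your overall route is the same as the paper's: identify the weak limit of $\pi_n$ as $\delta_0$ and then apply Proposition \ref{newconvergenceprop}; your Portmanteau reduction to $\pi_n([\epsilon,\infty))\to 0$, the Cauchy--Schwarz step, and the final comparison $\Delta_n^2/c(\Delta_n)=\Delta_n^{1-\delta^{(1)}-\delta^{(2)}}/L_4^{(1,2)}(\Delta_n)\to 0$ via Assumption \ref{terrific} and Potter's bound are all fine. The genuine gap is the estimate $\int_{\epsilon}^{b}\left(g^{(j)}(s)-g^{(j)}(s-\Delta_n)\right)^2 ds=O(\Delta_n^2)$, which you never actually prove. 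The rescaling $u=\Delta_n v$ and the integrability of $\left((v+1)^{\delta^{(j)}}-v^{\delta^{(j)}}\right)^2$ belong to the special case $g^{(j)}(x)=x^{\delta^{(j)}}$ treated in the Example; Assumption \ref{veryuseful} only prescribes the regular variation of the integral of squared increments over the whole interval $(0,b^{(j)})$ and gives no pointwise (or localized) control of $g^{(j)}$ on $[\epsilon,b^{(j)}]$. Moreover, even granting the expansion $\int_0^b\left(g^{(j)}(u+\Delta_n)-g^{(j)}(u)\right)^2du=I_j\,\Delta_n^{2\delta^{(j)}+1}+O(\Delta_n^2)$, the inference ``whence the portion over $[\epsilon-\Delta_n,b]$ is $O(\Delta_n^2)$'' does not follow: you would need the integral over $[0,\epsilon-\Delta_n]$ to carry the same leading constant $I_j$, which is information about the increments of $g^{(j)}$ \emph{away} from the origin that Assumption \ref{veryuseful} cannot supply. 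You flag this yourself as a ``technical concentration estimate'' to be extracted from Assumption \ref{veryuseful}; in general it cannot be.

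The missing ingredient is Assumption \ref{squareassumm}: $g^{(j)}$ is differentiable everywhere on $(0,\infty)$, so on the compact set $[\epsilon,b^{(j)}]$, bounded away from the singularity at the origin, the mean value theorem gives $\left|\phi^{(j)}_{\Delta_n}(s)\right|\le \Delta_n\sup_{[\epsilon-\Delta_n,\,b^{(j)}]}\left|\left(g^{(j)}\right)'\right|$, hence $\int_\epsilon^{b^{(j)}}\left(\phi^{(j)}_{\Delta_n}(s)\right)^2ds\le C\Delta_n^2$; combined with Remark \ref{marvellous} (equivalently, $\left(g^{(j)}\right)'\in L^2((b^{(j)},\infty))$) on $[b^{(j)},\infty)$, this yields the required $O(\Delta_n^2)$ bound for $\int_\epsilon^\infty$, after which your argument goes through verbatim. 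This is precisely how the paper argues, working with the distribution functions $F_n$ and then adding a tightness-plus-Prohorov step to identify the limit; your direct reduction to $\pi_n([\epsilon,\infty))\to0$ legitimately bypasses that last step, but the core estimate must come from the differentiability assumption, not from the regular-variation assumption.
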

\begin{proof}
We remark that checking weak convergence of probability measures is equivalent to checking pointwise convergence of the  corresponding cumulative distribution functions. Hence we write
\[
F_n(x)=\pi_n\left([0,x]\right)=\frac{\int_0^x \phi_{\Delta_n}^{(1)}(s)\phi_{\Delta_n}^{(2)}(s)\,ds}{\int_0^\infty \phi_{\Delta_n}^{(1)}(s)\phi_{\Delta_n}^{(2)}(s)\,ds}.
\]
Now fix $\epsilon > \Delta_n$. Then, for all $x>\epsilon$, one has:
\[
F_n(x)-F_n(\epsilon)=\frac{\int_\epsilon^x \phi_{\Delta_n}^{(1)}(s)\phi_{\Delta_n}^{(2)}(s)\,ds}{\int_0^\infty \phi_{\Delta_n}^{(1)}(s)\phi_{\Delta_n}^{(2)}(s)\,ds}\leq \frac{1}{c(\Delta_n)}\sqrt{ \int_\epsilon^x \left(\phi_{\Delta_n}^{(1)}(s)\right)^2\,ds\int_\epsilon^x \left(\phi_{\Delta_n}^{(2)}(s)\right)^2\,ds}.
\]
Denoting by $b^{(1)}$  a real number such that $\left(g^{(1)'}\right)^2(s)$ is decreasing for $s>b^{(1)}$:
\[
\begin{split}
\int_\epsilon^x \left(\phi_{\Delta_n}^{(1)}(s)\right)^2\,ds&=\int_\epsilon^{b\tot1}  \left(\phi_{\Delta_n}^{(1)}(s)\right)^2\,ds+\int_{b\tot1}^x  \left(\phi_{\Delta_n}^{(1)}(s)\right)^2\,ds\\
&\leq \Delta_n^2\left[(b\tot1-\epsilon)\sup_{[\epsilon,b\tot1]}{\left(g^{(1)'}\right)^2}(s)+\norm{g^{(1)'}}_{L^2[\epsilon,\infty)}\right]=C\Delta^2_n\to 0,
\end{split}
\]
so 
\begin{equation}
\label{weakconv1}
\lim_{n\to \infty} F_n(x)-F_n(\epsilon) =0,
\end{equation}
which implies that the limiting cumulative distribution function will be constant on the sets $(\epsilon,\infty)$, for all $\epsilon >0$, and hence its mass will intuitively concentrate at zero.
We now formalise this observation.

We observe that the family of measures $\{\pi_n\}_{n=1}^\infty$ is easily proven to be tight. Indeed, fix  a small $\zeta>0$, then if $b=\max(b^{(1)},b^{(2)})$, by the above computations:
\[
\int_b^\infty \phi_{\Delta_n}^{(1)}(s)\phi_{\Delta_n}^{(2)}(s)\,ds \leq C \Delta^2_n, 
\]
which is smaller than $\zeta$ for $n$ big enough, say for $n> n(\zeta)$. Hence the compact set $[0,b]$ is such that $\pi_n[0,b]>1-\zeta$ for all $n>n(\zeta)$. The finite collection $\{\pi_i\}_{i=1}^{n(\zeta)}$ is obviously tight, and thus we can conclude that the whole family $\{\pi_n\}$ is tight.

By tightness and Prohorov's theorem, the sequence $\pi_n$ is relatively compact, that is, every subsequence contains a further subsubsequence converging weakly to some probability measure $\Q$ (that could depend on the chosen subsequence). By \eqref{weakconv1}, the only possible weak limit is the unit mass at zero, and hence every subsequence has a subsubsequence converging to $\delta_0$. But  this fact is equivalent to the weak convergence $\pi_n \Rightarrow \delta_0$ (see \cite{billingsley2009convergence}, Theorem 2.6).

Now from \eqref{newconvergence} and the above remark we obtain that 
\[
\Delta_n\frac{\sum_{i=1}^n \E\left[\DD Y^{(1)} \DD Y^{(2)} \Bigg| \mathscr F^{\rho}\right]}{c(\Delta_n)}\overset{\text{a.s.}}{\rightarrow} \int_{\R^+} \left(\int_{-s}^{1-s}\rho_{l}\,dl\right)\,d\delta_0(s)=\int_0^1\rho_l\,dl, \quad \text{ as } n \to \infty,
\]
i.e.~$D_n^{(2)}\overset{\text{a.s.}}{\rightarrow} 0$ as $n\to \infty$.
\end{proof}
\subsubsection{Convergence of $D_n^{(1)}$}
It remains to prove that $D_n^{(1)}$ converges to 0. To this end note that asking that its $L^2$-norm goes to zero is equivalent to showing that
$\text{Var}(D_n^{(1)}| \mathscr F^{\rho})\to 0$ as $n\to \infty$.

\begin{proposition}
\label{summingup}
Suppose that the assumptions  of Theorem \ref{weaktheonovol} hold. Then:
\[
\mathrm{Var}\left(\Delta_n\frac{\sum_{i=1}^n \DD Y^{(1)} \DD Y^{(2)}}{c(\Delta_n)} \Bigg| \mathscr F^{\rho} \right) \to 0,\quad \text{ as } n \to \infty.
\]
\end{proposition}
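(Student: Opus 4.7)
The plan is to exploit the conditional Gaussian structure of the process: given $\mathscr F^{\rho}$, the increments $\{\DD Y^{(1)},\DD Y^{(2)}\}_{i=1}^n$ are jointly centred Gaussian. Applying Isserlis' (Wick's) theorem, the conditional variance of the double sum splits into two pieces,
\[
\Var\!\left(\sum_{i=1}^n \DD Y^{(1)}\DD Y^{(2)} \,\Big|\, \mathscr F^{\rho}\right) = \sum_{i,k=1}^n \bigl[C^{(1,1)}_{i,k}C^{(2,2)}_{i,k} + C^{(1,2)}_{i,k}C^{(2,1)}_{i,k}\bigr],
\]
where $C^{(a,b)}_{i,k}:=\Cov(\DD Y^{(a)},\Delta^n_k Y^{(b)}\mid\mathscr F^{\rho})$. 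After multiplying by the scaling $\Delta_n^2/c(\Delta_n)^2$, the problem reduces to an analytical estimate of these double sums.

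For the autocovariances ($a=b=j$), $\rho$ drops out by the independence of $\tilde W$ and $W^{(1)}$, and the stationarity of $Y^{(j)}$ combined with the parallelogram identity for processes with stationary increments yields, for $|i-k|\ge 1$,
\[
C^{(j,j)}_{i,k}=\tfrac12\bigl[\bar R^{(j,j)}((|i-k|+1)\Delta_n)-2\bar R^{(j,j)}(|i-k|\Delta_n)+\bar R^{(j,j)}((|i-k|-1)\Delta_n)\bigr].
\]
A second-order mean value theorem and Assumption \ref{assss} then give $|C^{(j,j)}_{i,k}|\le C\Delta_n^2 (|i-k|\Delta_n)^{2\delta^{(j)}-1}L_2^{(j,j)}(|i-k|\Delta_n)$, whereas on the diagonal $C^{(j,j)}_{i,i}=\bar R^{(j,j)}(\Delta_n)$. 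For the cross-covariances, only the $W^{(1)}$-driven component of $Y^{(2)}$ contributes and $|\rho|\le 1$ gives the pointwise bound $|C^{(1,2)}_{i,k}|\le\int|\phi^{(1)}_{\Delta_n}(i\Delta_n-s)\phi^{(2)}_{\Delta_n}(k\Delta_n-s)|\,ds$; applying the same parallelogram argument to $\bar R^{(1,2)}$ together with Assumption \ref{assss} produces, off the diagonal, the bound $|C^{(1,2)}_{i,k}|\le C\Delta_n^2 (|i-k|\Delta_n)^{\delta^{(1)}+\delta^{(2)}-1}L_2^{(1,2)}(|i-k|\Delta_n)$, and on it $|C^{(1,2)}_{i,i}|\le c(\Delta_n)$.

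Substituting these bounds into the variance formula, the diagonal contribution ($i=k$) is of size $n\cdot\Delta_n^2/c(\Delta_n)^2\cdot c(\Delta_n)^2=n\Delta_n^2=\Delta_n\to 0$. For the off-diagonal, the change of variables $m=|i-k|$ and $u=m\Delta_n$ turns the sum into a Riemann-type approximation of an integral of the form $\int_0^T u^{2(\delta^{(1)}+\delta^{(2)})-2}L(u)\,du$: in the regime $\delta^{(1)}+\delta^{(2)}>1/2$ one picks up a factor $\Delta_n^{2-2(\delta^{(1)}+\delta^{(2)})}$ from the tail of the sum, in the regime $\delta^{(1)}+\delta^{(2)}<1/2$ the series converges absolutely so that the surviving prefactor is $\Delta_n$, and the borderline case produces an extra $\log n$ still compensated by $\Delta_n$. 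In all cases, once combined with the rescaling $\Delta_n^2/c(\Delta_n)^2\sim\Delta_n^{-2(\delta^{(1)}+\delta^{(2)})}L_4^{(1,2)}(\Delta_n)^{-2}$, the whole expression tends to $0$ via Potter's bound (Remark \ref{potter}).

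The main obstacle is controlling the slowly varying factors. The functions $L_2^{(i,j)}$ and $L_0^{(j,j)}$ that naturally arise from the variogram expansions are a priori unrelated to the normalising $L_4^{(1,2)}$ hidden in $c(\Delta_n)$. The technical conditions \eqref{ewiue}, \eqref{ass4}, and \eqref{terrific2} in Assumptions \ref{assss} and \ref{terrific} are precisely engineered to handle this: they provide uniform bounds on the ratios $L_0^{(i,j)}/\tilde L_0^{(i,j)}$, $L_2^{(i,j)}(y)/\tilde L_0^{(i,j)}(x)$, and $L_4^{(1,2)}/\tilde L_0^{(1,2)}$, guaranteeing that the slowly varying quotient produced by the rescaled double sum stays bounded while the power-law factor forces the entire conditional variance to vanish.
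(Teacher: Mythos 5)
Your proposal follows essentially the same route as the paper's proof: conditionally on $\mathscr F^{\rho}$ the increments are jointly Gaussian, the conditional variance is expanded via the Isserlis-type product formula (the paper's Lemma \ref{importantlemma}) into a diagonal part of order $n\Delta_n^2$ plus off-diagonal products of pairwise covariances, the bound $|\rho|\le 1$ reduces these to the Gaussian core processes, and the covariance decay combined with the slowly varying controls of Assumptions \ref{assss} and \ref{terrific} (Potter bounds) forces the whole expression to zero. The only differences are presentational: the paper imports the decay estimate $|\E[\Delta_1^n G^{(a)}\Delta_{1+k}^n G^{(b)}]|/(\tau_n^{(a)}\tau_n^{(b)})\leq C k^{\delta^{(a)}+\delta^{(b)}+\epsilon-1}$ from Theorems 4.1 and 4.3 of \cite{GranelliVeraart2017b} and concludes with Ces\`aro's theorem, whereas you rederive that bound from second differences of the variogram and then sum the resulting series explicitly, regime by regime in $\delta^{(1)}+\delta^{(2)}$.
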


\begin{proof} We can write
\begin{align}
\label{iosdyrgfea}
\text{Var}\left(\Delta_n\frac{\sum_{i=1}^n \DD Y^{(1)} \DD Y^{(2)}}{c(\Delta_n)} \Bigg| \mathscr F^{\rho} \right)=\nonumber\frac{\Delta_n^2}{c^2(\Delta_n)}\left( \si\text{Var}\left(\DD Y^{(1)} \DD Y^{(2)}\Bigg|\mathscr F^\rho\right)\right)\\+2\frac{\Delta_n^2}{c^2(\Delta_n)} \left(\si \sum_{j=i+1}^n \text{Cov}\left(\DD Y^{(1)} \DD Y^{(2)},\Delta^n_j Y\tot1 \Delta^n_j Y\tot2\Bigg|\mathscr F^\rho\right)\right).
\end{align}
Now observe that:
\[
\text{Var}\left( \DD Y^{(1)} \DD Y^{(2)} \Bigg| \mathscr F^{\rho} \right) \leq \E\left[\left( \DD Y\tot1 \DD Y\tot2\right)^2 \Bigg| \mathscr F^{\rho} \right]\sim K  c^2(\Delta_n),
\]
for some positive constant $K>0$ thanks to Assumption \ref{veryuseful} and the computations in the previous sections.
Hence the first term in \eqref{iosdyrgfea} is asymptotically equivalent to:
\[
Kn\Delta_n^2 \to 0.
\]
Now we have that 
\begin{align}\begin{split}
\label{nusvcacrncr}
&\text{Cov}\left(\DD Y^{(1)} \DD Y^{(2)},\Delta^n_j Y\tot1 \Delta^n_j Y\tot2\Bigg|\mathscr F^\rho\right)
=\E\left[\DD Y^{(1)} \DD Y^{(2)}\Delta^n_j Y\tot1 \Delta^n_j Y\tot2\Bigg| \mathscr F^{\rho}\right]\\
& \quad-\E\left[\DD Y^{(1)} \DD Y^{(2)}\Bigg| \mathscr F^{\rho}\right]\E\left[\Delta^n_j Y\tot1 \Delta^n_j Y\tot2\Bigg| \mathscr F^{\rho}\right].
\end{split}
\end{align}
Thanks to Lemma \ref{importantlemma}, we know how to express the first expectation in \eqref{nusvcacrncr} and write
\begin{multline}
\label{sdvnnlas}
\text{Cov}\left(\DD Y^{(1)} \DD Y^{(2)},\Delta^n_j Y\tot1 \Delta^n_j Y\tot2\Bigg|\mathscr F^\rho\right)=\\
\E\left[\DD Y^{(1)}\Delta_j^nY\tot1\big|\mathscr F^\rho\right]\E\left[\DD Y\tot2 \Delta_j^nY\tot2\big|\mathscr F^\rho\right]\\+\E\left[\DD Y\tot1 \Delta_j^nY\tot2\big|\mathscr F^\rho\right]\E\left[\DD Y\tot2 \Delta_j^nY\tot1\big|\mathscr F^\rho\right].
 \end{multline}
Now, recall the notation $\DD g^{(k)}_s:= g^{(k)}\left(i\Delta_n-s\right)-g^{(k)}\left(\qaz-s\right)$, for $k\in \{1, 2\}$. Since $j>i$, we have:
\begin{multline}
\label{ashzzaz}
\E\left[\DD Y^{(1)}\Delta_j^nY\tot1\big|\mathscr F^\rho \right]=\DDint g\tot1 (i\Delta_n-s)\JJ g_s\tot1\,ds+ \int_{-\infty}^\qaz \DD g_s\tot1 \JJ g_s \tot1\,ds
\\
=\int_0^{\Delta_n} g\tot1 (s)\left(g\tot1((j-i)\Delta_n+s)-g\tot1((j-i-1)\Delta_n+s)\right)\,ds+\\
  \int_0^\infty \left(g\tot 1(\Delta_n+s)-g\tot1(s)\right)\left(g\tot1 ((j-i)\Delta_n+s)-g\tot1 ((j-i-1)\Delta_n+s)\right)\,ds,
\end{multline}
and similarly:
\[
\E\left[\DD Y\tot1 \Delta_j^nY\tot2\big|\mathscr F^\rho\right]=\DDint g\tot1(i\Delta_n-s)\DD g_s\tot2 \rho_s\,ds+ \int_{-\infty}^\qaz \DD g_s\tot1 \JJ g_s \tot2\rho_s\,ds,
\]
and analogously for the other terms in \eqref{sdvnnlas}.

These terms do not simply depend on the difference $j-i$ because of the presence of the correlation process $\rho$. Nevertheless, $\rho$ is bounded by 1 in absolute value, so passing to the absolute values, we can  write:
\begin{multline}
 \left|\text{Cov}\left(\DD Y^{(1)} \DD Y^{(2)},\Delta^n_j Y\tot1 \Delta^n_j Y\tot2\Bigg|\mathscr F^\rho\right)\right|\\
 \leq\E\left[\left|\DD Y^{(1)}\Delta_j^nY\tot1\right|\big|\rho\equiv1\right]\E\left[\left|\DD Y\tot2 \Delta_j^nY\tot2\right|\big|\rho\equiv1\right]\\+\E\left[\left|\DD Y\tot1 \Delta_j^nY\tot2\right|\big|\rho\equiv1\right]\E\left[\left|\DD Y\tot2 \Delta_j^nY\tot1\right|\big|\rho\equiv1\right] .
 \end{multline}
The proof that the quantity goes to zero follows from Theorem 4.1 
in \cite{GranelliVeraart2017b}. 
Details on how to apply that theorem in this setting are as follows.

Denote by 
$r\tot n_{a,b}(j-i)=\E\left[\left|\DD Y^{(a)}\Delta_j^nY^{(b)}\right|\big|\rho\equiv1\right]$, for $a, b \in \{1, 2\}$.
Then, in fact,  $r\tot n_{a,b}(j-i)=\E\left[\DD G^{(a)}\Delta_j^nG^{(b)}\right]$, for $a, b \in \{1, 2\}$.
The second term in line \eqref{iosdyrgfea}  is then dominated by:
\begin{align}
&2\frac{\Delta_n^2}{c^2(\Delta_n)}\si \sum_{j=i+1}^n \left[r\tot n_{1,1}(j-i)r\tot n_{2,2}(j-i)+r\tot n_{1,2}(j-i)r\tot n_{2,1}(j-i)\right] \\
&=2\frac{\Delta_n^2}{c^2(\Delta_n)}\si \sum_{j-i=1}^n\left[r\tot n_{1,1}(j-i)r\tot n_{2,2}(j-i)+r\tot n_{1,2}(j-i)r\tot n_{2,1}(j-i)\right]\\
&=2\frac{\Delta_n^2}{c^2(\Delta_n)}n \si \left[r\tot n_{1,1}(i)r\tot n_{2,2}(i)+r\tot n_{1,2}(i)r\tot n_{2,1}(i)\right]\\
&=2\frac{1}{n} \si \left[\frac{r\tot n_{1,1}(i)r\tot n_{2,2}(i)}{c^2(\Delta_n)}+\frac{r\tot n_{1,2}(i)r\tot n_{2,1}(i)}{c^2(\Delta_n)}\right].
\end{align}
With our assumptions, Theorem 4.3 
in \cite{GranelliVeraart2017b}
gives us the uniform bound for a positive constant $C$: 
{
\begin{align*}
\frac{|\mathbb{E}(\Delta_1^nG^{(a)} \Delta_{1+i}^n G^{(b)})|}{\tau_n^{(a)}\tau_n^{(b)}}
\leq C (i-1)^{\delta\tot a+ \delta \tot b +\epsilon -1},
\end{align*}
for all $\epsilon >0$, 
where, for $i\in \{a, b\}$: 
\begin{align*}\tau _n \tot i &:=\sqrt{ \E\left[ \left(\Delta_1^n G\tot i\right)^2\right]}=\sqrt{\int_0^{\infty} \left(g\tot i(s+\Delta_n)-g\tot i(s)\right)^2\,ds + \int_0^{\Delta_n} \left(g\tot i (s)\right)^2\,ds}\\
&=\sqrt{\bar R^{(i)}(\Delta_n)}=\Delta_n^{\delta^{(i)}+1/2}\sqrt{L_0^{(i)}(\Delta_n)}.
\end{align*}
We can write
\begin{align*}
\frac{\left| r^{(n)}_{a,b}(k)\right|}{c(\Delta_n)}=\frac{|\mathbb{E}(\Delta_1^nG^{(a)} \Delta_{1+k}^n G^{(b)})|}{c(\Delta_n)}
=
\frac{|\mathbb{E}(\Delta_1^nG^{(a)} \Delta_{1+k}^n G^{(b)})|}{\tau_n^{(a)}\tau_n^{(b)}} 
\frac{\tau_n^{(a)}\tau_n^{(b)}}{c(\Delta_n)}.
\end{align*}
According to Assumption  \ref{terrific}, we have
\begin{align*}
\frac{\tau_n^{(a)}\tau_n^{(b)}}{c(\Delta_n)}
= \frac{\Delta_n^{\delta^{(a)}+\delta^{(b)}+1}\sqrt{L_0^{(a)}(\Delta_n) L_0^{(b)}(\Delta_n)}}{\Delta_n^{\delta^{(1)}+\delta^{(2)}+1}L_4^{(1,2)}(\Delta_n)}
=\frac{\sqrt{L_0^{(a)}(\Delta_n) L_0^{(b)}(\Delta_n)}}{L_4^{(1,2)}(\Delta_n)}.
\end{align*}
Hence
\begin{align*}
\frac{\tau_n^{(1)}\tau_n^{(1)}}{c(\Delta_n)}
\frac{\tau_n^{(2)}\tau_n^{(2)}}{c(\Delta_n)}
= \frac{L_0^{(1)}(\Delta_n) L_0^{(2)}(\Delta_n)}{(L_4^{(1,2)}(\Delta_n))^2}=\frac{\tau_n^{(1)}\tau_n^{(2)}}{c(\Delta_n)}
\frac{\tau_n^{(2)}\tau_n^{(1)}}{c(\Delta_n)}.
\end{align*}
According to Assumption \ref{terrific} the term $\frac{L_0^{(1)}(\Delta_n) L_0^{(2)}(\Delta_n)}{(L_4^{(1,2)}(\Delta_n))^2}$ 
converges, so it can be bounded by a constant $K$.
Hence we can deduce that for all $\epsilon > 0$ there exists an $n_0(\epsilon)\in \N$ such that  
\begin{align*}
\frac{r\tot n_{1,1}(i)r\tot n_{2,2}(i)}{c(\Delta_n)^2}\leq KC(i-1)^{2\delta\tot1+2\delta\tot2+2\epsilon-2}, && \frac{r\tot n_{1,2}(i)r\tot n_{2,1}(i)}{c(\Delta_n)^2} \leq KC(i-1)^{2\delta\tot1+2\delta\tot2+2\epsilon-2}
\end{align*}
 and for all $n\geq n_0(\epsilon)$.
}
Convergence to zero then follows from  Ces\'{a}ro's Theorem and from:
\[
\lim_{i\to \infty} (i-1)^{2\delta\tot1+2\delta\tot2+2\epsilon-2} = 0  {\iff}  2\delta\tot1+2\delta\tot2+2\epsilon-2<0 \iff   \epsilon< 1-\delta\tot1-\delta\tot2,
\]
which is always possible, as $\delta\tot 1+\delta\tot 2\in(-1,+1)$.

\end{proof}

\subsection{Proof of Theorem \ref{conv2}}
We first observe that, thanks to Theorem \ref{ucpsuff} and Remark \ref{ucpremark}, $u.c.p.$ convergence on $[0,T]$ is implied by convergence in probability for all $t\in[0,T]$. Without loss of generality, we will then restrict ourselves to $t=1$.
The proof will be split in several steps.
We start with the familiar splitting:
\begin{align}\begin{split}
&\Delta_n\frac{\sum_{i=1}^n \DD X\tot1 \DD X^{(2)}}{c(\Delta_n)}- \int_0^1\sigma\tot1_l\sigma\tot2_l\rho_l\,dl
=G_n^{(1)}+G_n^{(2)}, \text{ where } 
\\
G_n^{(1)}&=\Delta_n\frac{\sum_{i=1}^n \DD X\tot1 \DD X^{(2)}}{c(\Delta_n)}-\si\E\left[ \frac{\Delta_n}{c(\Delta_n)}\DD X\tot1 \DD X^{(2)} \Bigg| \mathscr H \right], \\
G_n^{(2)}&= \si\E\left[ \frac{\Delta_n}{c(\Delta_n)}\DD X\tot1 \DD X^{(2)} \Bigg| \mathscr H \right]- \int_0^1\sigma\tot1_l\sigma\tot2_l\rho_l\,dl.
\end{split}
\label{decompose2}
\end{align}
Proposition \eqref{newconvergenceprop} easily extends to this situation and we can show that $G_n^{(2)}\stackrel{\text{a.s.}}{\to} 0$, as $n \to \infty$:

\begin{proposition}
Let $\sigma\tot1,\sigma\tot2,\rho$ have H\"older continuous sample paths, almost surely.
Let $\pi_n$ be a sequence of probability measures converging weakly to the probability $\pi$.
Then, the almost sure convergence of the Riemann sums
\[
f_n(\omega,x):=\si\rho_{i\Delta_n-x}\sigma\tot1_{i\Delta_n-x}\sigma\tot2_{i\Delta_n-x} \to \int_{-x}^{1-x} \sigma\tot1_l\sigma\tot2_l\rho_l\,dl=:f(\omega,x)
\]
implies the convergence of the integrals:
\[
\pi_n(f_n)\to\pi(f).
\]
In particular, we have the almost sure convergence:
\[
\si\E\left[ \frac{\Delta_n}{c(\Delta_n)}\DD X\tot1 \DD X^{(2)} \Bigg| \mathscr H \right] \stackrel{a.s.}{\to}\int_{\R^+} \left(\int_{-x}^{1-x}\sigma\tot1_l\sigma\tot2_l\rho_l\,dl\right) \, d\pi(x),\quad \text{ as } n \to \infty.
\]
\end{proposition}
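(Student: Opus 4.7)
The plan is to mirror the strategy used for Proposition \ref{newconvergenceprop}, treating the factor $\sigma^{(1)}\sigma^{(2)}\rho$ uniformly as a single H\"older continuous integrand. My approach is to split
\[
\left|\pi_n(f_n)-\pi(f)\right| \leq \left|\pi_n(f_n)-\pi_n(f)\right| + \left|\pi_n(f)-\pi(f)\right|,
\]
and to control the two pieces separately: the first by a uniform-on-compacts Riemann-sum estimate coming from the H\"older assumption, and the second by the weak convergence $\pi_n \Rightarrow \pi$ together with a tightness/truncation argument to cope with the unboundedness of the domain.

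For the first term, I would observe that because $\sigma^{(1)}, \sigma^{(2)}, \rho$ are, path by path, H\"older continuous and locally bounded, the product $\sigma^{(1)}_\cdot\sigma^{(2)}_\cdot\rho_\cdot$ is, on any compact $[-R,1]\subset\R$, H\"older of some exponent $\tilde\alpha>0$ with a random constant $K(\omega,R)$. Choosing $l_i^*\in[-x+(i-1)\Delta_n,-x+i\Delta_n]$ and running exactly the computation from the proof of Proposition \ref{newconvergenceprop} gives, uniformly in $x\in[0,R]$,
\[
\left|f_n(\omega,x)-f(\omega,x)\right|
\leq \sum_{i=1}^n\int_{-x+(i-1)\Delta_n}^{-x+i\Delta_n}\left|\sigma^{(1)}_l\sigma^{(2)}_l\rho_l-\sigma^{(1)}_{l_i^*}\sigma^{(2)}_{l_i^*}\rho_{l_i^*}\right|dl
\leq K(\omega,R)\,n\,\Delta_n^{\tilde\alpha+1}=K(\omega,R)\,\Delta_n^{\tilde\alpha}.
\]

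For the second term I would use that the weakly convergent sequence $\{\pi_n\}\cup\{\pi\}$ is tight, so for any $\epsilon>0$ there is $R=R(\epsilon)$ with $\sup_n\pi_n([R,\infty))+\pi([R,\infty))<\epsilon$. On $[0,R]$ the function $f(\omega,\cdot)$ is continuous (the integrand is continuous and the interval of integration moves continuously in $x$) and bounded by $M(\omega,R):=\sup_{l\in[-R,1]}|\sigma^{(1)}_l\sigma^{(2)}_l|$. Applying weak convergence to any bounded continuous extension $\tilde f$ of $f|_{[0,R]}$ gives $\pi_n(\tilde f)\to\pi(\tilde f)$; combined with the uniform bound from Step 1 on $[0,R]$ and the $\epsilon$-control of the tails of $\pi_n, \pi$, one obtains the claim. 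The ``in particular'' statement then follows from applying the general result to the specific measures $\pi_n$ introduced before Proposition \ref{newconvergenceprop}, together with the identification $\pi=\delta_0$ which was established in Proposition \ref{newconvergenceprop2}.

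The main obstacle is the absence of an a priori global bound on $f(\omega,x)$: since $\sigma^{(1)},\sigma^{(2)}$ are only assumed H\"older continuous and thus only locally bounded, one cannot simply dominate $|f(\omega,\cdot)|$ by a fixed constant on all of $\R^+$. The truncation by a compact $[0,R]$ chosen via tightness is what circumvents this, and verifying that the boundary contribution really is negligible (rather than merely small relative to some quantity that blows up with $R$) is the one delicate point. In the specific application this is harmless because $\pi_n$ concentrates at $0$ for large $n$, but in the general statement of the proposition it is essential that the tightness of $\{\pi_n\}$ is used in tandem with the pathwise local boundedness of $\sigma^{(1)}\sigma^{(2)}$.
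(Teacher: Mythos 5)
Your high-level decomposition is the same as the paper's (a Riemann-sum estimate for $\pi_n(f_n)-\pi_n(f)$, then weak convergence for $\pi_n(f)-\pi(f)$), but the step you yourself flag as delicate — the tail contribution — is a genuine gap, and the mechanism you propose to close it does not work. Tightness of $\{\pi_n\}$ together with pathwise \emph{local} boundedness of $\sigma^{(1)}\sigma^{(2)}\rho$ is not enough to control $\int_{[R,\infty)}|f_n-f|\,d\pi_n$ or $\pi_n(f)-\pi(f)$ when $f$ is unbounded: take for instance $\sigma^{(1)}_l=\sigma^{(2)}_l=|l|$, $\rho\equiv 1$ and $\pi_n=(1-n^{-1})\delta_0+n^{-1}\delta_n$; then $\pi_n\Rightarrow\delta_0$ and the family is tight, yet $\pi_n(f)\approx n\to\infty$ while $\pi(f)=f(0)$. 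So small tail mass is useless unless it is played against a quantitative bound on the growth of $f_n,f$ there, i.e.\ some uniform integrability; your Step 1 bound, being valid only for $x\in[0,R]$ with a constant $K(\omega,R)$ that blows up with $R$, cannot supply this. Even in the concrete application, ``$\pi_n$ concentrates at $0$'' does not by itself kill the tail term; one needs the explicit structure of the density of $\pi_n$ and an integrability condition on $\sigma$.

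The paper's proof avoids compact truncation altogether. It expands the increment of the product so that the H\"older estimate holds globally in $x$ with explicit coefficients, namely $|f_n(x)-f(x)|\leq \Delta_n^{\alpha}\bigl(\int_{-x}^{1-x}(|\sigma^{(1)}_l\sigma^{(2)}_l|+|\sigma^{(1)}_l|)\,dl+\Delta_n\sum_{i=1}^n|\sigma^{(2)}_{l_i^*}|\bigr)$, and hence $|\pi_n(f_n)-\pi_n(f)|\leq \Delta_n^{\alpha}\bigl(\pi_n(g)+\pi_n(h_n)\bigr)$, where $g(x)=\int_{-x}^{1-x}(|\sigma^{(1)}_l\sigma^{(2)}_l|+|\sigma^{(1)}_l|)\,dl$ is continuous and tends to zero at infinity (hence a legitimate bounded test function for weak convergence) and $\pi_n(h_n)$ converges by applying the previous proposition (Proposition \ref{newconvergenceprop}) with $|\sigma^{(2)}|$ in place of $\rho$. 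Both $\pi_n$-integrals therefore converge to finite limits, and the prefactor $\Delta_n^{\alpha}\to 0$ makes the whole error vanish. If you want to salvage your truncation route, you must add and verify a uniform-integrability-type condition on $f_n,f$ with respect to $\{\pi_n\}$ (in the application this comes from the explicit density $\phi^{(1)}_{\Delta_n}\phi^{(2)}_{\Delta_n}/c(\Delta_n)$ and the integrability assumptions on $\sigma^{(1)},\sigma^{(2)}$), rather than appeal to tightness and local boundedness alone.
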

 
\begin{proof}
Let  $\alpha_\rho, \alpha_{\sigma\tot1},\alpha_{\sigma\tot2}$ denote the H\"older coefficients of $\rho, \sigma^{(1)}, \sigma^{(2)}$ and set  $\alpha:=\min\{\alpha_\rho,\alpha_{\sigma\tot1},\alpha_{\sigma\tot2}\}$, and $|u-t|<1$. Then 
\[
\begin{split}
\left|\sigma_u\tot1\sigma\tot2_u\rho_u-\sigma\tot1_t\sigma_t\tot2\rho_t\right|&\leq\left|\sigma\tot1_u \sigma\tot2_u (\rho_u-\rho_t) + \rho_t (\sigma\tot1_u\sigma\tot2_u-\sigma\tot1_t\sigma\tot2_t)\right|\\
&\leq \left|\sigma\tot1_u \sigma\tot2_u (\rho_u-\rho_t)+\rho_t \left(\sigma\tot1_u\left(\sigma\tot2_u-\sigma\tot2_t+\sigma\tot2_t\right)-\sigma\tot1_t\sigma\tot2_t\right)\right|\\
&\leq\left|u-t\right|^{\alpha_\rho}|\sigma\tot1_u\sigma\tot2_u|+|\sigma\tot1_u||u-t|^{\alpha_{\sigma\tot2}}+|\sigma\tot2_t||u-t|^{\alpha_{\sigma\tot1}} \\
&\leq |u-t|^\alpha \left( |\sigma\tot1_u\sigma\tot2_u|+|\sigma\tot1_u|+|\sigma\tot2_t|\right).
\end{split}
\]
The error we make with a Riemann sum becomes:
\[
\begin{split}
&\abs{f_n(x)-f(x)}=\left|\Delta_n\si\rho_{i\Delta_n-x}\sigma\tot1_{i\Delta_n-x}\sigma\tot2_{i\Delta_n-x}-\int_{-x}^{1-x}\rho_l\sigma\tot1_l\sigma\tot2_l\,dl\right|\\
&\leq\si \int_{-x+(i-1)\Delta_n}^{-x+i\Delta_n} \abs{\sigma\tot1_l\sigma\tot2_l\rho_l - \sigma\tot1_{l_i^*}\sigma\tot2_{l_i^*}\rho_{l_i^*}}\,dl \\
&\leq \si \int_{-x+(i-1)\Delta_n}^{-x+i\Delta_n}  \abs{l-l_i^*}^{\alpha}\left( |\sigma\tot1_l\sigma\tot2_l|+|\sigma\tot1_l|+|\sigma\tot2_{l_i^*}|\right)\,dl \\
&\leq\Delta_n^\alpha \left(\int_{-x}^{1-x} \left(|\sigma\tot1_l\sigma\tot2_l|+|\sigma\tot1_l|\right)\,dl + \Delta_n \si |\sigma\tot2_{l_i^*}| \right).
\end{split}
\]
Now we can finally prove the bound for the integrals:
\[
\begin{split}
|\pi_n(f_n)-\pi_n(f)|&\leq \Delta_n^\alpha \pi_n\left(\int_{-x}^{1-x} \left(|\sigma\tot1_l\sigma\tot2_l|+|\sigma\tot1_l|\right)\,dl+
\Delta_n \si |\sigma\tot2_{l_i^*}|\right)\\
&=\Delta_n^\alpha\pi_n\left(\int_{-x}^{1-x} \left(|\sigma\tot1_l\sigma\tot2_l|+|\sigma\tot1_l|\right)\,dl\right)+\Delta_n^\alpha\pi_n\left(\Delta_n \si |\sigma\tot2_{l_i^*}|\right).
\end{split}
\]
The first term converges to $\pi\left(\int_{-x}^{1-x}  |\sigma\tot1_l\sigma\tot2_l|+|\sigma\tot1_l|\,dl\right)$, since the function $x\to \int_{-x}^{1-x}  |\sigma\tot1_l\sigma\tot2_l|+|\sigma\tot1_l|\,dl$ is continuous and tends to zero for $x\to \infty$.
Since the paths of $\sigma\tot2$ are almost surely H\"older continuous, Proposition \ref{newconvergenceprop} applies, and we can conclude that:
\[
\pi_n\left(\Delta_n \si |\sigma\tot2_{l_i^*}|\right)\to \pi\left(\int_{-x}^{1-x}|\sigma\tot2_l|\,dl\right);
\]
henceforth:
\[
|\pi_n(f_n)-\pi_n(f)|\to 0 \times \pi\left(\int_{-x}^{1-x} |\sigma\tot1_l\sigma\tot2_l|+|\sigma\tot1_l|+|\sigma\tot2_l|\,dl\right)=0.
\]
\end{proof}
Analogously to Proposition \ref{summingup}, we  need to show that:
\[
\Var\left(\frac{\Delta_n}{c(\Delta_n)}\si\DD X\tot1 \DD X^{(2)} \Bigg| \mathscr H \right)\to 0, \quad \text{ as } n \to \infty,
\]
which implies that $G_n^{(1)}\stackrel{L^2}{\to} 0$ as $n\to \infty$.
\begin{proposition}
Suppose that the assumption of Theorem  \ref{FullStory} hold. Then 
\[
\Var\left(\frac{\Delta_n}{c(\Delta_n)}\si\DD X\tot1 \DD X^{(2)} \Bigg| \mathscr H \right)\to 0, \quad \text{ as } n \to \infty,
\]
\end{proposition}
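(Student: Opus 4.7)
The plan is to mirror the proof of Proposition \ref{summingup}, exploiting the fact that, by Assumption \ref{strangevolassumption}, conditional on $\mathscr H=\mathscr F^{\rho,\sigma\tot1,\sigma\tot2}$ the vector $(X\tot1,X\tot2)$ is a centred Gaussian process with $\mathscr H$-measurable integrands $g\tot j(\cdot)\sigma\tot j_{\cdot}$. Since $\sigma\tot1$ and $\sigma\tot2$ have H\"older continuous sample paths on the compact interval $[0,T]$, there is a finite random constant $M=M(\omega)$ with $\abs{\sigma\tot j_s}\leq M$ for all $s\in[0,T]$, $j=1,2$. As a first step I would split
\begin{align*}
\Var\!\left(\frac{\Delta_n}{c(\Delta_n)}\si\DD X\tot1 \DD X\tot2 \,\Big|\, \mathscr H \right)
&= \frac{\Delta_n^2}{c^2(\Delta_n)} \si \Var\!\left(\DD X\tot1\DD X\tot2 \,\big|\, \mathscr H\right) \\
&\quad + \frac{2\Delta_n^2}{c^2(\Delta_n)}\si \sum_{j=i+1}^{n} \Cov\!\left(\DD X\tot1\DD X\tot2,\, \Delta_j^n X\tot1 \Delta_j^n X\tot2 \,\big|\,\mathscr H\right),
\end{align*}
and then handle the two contributions separately.

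For the diagonal contribution, conditional Gaussianity combined with Wick's formula yields $\E[(\DD X\tot1\DD X\tot2)^2|\mathscr H]\leq K M^4 c^2(\Delta_n)$ almost surely, where $K$ is the same constant that arises in Proposition \ref{summingup}: the computation is identical, with $g\tot j(\cdot)\sigma\tot j_{\cdot}$ in place of $g\tot j(\cdot)$ and with $\abs{\sigma\tot j}\leq M$ used to extract the factor $M^4$. Multiplying by $n\Delta_n^2/c^2(\Delta_n)$, this contribution is dominated almost surely by $K M^4 n\Delta_n^2\to 0$.

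For the off-diagonal sum, Wick's formula (Lemma \ref{importantlemma}) applied conditionally on $\mathscr H$ gives
\begin{align*}
&\Cov\!\left(\DD X\tot1\DD X\tot2,\, \Delta_j^n X\tot1 \Delta_j^n X\tot2 \,\big|\,\mathscr H\right)\\
&\quad = \E\!\left[\DD X\tot1 \Delta_j^n X\tot1 \big| \mathscr H\right]\E\!\left[\DD X\tot2 \Delta_j^n X\tot2 \big| \mathscr H\right]\\
&\qquad + \E\!\left[\DD X\tot1 \Delta_j^n X\tot2 \big| \mathscr H\right]\E\!\left[\DD X\tot2 \Delta_j^n X\tot1 \big| \mathscr H\right].
\end{align*}
Each factor is dominated in absolute value by $M^2(\omega)\,r^{(n)}_{a,b}(j-i)$, with $r^{(n)}_{a,b}(k)$ the Gaussian covariance quantity already appearing in the proof of Proposition \ref{summingup}. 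From this point the argument carries over verbatim: Theorem 4.3 of \cite{GranelliVeraart2017b} supplies the uniform regular-variation bound on $r^{(n)}_{a,b}(j-i)/c(\Delta_n)$, Ces\`aro's theorem absorbs the double sum, and the condition $\epsilon<1-\delta\tot1-\delta\tot2$ forces the off-diagonal contribution to vanish almost surely, up to the a.s.~finite multiplicative factor $M^4(\omega)$.

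The main obstacle is the justification of Wick's formula after conditioning on $\mathscr H$; this is precisely what Assumption \ref{strangevolassumption} delivers, since the independence of $\sigma(\sigma\tot1,\sigma\tot2)$ from $\sigma(W\tot1,\tilde W)$ guarantees that the Wiener integrals defining the $\DD X\tot j$'s remain centred Gaussian conditional on $\mathscr H$, with the $\mathscr H$-measurable integrands $g\tot j(\cdot)\sigma\tot j_\cdot$ playing the role of deterministic kernels. Once this is in place, all the bookkeeping of Proposition \ref{summingup} transfers to the volatility setting by simply inserting the a.s.~finite factor $M^4(\omega)$ into the bounds, and convergence in probability of the conditional variance to zero follows, which together with the preceding proposition yields the u.c.p.~conclusion of Theorem \ref{FullStory}.
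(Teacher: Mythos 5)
Your overall strategy is the paper's: condition on $\mathscr H$, use the independence part of Assumption \ref{strangevolassumption} to apply the Gaussian/Wick computations of Proposition \ref{summingup} with $g\tot j(\cdot)\sigma\tot j_\cdot$ as frozen integrands, split into diagonal and off-diagonal terms, and finish with the bound from Theorem 4.3 of \cite{GranelliVeraart2017b} and Ces\`aro. However, there is a genuine gap in how you control the volatility: you bound $\abs{\sigma\tot j_s}\leq M(\omega)$ only for $s\in[0,T]$, but the increments
\[
\DD X\tot j=\int_{-\infty}^{(i-1)\Delta_n}\bigl(g\tot j(i\Delta_n-s)-g\tot j((i-1)\Delta_n-s)\bigr)\sigma\tot j_s\,dW\tot j_s+\DDint g\tot j(i\Delta_n-s)\sigma\tot j_s\,dW\tot j_s
\]
integrate $\sigma\tot j$ over the whole infinite past $(-\infty,i\Delta_n]$, where a c\`adl\`ag (or H\"older) path need not be bounded. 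Consequently the key estimates you assert, namely $\E[(\DD X\tot1\DD X\tot2)^2\,|\,\mathscr H]\leq KM^4c^2(\Delta_n)$ and $\bigl|\E[\DD X^{(a)}\Delta_j^nX^{(b)}\,|\,\mathscr H]\bigr|\leq M^2\,r^{(n)}_{a,b}(j-i)$, do not follow from the $[0,T]$ bound alone: the conditional variances and covariances contain terms such as $\int_{b\tot j}^\infty\bigl(g\tot j(s+\Delta_n)-g\tot j(s)\bigr)^2\bigl(\sigma\tot j_{(i-1)\Delta_n-s}\bigr)^2\,ds$, in which the volatility is evaluated at times tending to $-\infty$ and cannot simply be absorbed into $M^4(\omega)$.

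This is precisely where the integrability condition of Assumption \ref{strangevolassumption} (which you invoke only for its independence statement) is actually needed. The paper's argument splits each kernel integral at $b\tot j$: on the bounded range the volatility is evaluated at times in a compact set, so the c\`adl\`ag property gives a uniform bound and the bookkeeping of Proposition \ref{summingup} goes through as you describe; on the unbounded range one uses the mean value theorem together with the monotonicity of $\bigl((g\tot j)'\bigr)^2$ to get
\[
\int_{b\tot j}^\infty\bigl(g\tot j(s+\Delta_n)-g\tot j(s)\bigr)^2\bigl(\sigma\tot j_{(i-1)\Delta_n-s}\bigr)^2\,ds\leq\Delta_n^2\int_{b\tot j}^\infty\Bigl(\tfrac{d}{ds}g\tot j(s)\Bigr)^2\bigl(\sigma\tot j_{(i-1)\Delta_n-s}\bigr)^2\,ds=O(\Delta_n^2),
\]
the finiteness of the last integral being exactly what Assumption \ref{strangevolassumption} guarantees. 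Without this tail estimate your claim that ``all the bookkeeping of Proposition \ref{summingup} transfers by inserting the factor $M^4(\omega)$'' fails; with it, your diagonal and off-diagonal treatment becomes the paper's proof.
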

\begin{proof}[Sketch of Proof]
We will only sketch the proof, which follows the same lines as the proof of Proposition \ref{summingup}. The basic idea is the following: Since the volatility process is c\`adl\`ag, it is bounded on compact intervals, while unbounded intervals are controlled via Assumption \ref{strangevolassumption}. 

All the terms concerning intervals on the sets of the form $[(i-1)\Delta_n,i\Delta_n]$ of the proof of Proposition \ref{summingup} are treated exactly in the same way, since we can  uniformly bound $\sigma$ by its maximum on $[0,1]$. When $\sigma$ appears integrated on an interval of infinite length, bounded away from 0, we can write for $j\in \{1, 2\}$:
\begin{align}
&\int_{b\tot j}^{\infty}\left(g\tot j(s+\Delta_n)-g\tot j (s)\right)^2\left(\sigma\tot j_{(i-1)\Delta_n-s}\right)^2\,ds \leq\\
& \Delta_n^2\int_{b\tot j}^\infty \left(\frac{d}{ds}g\tot j(s)\right)^2\left(\sigma\tot j_{(i-1)\Delta_n-s}\label{lnsctn}\right)^2\,ds=O(\Delta_n^2),
\end{align}
since the integral in \eqref{lnsctn} is finite, thanks to Assumption \ref{strangevolassumption}.

\end{proof}

\begin{appendix}
\section{Background results}\label{A}
\subsection{\emph{U.c.p.} convergence of processes}\label{Sectucp}
We briefly recall the mode of convergence that we work with throughout this article: \emph{uniform convergence on compacts  in probability}, or \emph{u.c.p.} for short.

\begin{definition}[$u.c.p.$ convergence]
\label{ucpdef}
The sequence of \cadlag processes $X\tot n$ is said to converge \emph{uniformly on compacts in probability} to the \cadlag process $X$  if, for all $t >0$ and all $\epsilon >0$:
\[
\lim_{n\to \infty} \Prob\left(  \sup_{s<t}\left| X_s\tot n- X_s\right| > \epsilon\right)= 0
\]
\end{definition}
\begin{remark}
Note that the assumption that the processes are \cadlag is sufficient to ensure that the supremum is $\Prob$-measurable.
\end{remark}

In some cases, pointwise convergence in probability is enough to prove u.c.p.~convergence of processes as can be seen in the following theorem.
\begin{theorem}[Sufficient condition for u.c.p.~convergence]
\label{ucpsuff}
Let the assumption and notation of Definition \ref{ucpdef} prevail.
Suppose that, for all $t$ in a dense subset $D\subset \R^+$ we have $X \tot n _t\overset{\Prob}{\rightarrow}X_t$. 
Assume further, that the paths of $X\tot n$ are increasing with time and the paths of $X$ are continuous, almost surely.
Then, the (stronger) convergence
$X\tot n_\cdot \overset{\text{u.c.p.}}{\rightarrow} X_\cdot$
holds.
\end{theorem}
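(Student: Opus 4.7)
The plan is to run a Dini-type argument: pointwise convergence on a dense set upgrades to uniform convergence on compacts once one exploits monotonicity of the approximating sequence and continuity of the limit. Fix $t>0$, $\epsilon>0$ and $\eta>0$; the target is to show $\Prob(\sup_{s\le t}|X\tot n_s-X_s|>\epsilon)<\eta$ for all $n$ large enough.

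First I would use the almost sure continuity of $X$ on the compact interval $[0,t]$ to deduce almost sure uniform continuity, so that the modulus of continuity
\[
m_X(\delta):=\sup_{\substack{u,v\in[0,t]\\|u-v|\le\delta}}|X_u-X_v|
\]
tends to $0$ almost surely, hence in probability, as $\delta\downarrow 0$. I would pick $\delta>0$ small enough that $\Prob(m_X(\delta)>\epsilon/3)<\eta/2$. Density of $D$ then allows me to choose a finite grid $0=t_0<t_1<\dots<t_k=t$ with $t_1,\dots,t_{k-1}\in D$ and $\max_j(t_j-t_{j-1})<\delta$. The boundary points $0$ and $t$ are either taken inside $D$ or, as in the application to realised covariation in this paper, are trivially convergent because $X\tot n_0=X_0=0$; either way I obtain a finite set of instants at which pointwise convergence in probability is available.

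The key step is the monotonicity sandwich. For $s\in[t_{j-1},t_j]$ the assumption that the paths of $X\tot n$ are non-decreasing yields
\[
\bigl(X\tot n_{t_{j-1}}-X_{t_{j-1}}\bigr)+\bigl(X_{t_{j-1}}-X_s\bigr)\le X\tot n_s-X_s\le \bigl(X\tot n_{t_j}-X_{t_j}\bigr)+\bigl(X_{t_j}-X_s\bigr),
\]
so, taking absolute values, supremum over $s\in[0,t]$ and maximum over $j$,
\[
\sup_{s\in[0,t]}\bigl|X\tot n_s-X_s\bigr|\;\le\;\max_{0\le j\le k}\bigl|X\tot n_{t_j}-X_{t_j}\bigr|\;+\;m_X(\delta).
\]
Since the grid is finite, pointwise convergence at each $t_j$ yields $\Prob\bigl(\max_j|X\tot n_{t_j}-X_{t_j}|>\epsilon/3\bigr)<\eta/2$ for all $n\ge n_0$, and combining this with the bound on $m_X(\delta)$ gives the desired $\Prob(\sup_{s\le t}|X\tot n_s-X_s|>\epsilon)<\eta$.

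The main obstacle I expect is not technical difficulty but rather making sure the ingredients are correctly combined: one must be careful that the modulus of continuity of $X$ is measurable and tends to zero in probability (which follows from a.s. continuity on the \emph{compact} interval $[0,t]$), and one must handle the endpoints of the grid cleanly. A minor subtlety is that monotonicity gives only a two-sided sandwich at grid points and therefore the argument would fail if either the monotonicity of $X\tot n$ or the continuity of $X$ were dropped; this is precisely why the polarisation identity in Remark \ref{ucpremark} is needed in the paper to reduce to the case of increasing processes before invoking this theorem.
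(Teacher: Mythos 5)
Your Dini-type argument is correct and complete: the monotonicity sandwich over a finite grid of points from $D$, combined with the almost sure modulus of continuity of $X$ on the compact interval, is exactly the standard proof of this result, which the paper states as a background fact without supplying a proof. The only point requiring care is the left endpoint $s=0$ when $0\notin D$ (monotonicity alone gives no lower bound there), which you correctly flag and which is harmless in the paper's applications since $X^{(n)}_0=X_0=0$.
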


\subsection{Expectation of the product of four stochastic integrals}
\begin{lemma}\label{importantlemma}
Let $W$ denote a standard Brownian motion and let $H^{(i)}(t)$, for $i \in \{1,2,3,4\}$ be  deterministic functions such that $\int_0^T \left(H^{(i)}(t)\right)^2\,dt <\infty$.    Then, one has:
\begin{multline}
\label{important}
\Et\int_0^T H^{(1)}_s\,dW_s\int_0^T H^{(2)}_s\,dW_s \int_0^T H^{(3)}_s\,dW_s \int_0^T H^{(4)}_s\,dW_s\right]\\
=\int_0^T H^{(1)}_sH^{(3)}_s\,ds\int_0^T H^{(2)}_sH^{(4)}_s\,ds+ \int_0^T H^{(1)}_sH^{(2)}_s\,ds\int_0^T H^{(3)}_sH^{(4)}_s\,ds
\\
+\int_0^T H^{(1)}_sH^{(4)}_s\,ds\int_0^T H^{(2)}_sH^{(3)}_s\,ds.
\end{multline}
\label{finallyaproof}
\end{lemma}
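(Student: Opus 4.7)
The plan is to reduce the identity to a well-known algebraic fact about centred Gaussian vectors. Set $X_i := \int_0^T H^{(i)}_s\,dW_s$ for $i\in\{1,2,3,4\}$. Because each $H^{(i)}$ is deterministic and square-integrable, each $X_i$ is a centred Gaussian random variable; more importantly, any linear combination $\sum_i a_i X_i = \int_0^T \bigl(\sum_i a_i H^{(i)}_s\bigr)\,dW_s$ is again a centred Gaussian, so the vector $(X_1,X_2,X_3,X_4)$ is jointly centred Gaussian.

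The first step will be to record the covariance structure via It\^o's isometry:
\begin{equation*}
\E[X_i X_j] \;=\; \int_0^T H^{(i)}_s H^{(j)}_s\,ds, \qquad i,j\in\{1,2,3,4\}.
\end{equation*}
The second step is to invoke Isserlis'/Wick's formula for the fourth mixed moment of a centred Gaussian vector:
\begin{equation*}
\E[X_1X_2X_3X_4] \;=\; \E[X_1X_2]\E[X_3X_4] + \E[X_1X_3]\E[X_2X_4] + \E[X_1X_4]\E[X_2X_3].
\end{equation*}
Substituting the covariance expressions from the first step yields the three products of integrals on the right-hand side of \eqref{important}, which is exactly the claim.

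If one prefers to avoid quoting Wick's theorem as a black box, an alternative route is to compute $\E[X_1X_2X_3X_4]$ directly by repeated application of It\^o's product rule (equivalently, integration by parts). Writing $M^{(i)}_t=\int_0^t H^{(i)}_s\,dW_s$, successively expanding $M^{(1)}_T M^{(2)}_T$, $M^{(3)}_T M^{(4)}_T$ and taking expectations reduces the four-fold expectation to sums of products of quadratic covariations of the form $\langle M^{(i)}, M^{(j)}\rangle_T = \int_0^T H^{(i)}_s H^{(j)}_s\,ds$; the three pairings appearing in \eqref{important} correspond to the three ways of pairing $\{1,2,3,4\}$ into two unordered pairs. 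Either route gives the same formula, and neither step is genuinely hard: the only mild subtlety is to make sure one exploits that the integrands are \emph{deterministic}, which is what guarantees joint Gaussianity and lets the Isserlis identity apply. No step should present a real obstacle.
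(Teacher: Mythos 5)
Your proposal is correct, and it reaches the identity by a slightly different route than the paper. You justify joint Gaussianity of $(X_1,X_2,X_3,X_4)$ (correctly, via the fact that any linear combination is a single Wiener integral of a deterministic square-integrable function), compute the covariances by the It\^o isometry, and then cite Isserlis'/Wick's formula for the fourth mixed moment. The paper does not quote Wick's theorem as a black box: it derives the formula from scratch by writing the joint characteristic function of the linear combination $\sum_i \theta_i \int_0^T H^{(i)}_s\,dW_s$, which is Gaussian with explicit variance, and computing the mixed partial derivative $\partial^4/\partial\theta_1\partial\theta_2\partial\theta_3\partial\theta_4$ at $\boldsymbol\theta=0$. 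Mathematically this is the same mechanism by which Wick's theorem is usually proved, so the two arguments are close relatives: yours is shorter and cleaner provided the reader accepts Isserlis' identity, while the paper's is self-contained at the cost of a page of calculus. Your secondary route via It\^o's product rule and quadratic covariations would also work, though as written it is only a sketch; the pairing structure does emerge from the iterated integration by parts, but one must track the martingale cross terms carefully to see that only the three pair-products survive after taking expectations. No gap in the main argument.
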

\begin{proof}
We will prove the claim by differentiating the characteristic function. In details, we compute:
\begin{equation}
\label{characteristic1}
\frac{\partial^4}{\partial\theta_4\partial\theta_3\partial\theta_2\partial\theta_1}\Bigg|_{\boldsymbol\theta=0}\E\left[e^{i\theta_1 \int_0^T H^{(1)}_s\,dW_s+i\theta_2 \int_0^T H^{(2)}_s\,dW_s +\theta_3 \int_0^T H^{(3)}_s\,dW_s+i\theta_4 \int_0^T H^{(4)}_s\,dW}\right],
\end{equation}
for $\boldsymbol \theta=(\theta_1,\theta_2,\theta_3,\theta_4)^\top.$
The random variable
\[
\theta_1 \int_0^T H\tot 1_s\,dW_s+\theta_2 \int_0^T H\tot 2_s\,dW_s+\theta_3 \int_0^T H\tot 3_s\,dW_s+\theta_4 \int_0^T H\tot 4_s\,dW_s
\]
follows a Gaussian distribution with mean zero and variance:
\[
\int_0^T \left(\theta_1 H\tot1_s+\theta_2 H\tot2_s+\theta_3 H\tot3_s+\theta_4 H\tot4_s\right)^2\,ds.
\]
Note that the variable has moments of all orders, hence our approach is justified.

The expectation in \eqref{characteristic1} equals:
\[\tilde H (\theta_1,\theta_2,\theta_3,\theta_4) :=
\exp\left(-\frac 12 \int_0^T \left(\theta_1 H\tot1_s+\theta_2 H\tot2_s+\theta_3 H\tot3_s+\theta_4 H\tot4_s\right)^2\right)\,ds.
\]
We proceed with differentiating the function $\tilde H$.
\[
\frac{\partial \tilde H}{\partial \theta_1}=- \tilde H  \int_0^T \left(\theta_1 H\tot1_s+\theta_2 H\tot2_s+\theta_3 H\tot3_s+\theta_4 H\tot4_s\right) H\tot1_s\,ds
\]
\[
\begin{split}
\frac{\partial^2 \tilde H}{\partial \theta_2\partial\theta_1}=-\tilde H \int_0^T H\tot1_sH\tot2_s\,ds+\tilde H \Bigg[&\int_0^T \left(\theta_1 H\tot1_s+\theta_2 H\tot2_s+\theta_3 H\tot3_s+\theta_4 H\tot4_s\right) H\tot1_s\,ds\times\\
&  \int_0^T \left(\theta_1 H\tot1_s+\theta_2 H\tot2_s+\theta_3 H\tot3_s+\theta_4 H\tot4_s\right) H\tot2_s\,ds \Bigg]
\end{split}
\]
\[
\begin{split}
\frac{\partial^3\tilde H}{\theta_3\theta_2\theta_1}&=\tilde H  \int_0^T H_s\tot1 H\tot 2_s\,ds \int_0^T \left(\theta_1 H\tot1_s+\theta_2 H\tot2_s+\theta_3 H\tot3_s+\theta_4 H\tot4_s\right) H\tot3_s\,ds\\
&+\tilde H  \Bigg[\int_0^T \left(\theta_1 H\tot1_s+\theta_2 H\tot2_s+\theta_3 H\tot3_s+\theta_4 H\tot4_s\right)H\tot1_s\,ds\\
&\times\int_0^T \left(\theta_1 H\tot1_s+\theta_2 H\tot2_s+\theta_3 H\tot3_s+\theta_4 H\tot4_s\right)H\tot2_s\,ds \\&\times \int_0^T \left(\theta_1 H\tot1_s+\theta_2 H\tot2_s+\theta_3 H\tot3_s+\theta_4 H\tot4_s\right)H\tot3_s\,ds\Bigg]\\
&+\tilde H  \Bigg[\int_0^TH\tot1_sH\tot3_s\,ds \int_0^T \left(\theta_1 H\tot1_s+\theta_2 H\tot2_s+\theta_3 H\tot3_s+\theta_4 H\tot4_s\right) H\tot2_s\,ds\\
&+ \int_0^TH\tot2_sH\tot3_s\,ds \int_0^T \left(\theta_1 H\tot1_s+\theta_2 H\tot2_s+\theta_3 H\tot3_s+\theta_4 H\tot4_s\right) H\tot1_s\,ds\Bigg].
\end{split}
\]
And finally, since $\tilde H(0,0,0,0)=1$ and $\frac{\partial \tilde H}{\partial \theta_i}\big|_{\boldsymbol \theta=0}=0$,
\begin{multline}
\frac{\partial^4 \tilde H}{\partial\theta_4\partial\theta_3\partial\theta_2\partial\theta_1}\Bigg|_{\boldsymbol\theta=0} =\int_0^T H\tot1_sH\tot2_s\,ds\int_0^T H\tot3_sH\tot4_s\,ds+\\
\int_0^T H\tot1_sH\tot3_s\,ds\int_0^T H\tot2_sH\tot4_s\,ds+\int_0^T H\tot2_sH\tot3_s\,ds\int_0^T H\tot1_sH\tot4_s\,ds,
\end{multline}
proving the claim.
\end{proof}
A particularly useful consequence of Lemma \ref{finallyaproof} is the following result:
\begin{corollary}
\label{corollary5}
Let $\left(K_t\right)_{t\geq 0}$ be an a.s. bounded stochastic process, then:
\begin{multline*}
\E\left[ \left( \DDint g^{(1)}(i\Delta_n-s)\,dW^{(1)}_s \DDint g^{(2)}(i\Delta_n-s)K_s\,dW^{(1)}_s\right)^2\Bigg|\mathscr F^K\right]\\
=  \DDint \left(g\tot1 (i\Delta_n-s)\right)^2\,ds\DDint \left(g\tot2(i\Delta_n-s)\right)^2K_s^2\,ds \\
+2\left(\DDint g\tot1 (i\Delta_n-s)g\tot2 (i\Delta_n-s) K_s\,ds\right)^2.
\end{multline*}
\end{corollary}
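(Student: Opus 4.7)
The plan is to reduce the statement to Lemma \ref{important} by a standard conditioning argument, exploiting that under the implicit independence between $K$ and $W^{(1)}$, the integrands become deterministic once we condition on the paths of $K$. First I would expand the square inside the conditional expectation as a product of four Wiener integrals against $W^{(1)}$ on the common interval $[(i-1)\Delta_n,i\Delta_n]$, namely
\[
\left(\DDint g^{(1)}(i\Delta_n-s)\,dW^{(1)}_s\right)^2\left(\DDint g^{(2)}(i\Delta_n-s)K_s\,dW^{(1)}_s\right)^2.
\]

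Next I would observe that conditionally on $\mathscr F^K$ the paths of $K$ are frozen, so that the four integrands are deterministic (and square integrable, since $K$ is a.s.\ bounded and $g^{(1)}, g^{(2)} \in L^2$), and the Wiener integrals against $W^{(1)}$ remain Gaussian. One can then invoke Lemma \ref{important} (applied pathwise in $\omega \in \mathscr F^K$) with the identifications
\[
H^{(1)}_s=H^{(3)}_s=g^{(1)}(i\Delta_n-s)\mathbf{1}_{[(i-1)\Delta_n,i\Delta_n]}(s),\quad H^{(2)}_s=H^{(4)}_s=g^{(2)}(i\Delta_n-s)K_s\mathbf{1}_{[(i-1)\Delta_n,i\Delta_n]}(s).
\]
Substituting these into the three-term Isserlis formula of Lemma \ref{important} yields
\[
\DDint (g^{(1)})^2(i\Delta_n-s)\,ds\DDint (g^{(2)})^2(i\Delta_n-s)K_s^2\,ds+2\left(\DDint g^{(1)}(i\Delta_n-s)g^{(2)}(i\Delta_n-s)K_s\,ds\right)^2,
\]
which is exactly the right-hand side (the two ``cross'' terms in Lemma \ref{important} coincide here because $H^{(1)}=H^{(3)}$ and $H^{(2)}=H^{(4)}$, producing the factor of $2$).

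The only genuinely delicate point is the justification of applying Lemma \ref{important} conditionally on $\mathscr F^K$: this requires that $K$ be independent of $W^{(1)}$, which in the intended applications (e.g.\ Lemma \ref{lemmata2}, where $K_s=\rho_s-\rho_{(i-1)\Delta_n}$ with $\rho$ independent of $\mathscr F_t$) holds by assumption. Beyond that, everything reduces to pathwise substitution and algebra, and no further estimates are needed.
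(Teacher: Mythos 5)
Your proposal is correct and matches the paper's intended derivation: the paper presents Corollary \ref{corollary5} as an immediate consequence of Lemma \ref{importantlemma} without writing out the details, and the argument is exactly your conditioning on $\mathscr F^K$ (using the independence of $K$ from $W^{(1)}$, which holds in every application since $K$ is built from $\rho$ or from $\rho$ and the volatilities) followed by substitution of $H^{(1)}=H^{(3)}=g^{(1)}(i\Delta_n-\cdot)$ and $H^{(2)}=H^{(4)}=g^{(2)}(i\Delta_n-\cdot)K_\cdot$ into the three-term formula, with the two cross terms collapsing into the factor of $2$.
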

\end{appendix}

\section*{Acknowledgement}
We wish to thank Damiano Brigo, Dan Crisan, Mikko Pakkanen and  Mark Podolskij   for helpful discussions. 
AG is grateful to the Department of Mathematics of Imperial College for his PhD scholarship which supported this research. AEDV acknowledges
financial support by a Marie Curie FP7 Integration Grant within the 7th European Union Framework Programme (grant agreement number PCIG11-GA-2012-321707).


\end{document}